\numberwithin{equation}{section}
\newcommand{\cS}{\mathcal S}
\newcommand{\calR}{{\mathcal R}}
\newcommand{\cE}{{\mathcal E}}
\newcommand{\calD}{\mathcal D}
\newcommand{\calL}{{\mathcal L}}
\newcommand{\cU}{{\mathcal U}}
\newcommand{\cZ}{{\mathcal Z}}
\newcommand{\cO}{\mathcal O}
\newcommand{\cK}{\mathcal K}
\newcommand{\cM}{\mathcal M}
\newcommand{\cQ}{\mathcal Q}
\newcommand{\cB}{\mathcal B}
\newcommand{\cC}{\mathcal C}
\newcommand{\cF}{\mathcal F}
\newcommand{\bbC}{{\mathbb C}}
\newcommand{\bbN}{{\mathbb N}}
\newcommand{\bbU}{{\mathbb U}}
\newcommand{\bbZ}{{\mathbb Z}}
\newcommand{\bbQ}{\mathbb Q}
\newcommand{\sfD}{\mathsf D}
\newcommand{\sfX}{\mathsf X}
\newcommand{\sfY}{\mathsf Y}
\newcommand{\Ext}{{\rm Ext}}
\DeclareMathOperator{\SL}{SL}
\newcommand{\e}{\varepsilon}
\newcommand{\cstar}{\ensuremath{\mathrm{C}^*}}
\DeclareMathOperator{\id}{id}
\DeclareMathOperator{\Sep}{Sep} % Separable subalgebras
\newcommand{\oneHS}[1]{\|#1\|_{\textrm{HS},1}}
\newcommand{\kHS}[1]{\|#1\|_{\textrm{HS},K}}
\newtheorem{theorem}{Theorem}[section]
\newtheorem*{theorem*}{Theorem}
\newtheorem{proposition}[theorem]{Proposition}
\newtheorem*{proposition*}{Proposition}
\newtheorem{lemma}[theorem]{Lemma}
\newtheorem*{lemma*}{Lemma}
\newtheorem{corollary}[theorem]{Corollary}
\newtheorem*{corollary*}{Corollar}
\theoremstyle{definition}
\newtheorem{definition}[theorem]{Definition}
\newtheorem*{definition*}{Definition}
\newtheorem*{conjecture*}{Conjecture}
\newtheorem*{notation}{Notation}
\newtheorem{theoremi}{Theorem}
\theoremstyle{remark}
\newtheorem{example}[theorem]{Example}
\newtheorem*{example*}{Example}
\newtheorem{remark}[theorem]{Remark}
\newtheorem*{remark*}{Remark}
\newtheorem*{note*}{Note}
\newtheorem*{question*}{Question}
\author{Ilijas Farah}
\subjclass{22D55, % Kazhdan's property (T)
46L05, % General theory of C*-algebras
46L35} % Classifications of $C^*$-algebras
\address{Department of Mathematics and Statistics, York University, 4700 Keele \linebreak \phantom{---} Street, Toronto, Ontario, Canada, M3J 1P3} 
\address{Matemati\v cki Institut SANU, Kneza Mihaila 36, 11\,000 Beograd, p.p. 367,\linebreak\phantom{---} Serbia}
\email{ifarah@yorku.ca}
\urladdr{https://ifarah.mathstats.yorku.ca/}
\urladdr{https://orcid.org/0000-0001-7703-6931 (ORCID iD)}
\author{G{\'a}bor Szab{\'o}}
\address{Department of Mathematics, KU Leuven, Celestijnenlaan 200B, box 2400,\linebreak \phantom{---} 3001 Leuven, Belgium}
\email{gabor.szabo@kuleuven.be}
\thanks{I.F. partially supported by NSERC}
\thanks{G.S. funded by the European Union.
Views and opinions expressed are those of the authors only and do not necessarily reflect those of the European Union or the European Research Council.
Neither the EU nor the ERC can be held responsible for them.}
\title[Coronas \& strongly self-absorbing \cstar-algebras]{Coronas and strongly self-absorbing \cstar-algebras}
\begin{document}
\maketitle
\begin{abstract}
Let $\calD$ be a strongly self-absorbing \cstar-algebra.
Given any separable \cstar-algebra $A$, our two main results assert the following.
If $A$ is $\calD$-stable and non-unital, then the corona algebra of $A$ is $\calD$-saturated, i.e., $\calD$ embeds unitally into the relative commutant of every separable \cstar-subalgebra. 
Conversely, assuming that the stable corona of $A$ is separably $\calD$-stable, we prove that $A$ is $\calD$-stable.
This generalizes recent work by the first-named author on the structure of the Calkin algebra.
As an immediate corollary, it follows that the multiplier algebra of a separable $\calD$-stable \cstar-algebra is separably $\calD$-stable.
Appropriate versions of the aforementioned results are also obtained when $A$ is not necessarily separable.
The article ends with some non-trivial applications.
\end{abstract}

\setcounter{tocdepth}{1}
\tableofcontents

\section*{Introduction}

Within the structure and classification theory of \cstar-algebras, the so-called strongly self-absorbing \cstar-algebras have historically emerged as important cornerstone objects.
This was initially the case by example:
The special Cuntz algebras $\mathcal O_\infty$ and $\mathcal O_2$ were shown by Kirchberg--Phillips \cite{KirchbergPhillips00} to be, in an appropriate sense, the initial and final object (respectively) in the category of separable simple unital nuclear purely infinite \cstar-algebras (the so-called \emph{Kirchberg algebras}).
Although it was not originally phrased in that (not yet existing) framework at the time, Phillips' proof \cite{Phillips00} of the Kirchberg--Phillips classification theorem \cite{Kirchberg95icm, KirchbergC} made crucial use of the properties of $\mathcal O_\infty$ that make it a strongly self-absorbing \cstar-algebra from today's point of view.
Towards the end of the 1990s, Jiang and Su \cite{JiangSu99} constructed the \cstar-algebra $\mathcal Z$ now named after them, intended as a stably finite analog of $\mathcal O_\infty$, and showed that it is strongly self-absorbing.

Finally, the behavior of these interesting examples was unified in the abstract theory of strongly self-absorbing \cstar-algebras (recalled in the preliminary section) in the mid 2000s, by Toms--Winter \cite{ToWi:Strongly} and Kirchberg \cite{Kirc:Central}.
Let $\calD$ be a strongly self-absorbing \cstar-algebra.
One of the most appealing aspects of the associated abstract theory is that given any separable \cstar-algebra~$A$, one can characterize the existence of an isomorphism $A\cong A\otimes\calD$ (this is called \emph{$\calD$-stability}) as an internal local approximation property of $A$ via a so-called McDuff-type criterion, inspired by a related result \cite{McDuff70} for von Neumann algebras that served as the original inspiration.
This has the advantage of making $\calD$-stability a reasonably verifiable condition in many contexts and moreover implies that $\calD$-stability is preserved under various elementary constructions.
Subsequently, strongly self-absorbing \cstar-algebras had been recognized both as interesting test cases for hard open problems (see \cite{Winter17abel, Schafhauser24}) and as objects in their respective categories playing a facilitating role for certain classification problems \cite{Winter14}.
Given that $\cZ$ has been shown in \cite{winter2011strongly} to be the initial object in the category of strongly self-absorbing \cstar-algebras, the property of tensorially absorbing $\cZ$ (called \emph{Jiang--Su stability}) has a special significance.
In the history of the subject, it has emerged as a well-studied property both in examples \cite{TomsWinter13, ElliottNiu17, Kerr20, KerrSzabo20, GardellaGeffenNaryshkinVaccaro24, Naryshkin24} and abstractly within the work on the Toms--Winter conjecture \cite{ElliottToms08, Winter10, Winter12, MatuiSato12, Tikuisis14, MatuiSato14uhf, BBSTWW, CETWW21, CastillejosEvington21} (the given references are not exhaustive).
Indeed, the most satisfactory state-of-the-art classification theorem in the context of the Elliott program concerns the category of separable simple nuclear Jiang--Su stable \cstar-algebras satisfying the universal coefficient theorem \cite{GongLinNiu20, GongLinNiu20-2}; see \cite{carrion2023class} and the more elaborate discussion therein about the history of classification.

The present work is a continuation and strengthening of the recent article~\cite{farah2023calkin} by the first named author, motivated by the Brown--Douglas--Fillmore question whether the Calkin algebra can have a K-theory reversing automorphism. In this work a new connection was discovered between strongly self-absorbing \cstar-algebras and the stucture of corona algebras.
Recall that for a \cstar-algebra $A$ with associated multiplier algebra $\cM(A)$, the quotient $\cQ(A):=\cM(A)/A$ is called the \emph{corona algebra} of $A$.
Before resuming the discussion, we recall a concept frequently occurring in this article:

\begin{definition*} %\label{Def.B-saturated}
Let $B$ and $C$ be unital \cstar-algebras with $B$ separable.
We say that $C$ is \emph{$B$-saturated} if for every separable \cstar-subalgebra $A\subseteq C$, there is a unital, injective $*$-homomorphism from $B$ to $C\cap A'$.
\end{definition*}

In simplified terms, the two main insights in \cite[Theorems B+C]{farah2023calkin} can be summarized by the following statements:
If $\calD$ is strongly self-absorbing and  a separable unital \cstar-algebra $A$ is $\calD$-stable, then $\cQ(\cK\otimes A)$ is $\calD$-saturated; the Calkin algebra $\cQ(H):=\cQ(\cK)$, however, is not $\calD$-saturated.
As an immediate consequence, it followed (among other things) that the Calkin algebra is not isomorphic to the corona $\cQ(\cK\otimes\mathcal O_\infty)$, which answered a previously open question.

In this article, we generalize both of these insights to the most general setting in which they are plausible.
In order to extend the theory beyond the separable case, we work with the concept of \emph{separable $\calD$-stability} from \cite{schafhauser2020new}.
A \cstar-algebra $A$ is called separably $\calD$-stable if every separable \cstar-subalgebra of $A$ is included in a separable $\calD$-stable \cstar-subalgebra of $A$.

Our first main result (proven in Section \ref{S.D-saturation} as Theorem~\ref{T.Q-saturated.1}) gives a generalization of \cite[Theorems C]{farah2023calkin} by removing a number of prior assumptions.

\begin{theoremi} \label{T.Q-saturated}
Suppose that $\calD$ is a strongly self-absorbing \cstar-algebra and that $A$ is a $\sigma$-unital \cstar-algebra that is separably $\calD$-stable.
Then $\cQ(A)$ is $\calD$-saturated. 
\end{theoremi}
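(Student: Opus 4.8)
The plan is to fix a separable \cstar-subalgebra $X\subseteq\cQ(A)$ and to manufacture a single unital $*$-homomorphism $\calD\to\cQ(A)\cap X'$; since $\calD$ is strongly self-absorbing, hence simple and unital, any unital $*$-homomorphism out of $\calD$ is automatically injective, so injectivity comes for free once the map is unital. Writing $\pi\colon\cM(A)\to\cQ(A)$ for the quotient map, I would lift a countable self-adjoint generating set of $X$ to a countable set $S\subseteq\cM(A)$. Because $A$ is $\sigma$-unital I would first choose a countable increasing approximate unit $(e_n)$ that is quasicentral and, in addition, asymptotically commutes with $S$, i.e.\ $\|[e_n,s]\|\to 0$ for every $s\in S$. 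Passing to a subsequence, I would form the normalized gaps $g_n$ (essentially $g_n=(e_{n+1}-e_n)^{1/2}$) so that $\sum_n g_n^2=1$ strictly, the $g_n$ are pairwise almost orthogonal, and $\|[g_n,s]\|\to 0$ for $s\in S$.

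Next I would invoke separable $\calD$-stability. The decisive point is to collect into one countable set $Y\subseteq A$ every element of $A$ that the intended copy of $\calD$ must interact with: all $g_n$, together with the products $g_ns$ and $sg_n$ for $s\in S$. Since $Y$ is separable, separable $\calD$-stability provides a separable, $\calD$-stable subalgebra $B\subseteq A$ with $Y\subseteq B$. Applying the McDuff-type (Kirchberg) characterization of $\calD$-stability to the $\sigma$-unital algebra $B$, and diagonalizing against an increasing exhaustion of $Y$ by finite sets and of $\calD$ by finite sets with tolerances tending to $0$, I would obtain self-adjoint maps $\varphi_n\colon\calD\to\cM(B)$ that are asymptotically $*$-homomorphic, asymptotically unital on $B$, and asymptotically central relative to $B$; in particular $\|\varphi_n(d_1)\varphi_n(d_2)-\varphi_n(d_1d_2)\|\to0$, $\|\varphi_n(1)g_n-g_n\|\to0$, and $\|[\varphi_n(d),b]\|\to0$ for each fixed $b\in Y$. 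Crucially, although $\varphi_n(d)$ is only a multiplier of $B$, the sandwiched element $g_n\varphi_n(d)g_n$ lies in $B\subseteq A$, so the almost orthogonally supported, norm-bounded series $\Phi(d):=\sum_n g_n\varphi_n(d)g_n$ converges strictly to an element of $\cM(A)$.

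It then remains to check that $\psi:=\pi\circ\Phi\colon\calD\to\cQ(A)$ is a unital $*$-homomorphism whose range commutes with $X$. Self-adjointness of the $\varphi_n$ gives $\Phi(d^*)=\Phi(d)^*$. Unitality is immediate, because $\|\varphi_n(1)g_n-g_n\|\to0$ forces $\Phi(1)-\sum_n g_n^2=\sum_n g_n(\varphi_n(1)g_n-g_n)\in A$, while $\sum_n g_n^2=1$. Multiplicativity modulo $A$ follows by expanding $\Phi(d_1)\Phi(d_2)$: the off-diagonal terms vanish (or are negligible) by almost orthogonality of the $g_n$, and the diagonal terms are reorganized using asymptotic centrality of $\varphi_n(d)$ against $g_n\in B$ and asymptotic multiplicativity, the resulting error being a strictly convergent series of norm-null, almost orthogonally supported terms and hence an element of $A$.

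The step I expect to be the main obstacle is the commutation of the range of $\psi$ with $X$, i.e.\ $[\Phi(d),s]\in A$ for $s\in S$. The subtlety is that $s$ is a multiplier and does not lie in $B$, so asymptotic centrality of $\varphi_n$ does not apply to $s$ directly; this is exactly what forced the products $g_ns, sg_n$ into $B$. Using $\|[g_n,s]\|\to0$ together with asymptotic centrality of $\varphi_n(d)$ against the elements $g_n$ and $sg_n$ of $B$, one computes that both $g_n\varphi_n(d)g_ns$ and $sg_n\varphi_n(d)g_n$ are within $o(1)$ of the common expression $sg_n^2\varphi_n(d)$, so each summand of $[\Phi(d),s]=\sum_n\bigl(g_n\varphi_n(d)g_ns-sg_n\varphi_n(d)g_n\bigr)$ is norm-null; almost orthogonality of the supports then places the strict sum in $A$. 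Finally, since the entire construction used only the $\sigma$-unitality of $A$, to get the countable approximate unit, and separable $\calD$-stability, to get $B$, and never the separability of $A$ itself, the same scheme should yield the non-separable versions asserted in the introduction.
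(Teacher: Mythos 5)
Your construction breaks down at the multiplicativity step, and the failure is not a technicality but the central difficulty of the theorem. The gaps $g_n=(e_{n+1}-e_n)^{1/2}$ of an approximate unit are \emph{not} pairwise almost orthogonal: one only gets $g_mg_n=0$ for $|m-n|\geq 2$, and the consecutive overlaps $g_ng_{n+1}$ are in general of substantial norm (e.g.\ in $A=C_0(\mathbb{R})$ no approximate unit consists of almost-projections, so $g_n^2g_{n+1}^2=e_{n+1}-e_{n+1}^2$ cannot be made small). Consequently, expanding $\Phi(d_1)\Phi(d_2)$ produces adjacent cross terms $g_n\varphi_n(d_1)g_ng_{n+1}\varphi_{n+1}(d_2)g_{n+1}$ that mix images under \emph{different} maps $\varphi_n,\varphi_{n+1}$ and bear no relation to any $\varphi_k(d_1d_2)$. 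Worse, even if the $g_n$ were genuinely orthogonal, the diagonal terms reorganize (using centrality and approximate multiplicativity) to $\sum_n g_n^2\varphi_n(d_1d_2)g_n^2\approx\sum_n g_n^4\varphi_n(d_1d_2)$, whereas $\Phi(d_1d_2)\approx\sum_n g_n^2\varphi_n(d_1d_2)$; since $\sum_n g_n^4\neq 1$, these do not agree modulo $A$. In other words, a sandwich sum of this kind is only a unital completely positive map modulo $A$ -- at best of order zero -- and never a $*$-homomorphism. This is exactly why the paper does not attempt a direct construction: it splits the sum into the even- and odd-indexed parts $\Psi_0,\Psi_1$ (within each parity class the $f_n$ \emph{are} genuinely orthogonal, so each $\tilde\Psi_i$ is order zero modulo $A$), verifies that their ranges commute modulo $A$ and that $\tilde\Psi_0(1)+\tilde\Psi_1(1)=1$, and then invokes the non-trivial structural result \cite[Lemma~6.8]{hirshberg2017rokhlin}, valid because $\calD$ is strongly self-absorbing, to convert two commuting order-zero maps into a unital $*$-embedding of $\calD$. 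This passage through order-zero maps is the key idea missing from your proposal, and without it (or a substitute of comparable strength) the argument cannot be completed.

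A second, independent flaw: your final inference ``each summand is norm-null and the supports are almost orthogonal, hence the strict sum lies in $A$'' is false for sums with only one-sided block structure such as $\sum_n g_n\varphi_n(d)[g_n,s]$. Already in $\cB(H)$ with $g_n=e_{nn}$ one can choose $c_n$ with $\|c_n\|\to 0$ so that $T=\sum_n e_{nn}c_n$ is a bounded strictly convergent sum with $\|\pi(T)\|=1$ (take $c_n=\e_n u_n$ with $u_n^*e_{nn}u_n$ equal to a fixed rank-one projection $p_k$ on blocks of length $k$ where $\e_n^2=1/k$). Norm-vanishing of the terms must therefore be replaced either by the two-sided estimate of the paper's Lemma~\ref{L.MTestimate} or, as the paper actually arranges via Lemma~\ref{L.qc} and passing to subsequences, by \emph{summable} tolerances $2^{-n}$, so that the error series converges in norm inside $A$. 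This part of your argument is repairable by that standard device; the multiplicativity failure above is not. On the positive side, your reduction -- collecting $g_n$, $g_ns$, $sg_n$ into a single separable $\calD$-stable $B\subseteq A$ and using only $g_n\varphi_n(d)g_n\in B$ -- is a legitimate alternative to the paper's reduction via Lemma~\ref{Lem.separable-exhaustion-multipliers}, and your observations on unitality, self-adjointness, and injectivity (via simplicity of $\calD$) are correct.
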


The assumption of $\sigma$-unitality is not only essential in our proof of Theorem~\ref{T.Q-saturated}, but taking a closer look at a class of non-$\sigma$-unital \cstar-algebras studied by Sakai (discussed in Example~\ref{ex:1-dim-corona}) demonstrates that there exist separably $\cZ$-stable \cstar-algebras with one-dimensional corona algebras.

Our second main result, to be understood as a powerful generalization of both main results in \cite{farah2023calkin}, provides a partial converse of the above and entails a new characterization of (separable) $\calD$-stability, also having consequences to the structure of the multiplier algebra.
It fully confirms and extends \cite[Conjecture 6.2]{farah2023calkin}.

\begin{theoremi} \label{T.main}
Suppose that $\calD$ is a strongly self-absorbing \cstar-algebra and $A$ is a $\sigma$-unital \cstar-algebra.
Then the following are equivalent: 
	\begin{enumerate}[label=\textup{(\arabic*)}]
		\item \label{T.main.A-tensorially-absorbing} $A$ is separably $\calD$-stable. 
		\item \label{T.main.Q-locally-absorbing} $\cQ(\cK\otimes A )$ is separably $\calD$-stable. 
		\item \label{T.main.M-locally-absorbing} $\cM(A)$ is separably $\calD$-stable. 
	\end{enumerate}
\end{theoremi}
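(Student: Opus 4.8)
The plan is to prove that \ref{T.main.A-tensorially-absorbing} is equivalent to each of \ref{T.main.Q-locally-absorbing} and \ref{T.main.M-locally-absorbing} separately, with the decisive new content—a partial converse to Theorem~\ref{T.Q-saturated}—isolated in the two implications \emph{back to} \ref{T.main.A-tensorially-absorbing}. The conceptual hinge for the forward direction is a \emph{bridge lemma} asserting that every unital $\calD$-saturated \cstar-algebra is separably $\calD$-stable. Granting this, \ref{T.main.A-tensorially-absorbing}$\Rightarrow$\ref{T.main.Q-locally-absorbing} is a formality: since separable $\calD$-stability is unaffected by stabilization, $\cK\otimes A$ is again $\sigma$-unital and separably $\calD$-stable, so Theorem~\ref{T.Q-saturated} makes $\cQ(\cK\otimes A)$ $\calD$-saturated and hence separably $\calD$-stable. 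For \ref{T.main.A-tensorially-absorbing}$\Rightarrow$\ref{T.main.M-locally-absorbing} I would feed the same input through the extension $0\to A\to\cM(A)\to\cQ(A)\to 0$: the ideal $A$ is separably $\calD$-stable by hypothesis while the quotient $\cQ(A)$ is $\calD$-saturated by Theorem~\ref{T.Q-saturated}, so a permanence argument for the middle term (lift a unital copy of $\calD$ from the saturated quotient, splice it with the $\calD$-stability of the ideal, and reindex) yields separable $\calD$-stability of $\cM(A)$.

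For the bridge lemma I would argue by a diagonal construction. Given a unital $\calD$-saturated $C$ and a separable $B_0\subseteq C$ (which we may assume contains the unit), build an increasing chain $B_0\subseteq B_1\subseteq\cdots$ of separable subalgebras so that at stage $n$ one uses a unital embedding $\calD\to C\cap B_n'$ furnished by saturation, together with the isomorphism $\calD\cong\calD\otimes\calD$, to feed ever-finer approximate-commutation and multiplicativity witnesses into $B_{n+1}$. Since each $\phi_n(\calD)$ lies in $B_n'\cap B_{n+1}$, the sequence $(\phi_n(d))_n$ represents an element of $B:=\overline{\bigcup_n B_n}$ that commutes with every $b\in\bigcup_m B_m$, and hence with all of $B$. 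Standard bookkeeping then produces a unital embedding of $\calD$ into the sequence-algebra relative commutant $B_\omega\cap B'$, so the McDuff-type criterion certifies $B$ as a separable $\calD$-stable subalgebra of $C$ containing $B_0$.

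The heart of the theorem is the implication \ref{T.main.Q-locally-absorbing}$\Rightarrow$\ref{T.main.A-tensorially-absorbing}, confirming \cite[Conjecture~6.2]{farah2023calkin}. By stabilization-invariance it suffices to show that $B:=\cK\otimes A$ is separably $\calD$-stable, and the mechanism is to \emph{decode} central sequences from the corona. Exploiting that $B$ is $\sigma$-unital and stable, fix isometries $(t_n)$ in $\cM(B)$ with orthogonal ranges summing strictly to $1$, and consider the diagonal $*$-homomorphism $\Delta\colon B\to\cM(B)$, $\Delta(a)=\sum_n t_n a t_n^*$, together with the quotient map $\pi\colon\cM(B)\to\cQ(B)$. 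Given a separable $S_0\subseteq B$, the subalgebra $\pi(\Delta(S_0))\subseteq\cQ(B)$ is separable, so by \ref{T.main.Q-locally-absorbing} it embeds into a separable $\calD$-stable $E\subseteq\cQ(B)$. Applying the McDuff criterion to $E$ yields, for each finite $F\subseteq S_0$ and each $\e>0$, a unital copy of $\calD$ in $\cQ(B)$ commuting with $\pi(\Delta(F))$ to within $\e$; lifting along $\pi$ and then reading off the diagonal entries through the isometries $(t_n)$ converts these into approximately central, approximately multiplicative copies of $\calD$ inside $B$ relative to $F$. A final reindexing assembles them into a genuine unital embedding of $\calD$ into the relative commutant of $S_0$ in a sequence algebra of $B$, producing a separable $\calD$-stable subalgebra of $B$ that contains $S_0$; a variant of the same decoding scheme (cutting down by an approximate unit of $A$) handles \ref{T.main.M-locally-absorbing}$\Rightarrow$\ref{T.main.A-tensorially-absorbing}.

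The main obstacle is precisely this decoding step. Lifting the copies of $\calD$ from $\cQ(B)$ back to $\cM(B)$ costs control, and one must verify that approximate centrality and multiplicativity survive both the lift and the passage through the isometry decomposition—all while the ambient algebra is non-separable, so that the usual sequence-algebra reindexing must be carried out relative to the fixed separable $S_0$ rather than globally. It is here that $\sigma$-unitality is indispensable: it is what allows the isometries $(t_n)$ and the strictly convergent diagonal sums to exist in the first place, and Example~\ref{ex:1-dim-corona} shows that without it the entire scheme collapses, since the corona can degenerate to $\bbC$.
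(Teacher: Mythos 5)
The decisive gap is in your ``decoding'' step for \ref{T.main.Q-locally-absorbing}$\Rightarrow$\ref{T.main.A-tensorially-absorbing}, which is exactly where all the difficulty of the theorem is concentrated. (Your other ingredients are essentially fine: the bridge lemma is correct and is used implicitly by the paper; \ref{T.main.M-locally-absorbing}$\Rightarrow$\ref{T.main.A-tensorially-absorbing} is Proposition~\ref{P.A-in-M(A)}; and your extension ``splice'' for \ref{T.main.A-tensorially-absorbing}$\Rightarrow$\ref{T.main.M-locally-absorbing} is really Lemma~\ref{L.extensions}, i.e.\ the nontrivial two-out-of-three theorem \cite[Theorem~4.3]{ToWi:Strongly}, which should be cited rather than re-derived by ``lift and splice''.) Compression by the isometries does preserve approximate centrality: since $t_n^*\Delta(a)=at_n^*$ one gets $[t_n^*\Phi(d)t_n,a]=t_n^*[\Phi(d),\Delta(a)]t_n$, and $\|bt_n\|\to0$ for each fixed $b\in B$. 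But it destroys multiplicativity: writing $\kappa_n=t_n^*\Phi(\,\cdot\,)t_n$ for a lift $\Phi$ of your copy of $\calD$, one has, modulo terms that vanish as $n\to\infty$,
\[
\kappa_n(d_1)\kappa_n(d_2)-\kappa_n(d_1d_2)\ \approx\ t_n^*\Phi(d_1)\bigl(t_nt_n^*-1\bigr)\Phi(d_2)t_n,
\]
and nothing in your hypotheses controls the off-diagonal components of $\Phi$ that this term sees; multiplying on the right by $a\in F$ only shows that the defect approximately \emph{commutes} with $a$, not that it is small.

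This is not a repairable bookkeeping issue, because your mechanism uses only the existence of unital copies of $\calD$ in $\cQ(B)$ (approximately) commuting with $\pi(\Delta(S_0))$, and that input is provably insufficient. Take $A=\bbC$, $B=\cK$, $\calD=\cO_\infty$: identifying $\cM(\cK)$ with $\cB(H_0\otimes\ell^2)$ so that $\Delta(k)=k\otimes 1$, the algebra $1\otimes\cB(\ell^2)$ commutes exactly with $\Delta(\cK)$, meets the compacts trivially, and contains unital copies of $\cO_\infty$; hence the Calkin algebra contains unital copies of $\cO_\infty$ commuting exactly with all of $\pi(\Delta(\cK))$. If your conversion step were valid, it would produce the maps required in Proposition~\ref{P.approx-central-embeddings}\ref{P.approx-central-embeddings.4}, and Corollary~\ref{cor:axiomatizable} would then make $\cK$ separably $\cO_\infty$-stable, i.e.\ $\cK\cong\cK\otimes\cO_\infty$ --- false, as $\cK$ is AF while $\cK\otimes\cO_\infty$ is purely infinite. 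This is precisely why the paper does not prove \ref{T.main.Q-locally-absorbing}$\Rightarrow$\ref{T.main.A-tensorially-absorbing} directly but routes it through \ref{T.main.M-locally-absorbing}: in Sections~\ref{S.Schur} and~\ref{S.Gamma.A} it fixes a property (T) group $\Gamma$ with its block-diagonal representation $\bar\rho$ (Definition~\ref{D.W(Gamma)}), proves by Schur-type rigidity (Lemma~\ref{L.Schur.spicy.version}) that any corona element commuting with $\bar\rho(\Gamma)$ is approximately block-diagonal, and concludes that the block-diagonal expectation $\bar\Psi$ is a $*$-homomorphism on the relative commutant of $C=\cstar(\bar\rho(\Gamma))$ (Lemma~\ref{lem:multiplicativity}, Theorem~\ref{T.Psi+}, Corollary~\ref{C.Psi}). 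The hypothesis \ref{T.main.Q-locally-absorbing} is then exploited to find copies of $\calD$ commuting with the auxiliary algebra $C$ as well --- exactly the extra rigidity input that your compression scheme lacks --- and \ref{T.main.M-locally-absorbing}$\Rightarrow$\ref{T.main.A-tensorially-absorbing} follows by hereditarity.
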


Apart from Theorem \ref{T.Q-saturated} covering the implication \ref{T.main.A-tensorially-absorbing}$\Rightarrow$\ref{T.main.Q-locally-absorbing}, the most novel aspect in the proof of Theorem~\ref{T.main} concerns the implication \ref{T.main.Q-locally-absorbing}$\Rightarrow$\ref{T.main.M-locally-absorbing}.
This implication is true even without $\sigma$-unitality and arises as the consequence of a deeper principle that allows us to transfer (relatively) approximately central sequences from $\cQ(\cK\otimes A)$ to $\cM(A)$; see Definition~\ref{Def.approx-central-embeddings} and  Corollary~\ref{C.Psi} for details.
This is achieved in Sections \ref{S.Schur} and \ref{S.Gamma.A} based on studying the relative positions of certain unitary representations of property (T) groups inside the stable corona $\cQ(\cK\otimes A)$ akin to  \cite[Theorem~B]{farah2023calkin}, with a few upgrades.
Although we currently have no definitive evidence of this claim, we believe that a further generalization of implication \ref{T.main.Q-locally-absorbing}$\Rightarrow$\ref{T.main.A-tensorially-absorbing} cannot be expected in any reasonable generality if one only makes a structural assumption about $\cQ(A)$ without also assuming that $A$ is stable or simple.
This is demonstrated by various easy examples such as $A:=\bbC\oplus(\cK\otimes\calD)$.

The last Section \ref{S.Applications} is concerned with applications of our main results.
We present an application of Theorem~\ref{T.Q-saturated}, whereby the Calkin algebra does not have the same first-order theory as any nuclear \cstar-algebra (i.e., is not elementarily equivalent to a nuclear \cstar-algebra); see Theorem~\ref{T.Calkin}.
Also, if $A$ is any $\sigma$-unital \cstar-algebra and $\cQ(A)$ is isomorphic to the Calkin algebra, then $A$ cannot be separably $\cZ$-stable (Corollary~\ref{C.JiangSu}). 
Another application is a partial positive result to a question attributed to Sakai in~\cite{Ell:Derivations}, asking whether non-unital simple separable \cstar-algebras are isomorphic if and only if their coronas are isomorphic (see \cite[Question~4.19]{farah2022corona}).
It is an immediate consequence of Theorem~\ref{T.main} that coronas of stabilizations of strongly self-absorbing \cstar-algebras are isomorphic if and only if the algebras are isomorphic.
Next, we observe that multiplier algebras of separably $\cZ$-stable \cstar-algebras have strict comparison, which is related to earlier results in the literature obtained under comparably more restrictive assumptions.
Lastly, we can give a partial positive answer to \cite[Question 5.17]{carrion2023class} and give an improved version of the recent $\cZ$-stable $\textit{KK}$-uniqueness theorem therein; see Theorem~\ref{T.Z-stable-KK} for details.
	While we have novel applications to the model theory of coronas, we would like to emphasize that we do not develop any novel methodology of model-theoretic or set-theoretic nature.

\subsection*{Acknowledgments.} 
We are indebted to James Gabe, Chris Schafhauser and Stuart White for remarks made during the Memorial Conference in honour of Eberhard Kirchberg in M\"unster in July 2023 that led to results presented here and to Hannes Thiel for pointing out some corollaries to our results.
I.F.\ is grateful to Philip Spencer for finding a crazy bug in (otherwise excellent) \TeX studio and suggesting a fix. 
G.S.\ would like to thank Matteo Pagliero for some inspriring discussions on the topic of this article.

%I.F.\ was supported by NSERC.
G.S.\ was supported by research project C14/19/088 funded by the research council of KU Leuven, research project G085020N funded by the Research Foundation Flanders (FWO), and the European Research Council under the European Union's Horizon Europe research and innovation programme (ERC grant AMEN--101124789).

For the purpose of open access, the authors have applied a CC BY public copyright license to any author accepted manuscript version arising from this submission.
 
%%%%%
 
\section{Preliminaries} \label{S.Terminology}

For what follows below, we fix an arbitrary \cstar-algebra $A$, on which we may impose further assumptions when necessary.
Consider the multiplier algebra $\cM(A)$ of $A$ and its corona $\cQ(A):=\cM(A)/A$.

\subsection{Clubs and reflection} \label{S.Reflection}
Reflection is the phenomenon (well-studied in set theory; see e.g., the introduction to \cite{foreman2009handbook} or \cite{magidor1994does}) when properties of an uncountable (more generally nonseparable) structure reflect to its small (for example,  countable or separable) substructures.
In \cstar-algebras reflection is usually obtained by Blackadar's method  (see the discussion of  separably inheritable properties in  \cite[\S II.8.5]{Black:Operator}) and in a more general case by the Downward L\" owenheim--Skolem theorem from model theory (see e.g.,  \cite[Corollary~2.1.4]{ChaKe} for the general statement, \cite[Proposition~7.2]{BYBHU} for the continuous logic,   and \cite[\S 2.6]{Muenster}  or \cite[Appendix D]{Fa:STCstar} for \cstar-algebras).
An in-depth discussion of reflection phenomena for \cstar-algebras can be found in \cite[\S 7.3 and \S 7.4]{Fa:STCstar}. 
To give a more precise definition of the reflection that we will be using, we need the notion of an elementary submodel.
If $A\subseteq B$ are \cstar-algebras, then $A$ is an \emph{elementary submodel} of $B$ if for every first-order formula $\varphi(\bar x)$ and every tuple $\bar a$ in $A$ of the same sort as $\bar x$ the evaluations of $\varphi(\bar a)$ in $A$ and in $B$ agree  (\cite[Definition~2.3.3]{Muenster} or \cite[\S 7.1, also \S D.1]{Fa:STCstar}). A familiar example is provided by \L o\' s's Theorem (e.g., \cite[Theorem~2.3.1]{Muenster}):\ the diagonal copy of $A$ in its ultrapower $A_\cU$ is an elementary submodel. 
%The first-order theory of a \cstar-algebra is naturally identified with a homomorphism of a real Banach algebra of all sentences of the language of \cstar-algebras into $\bbR$ (see \cite[Definition~D.2.8, also Definition~D.2.7]{Fa:STCstar}). 
Properties captured by the first-order theory are called \emph{axiomatizable} or \emph{elementary}. See \cite[Theorem 2.5.1]{Muenster} for numerous examples. 

An elementary submodel inherits all axiomatizable properties of the original algebra %(such as being separably $\calD$-absorbing for a strongly self-absorbing \cstar-algebra $\calD$, )
as well as many non-axiomatizable properties (such as nuclearity and other properties definable by uniform families of formulas, see \cite[Theorem~5.7.3]{Muenster}).
The following definitions are standard (\cite[Definition~6.2.6, Example~6.2.5]{Fa:STCstar}).

\begin{definition}\label{Def.Sep}
For a \cstar-algebra $C$ let $\Sep(C)$ denote the poset of its separable \cstar-subalgebras ordered by the inclusion.
A subset of $\Sep(C)$ that is cofinal and $\sigma$-closed (i.e., closed under taking direct limits of increasing sequences) is, for historical reasons, (misleadingly) known as a \emph{closed and unbounded} set or a \emph{club}. 
\end{definition}

By the Downwards L\"owenheim--Skolem theorem (as stated in \cite[Theorem~7.19]{Fa:STCstar}), for every nonseparable \cstar-algebra, its separable elementary submodels form a  club.
This is the simplest form of reflection to separable substructures. 
An important (yet not difficult to prove) feature of clubs is:

\begin{lemma}[see {\cite[Proposition~6.2.9]{Fa:STCstar}}] \label{Lem.clubs}
Let $C$ be \cstar-algebra.
Any intersection of countably many clubs in $\Sep(C)$ is a club.
\end{lemma}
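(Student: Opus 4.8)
The plan is to establish the two defining properties of a club for the intersection $\mathcal C:=\bigcap_{n\in\bbN}\mathcal C_n$, where each $\mathcal C_n$ is a club in $\Sep(C)$. One direction, $\sigma$-closedness, is essentially free: if $A_0\subseteq A_1\subseteq\cdots$ is an increasing sequence lying entirely in $\mathcal C$, then for each fixed $n$ the whole sequence lies in $\mathcal C_n$, so its limit $\overline{\bigcup_k A_k}$ belongs to $\mathcal C_n$ by the $\sigma$-closedness of $\mathcal C_n$; as this holds for all $n$, the limit lies in $\mathcal C$. Thus no real work is needed in this direction.

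The substance of the argument is cofinality, and here I would run a diagonalization (dovetailing) scheme. Fix an arbitrary $A\in\Sep(C)$; the goal is to find $B\in\mathcal C$ with $A\subseteq B$. Choose a surjection $e\colon\bbN\to\bbN$ that attains every value infinitely often (for instance one obtained from a pairing bijection $\bbN\cong\bbN\times\bbN$). I then build an increasing chain $A=B_0\subseteq B_1\subseteq B_2\subseteq\cdots$ in $\Sep(C)$ recursively: given $B_k$, use the cofinality of the club $\mathcal C_{e(k)}$ to pick $B_{k+1}\in\mathcal C_{e(k)}$ with $B_{k+1}\supseteq B_k$. Each $B_k$ is separable, so the limit $B:=\overline{\bigcup_k B_k}$ is again separable and contains $A$.

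It remains to see that this $B$ lies in $\mathcal C$, i.e., in every $\mathcal C_n$. Fix $n$ and let $k_0<k_1<k_2<\cdots$ enumerate the (infinite) set $\{k : e(k)=n\}$. Then $B_{k_j+1}\in\mathcal C_n$ for every $j$, and $(B_{k_j+1})_j$ is an increasing sequence whose union is cofinal in the chain $(B_k)_k$, hence has the same closure $B$. Applying the $\sigma$-closedness of $\mathcal C_n$ to this subsequence yields $B\in\mathcal C_n$. Since $n$ was arbitrary, $B\in\mathcal C$, which completes the verification of cofinality.

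The only delicate point, and the part I would single out as the crux, is the bookkeeping that guarantees the single limit $B$ simultaneously arises as the limit of a chain drawn from each $\mathcal C_n$; this is precisely what the ``visit every index infinitely often'' condition on $e$ arranges, enabling $\sigma$-closedness of each individual club to be invoked at the end. Separability is preserved throughout, since countable unions and closures of separable subalgebras remain separable, so no subtlety beyond this elementary diagonal argument arises.
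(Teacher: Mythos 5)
Your proof is correct and is essentially the standard argument (the one behind the result the paper cites from \cite[Proposition~6.2.9]{Fa:STCstar}, as the paper itself gives no proof beyond the citation): $\sigma$-closedness of the intersection is immediate, and cofinality follows by the interleaving/diagonalization chain in which each club is visited infinitely often, so that $\sigma$-closedness of each individual club applies to a cofinal subchain with the same limit. No gaps.
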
 

In other words, subsets of $\Sep(C)$ that include a club form a $\sigma$-filter. 
The following reflection property of clubs requires a proof.

\begin{lemma}\label{Lem.club.restriction}
Suppose that $A\subseteq B$ are nonseparable \cstar-algebras. If $\cC$ includes a club in $\Sep(B)$, then $\{A\cap C\mid C\in \cC\}$ includes a club in $\Sep(A)$. 
\end{lemma}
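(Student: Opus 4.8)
The plan is to reduce to a single club and then produce the required club in $\Sep(A)$ as the range of a carefully built section of the intersection map. First I would fix a club $\cC_0\subseteq\cC$ in $\Sep(B)$; since $\{A\cap C\mid C\in\cC_0\}\subseteq\{A\cap C\mid C\in\cC\}$, it suffices to find a club in $\Sep(A)$ contained in the former. Write $\pi\colon\Sep(B)\to\Sep(A)$ for the monotone map $\pi(C)=A\cap C$. The key difficulty is that $\pi$ is \emph{not} $\sigma$-continuous on all of $\Sep(B)$: for an increasing sequence $C_k$ one only has $\overline{\bigcup_k(A\cap C_k)}\subseteq A\cap\overline{\bigcup_k C_k}$, and the inclusion can be strict, since an element of $A$ that is a limit of elements of $\bigcup_k C_k$ need not be a limit of elements of $\bigcup_k(A\cap C_k)$. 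This failure of $\sigma$-continuity (``overshoot'') is the main obstacle, and the whole argument is organized around circumventing it.

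The central step is to pass to a subclub on which $\pi$ \emph{is} $\sigma$-continuous. By a standard reflection/closing-off argument (adding to each separable subalgebra, for each of its elements $c$ and each $n\ge 1$, a witness $a\in A$ with $\|c-a\|<\mathrm{dist}(c,A)+1/n$, and using Lemma~\ref{Lem.clubs} to intersect with $\cC_0$) I would obtain a club $\cC^\star\subseteq\cC_0$ consisting of algebras $C$ with the property
\[
(\star)\qquad \mathrm{dist}(c,A\cap C)=\mathrm{dist}(c,A)\quad\text{for every }c\in C .
\]
A short approximation argument then shows that $(\star)$ is exactly what makes $\pi$ $\sigma$-continuous \emph{along increasing chains in} $\cC^\star$: if $C_k\in\cC^\star$ increase to $C=\overline{\bigcup_k C_k}$ and $x\in A\cap C$, then approximating $x$ by some $c\in C_k$ and applying $(\star)$ in $C_k$ (using $\mathrm{dist}(c,A)\le\|c-x\|$) produces elements of $A\cap C_k$ converging to $x$; hence $A\cap C=\overline{\bigcup_k(A\cap C_k)}$. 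I would also record that $\cC^\star$ is $\sigma$-closed and directed, so that increasing unions of its members again lie in $\cC^\star$.

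Finally I would build, by the usual closing-off recursion, a club $\cE\subseteq\Sep(A)$ together with a monotone assignment $E\mapsto C_E\in\cC^\star$ satisfying $A\cap C_E=E$ and commuting with increasing countable unions. At a successor stage, to absorb a new element $a\in A$ into the current $E=A\cap C_E$, one chooses $C'\in\cC^\star$ with $C_E\cup\{a\}\subseteq C'$ (cofinality and directedness of $\cC^\star$) and declares the next algebra to be $A\cap C'$ with witness $C'$; monotonicity is maintained because the witnesses increase. At a limit stage one takes unions: if $E_k=A\cap C_{E_k}$ increase with the $C_{E_k}$ increasing, then $C:=\overline{\bigcup_k C_{E_k}}\in\cC^\star$ and, by the $\sigma$-continuity established above, $A\cap C=\overline{\bigcup_k(A\cap C_{E_k})}=\overline{\bigcup_k E_k}$, so the limit again has the required form with witness $C$. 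This simultaneously yields that $\cE$ is cofinal (successor stages) and $\sigma$-closed (limit stages, where $(\star)$ prevents overshoot precisely because the relevant chain of witnesses is increasing), while every member of $\cE$ is of the form $A\cap C$ with $C\in\cC^\star\subseteq\cC$. Thus $\cE$ is the desired club. The only delicate point is the limit stage, which is exactly where $(\star)$ enters; everything else is routine bookkeeping in the club calculus of Lemma~\ref{Lem.clubs}.
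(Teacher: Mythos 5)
Your first two steps are sound: the family of $C\in\Sep(B)$ satisfying $(\star)$ is indeed a club (your closing-off and approximation arguments are correct), so its intersection $\cC^\star$ with a club inside $\cC$ is again a club by Lemma~\ref{Lem.clubs}, and the map $C\mapsto A\cap C$ is $\sigma$-continuous along increasing chains in $\cC^\star$, exactly as you argue. The gap is in the final step. You assert that a closing-off recursion produces a club $\cE\subseteq\Sep(A)$ together with a \emph{globally} monotone assignment $E\mapsto C_E\in\cC^\star$ with $A\cap C_E=E$, but the recursion you describe only yields coherence along the individual chains it builds: at successor stages you extend the \emph{current} witness, and at limit stages you take the union of the \emph{current} chain of witnesses. $\sigma$-closedness of $\cE$, however, must be checked for an arbitrary increasing sequence $E_1\subseteq E_2\subseteq\cdots$ of members of $\cE$, and such a sequence will in general interleave algebras produced by different runs of the recursion, whose witnesses were picked by unrelated appeals to cofinality of $\cC^\star$ and need not be comparable. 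Indeed, nothing you prove rules out that for some $E\subseteq E'$ in $\cE$ \emph{no} $D\in\cC^\star$ with $D\supseteq C_E$ satisfies $A\cap D=E'$: every such $D$ might overshoot $E'$, since property $(\star)$ controls $A\cap C$ for each given $C\in\cC^\star$ but gives no control over how witnesses can be extended. So the existence of the monotone, union-commuting assignment is assumed rather than proved, and it is exactly the crux of the lemma.

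What is missing is a \emph{canonical} monotone and $\sigma$-continuous operation $\Phi$ assigning to each $E\in\Sep(A)$ some $\Phi(E)\in\cC^\star$ with $E\subseteq\Phi(E)$; with such a $\Phi$ your plan works verbatim, taking $\cE=\{E\in\Sep(A)\mid A\cap\Phi(E)=E\}$ and using your $\sigma$-continuity result both for cofinality (iterate $E\mapsto A\cap\Phi(E)$) and for $\sigma$-closedness. But producing such an operation for an \emph{arbitrary} club is precisely the content of Kueker's theorem (every club contains the closure points of countably many finitary functions), and this is the engine of the paper's proof: there, the problem is transferred to countable subsets of suitable dense sets via \cite[Proposition~7.2.7 and Corollary~7.2.8]{Fa:STCstar}, and the projection statement for discrete clubs, \cite[Corollary~6.4.2]{Fa:STCstar} --- a corollary of Kueker's characterization --- supplies the needed coherence. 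Your $(\star)$-device is a nice metric substitute for the ``relatively closed countable subsets'' appearing in \cite[Proposition~7.2.7]{Fa:STCstar}, but without a Kueker-type function characterization of (a club inside) $\cC$, the final recursion cannot be completed as described: the arbitrary cofinality choices destroy the monotonicity that your limit stages depend on.
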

\begin{proof} We will use a well-known fact about clubs in discrete structures. If~$\sfX$ is an uncountable set then~$[\sfX]^{\aleph_0}$ is the poset of all countable subsets of~$\sfX$.
A club in this poset is a cofinal family closed under countable increasing unions.
By a corollary to Kueker's characterization of clubs in $[\sfX]^{\aleph_0}$ (\cite[Corollary~6.4.2]{Fa:STCstar}), if $\sfY\subseteq \sfX$ are uncountable sets and $\cE$ is a club in $[\sfX]^{\aleph_0}$, then $\{p\cap \sfY\mid p\in \cE\}$ includes a club in $[\sfY]^{\aleph_0}$. 
	
	Assume $A\subseteq B$ are nonseparable \cstar-algebras. 
	By \cite[Proposition~7.2.7]{Fa:STCstar} there is a dense subset $\sfD_B\subseteq B$ such that the set of relatively closed countable subsets of $\sfD_B$,  
	$\{p\in [\sfD_B]^{\aleph_0}\mid \overline p\cap \sfD_B=p\}$,
	includes a club in $[\sfD_B]^{\aleph_0}$. This implies  that, for any $\cC\subseteq \Sep(B)$, $\cC$ includes a club in $\Sep(B)$ if and only if   the set $\{C\cap \sfD_B\mid C\in \cC\}$ includes a club in $[\sfD_B]^{\aleph_0}$ (\cite[Corollary~7.2.8]{Fa:STCstar}).
	Again by \cite[Proposition~7.2.7]{Fa:STCstar} fix a dense subset $\sfD_A$ of $A$ such that the set of countable relatively closed subsets of~$\sfD_A$ includes a club. Then $\sfD:=\sfD_A\cup \sfD_B$ also has the property that the family of its countable relatively closed subsets includes a club. 
	
	Finally, consider a subset $\cC$ of $\Sep(B)$ that includes a club. By Lemma~\ref{Lem.clubs}, $\{C\cap \sfD \mid C\in\cC\}$ includes a club in $[\sfD]^{\aleph_0}$.
	As pointed out above,  \cite[Corollary~6.4.2]{Fa:STCstar} implies that $\{C\cap \sfD_A\mid C\in \cC\}$ includes a club in $[\sfD_A]^{\aleph_0}$.
	Again by  \cite[Corollary 7.2.8]{Fa:STCstar}, $\{C\cap A\mid C\in \cC\}$ includes a club, as required. 
\end{proof}

Needless to say, this proof shows that  Lemma~\ref{Lem.club.restriction} is true for arbitrary metric structures. 

\subsection{Strongly self-absorbing \cstar-algebras and separable absorption}

A separable unital \cstar-algebra $\calD$ is said to be \emph{strongly self-absorbing} if $\calD\ncong\mathbb{C}$ and there exists an isomorphism $\calD\to\calD\otimes \calD$ that is approximately unitarily equivalent to the embedding $d\mapsto d\otimes 1_{\calD}$.
All strongly self-absorbing \cstar-algebras are nuclear and simple, and they satisfy $\calD\cong \calD^{\otimes\infty}$; see \cite{ToWi:Strongly}. 
By \cite[Remark~3.3]{winter2011strongly}, it automatically follows that $\calD$ is $K_1$-injective and this property is redundant despite being explicitly assumed in early literature about strongly self-absorbing \cstar-algebras.

The known strongly self-absorbing \cstar-algebras are the Jiang--Su algebra $\cZ$ \cite{JiangSu99}, UHF algebras of infinite type, the Cuntz algebra $\cO_\infty$, tensor products of UHF algebras of infinite type with $\cO_\infty$, and the Cuntz algebra $\cO_2$ \cite{Cuntz77}.
If all separable nuclear \cstar-algebras satisfy the Universal Coefficient Theorem then there are no other strongly self-absorbing \cstar-algebras (\cite[Corollary~6.7]{tikuisis2017quasidiagonality})

We recall the common characterizations of $\calD$-stability for separable \cstar-algebras.
In the following $A_\infty$ denotes the (asymptotic) sequence algebra $\ell_\infty(A)/c_0(A)$ and~$A^\perp$ denotes the two-sided annihilator of $A$ in the ambient \cstar-algebra.

\begin{theorem} \label{Thm.Cstar-McDuff}
Let $A$ be a separable \cstar-algebra and $\calD$ a strongly self-absorbing \cstar-algebra.
The following are equivalent:
	\begin{enumerate}[label=\textup{(\arabic*)}]
	\item $A\cong A\otimes\calD$. \label{Thm.Cstar-McDuff:1}
	\item There exists a unital $*$-homomorphism $\calD\to (A_\infty\cap A')/(A_\infty\cap A^\perp)$.\label{Thm.Cstar-McDuff:2}
	\item There exists a sequence of unital $*$-homomorphisms $\pi_n: \calD\to \cM(A)$ with $\|[\pi_n(d),a]\|\to 0$ for all $d\in\calD$ and $a\in A$. \label{Thm.Cstar-McDuff:3}
	\end{enumerate}
\end{theorem}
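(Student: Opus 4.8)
The plan is to prove the cycle of implications \ref{Thm.Cstar-McDuff:1}$\Rightarrow$\ref{Thm.Cstar-McDuff:3}$\Rightarrow$\ref{Thm.Cstar-McDuff:2}$\Rightarrow$\ref{Thm.Cstar-McDuff:1}, treating \ref{Thm.Cstar-McDuff:2} as the central-sequence (McDuff-type) formulation, \ref{Thm.Cstar-McDuff:3} as its reformulation at the level of $\cM(A)$, and regarding \ref{Thm.Cstar-McDuff:2}$\Rightarrow$\ref{Thm.Cstar-McDuff:1} as the substantive absorption statement. Throughout I use that $\calD$ is separable with $\calD\cong\calD^{\otimes\infty}$, and that the defining property of strong self-absorption supplies an approximately inner half-flip, i.e.\ the two embeddings $d\mapsto d\otimes 1_{\calD}$ and $d\mapsto 1_{\calD}\otimes d$ of $\calD$ into $\calD\otimes\calD$ are approximately unitarily equivalent, and the first-factor embedding is approximately unitarily equivalent to an isomorphism $\calD\to\calD\otimes\calD$.

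For \ref{Thm.Cstar-McDuff:1}$\Rightarrow$\ref{Thm.Cstar-McDuff:3} I would fix an isomorphism $A\cong A\otimes\calD^{\otimes\infty}$ and let $\pi_n\colon\calD\to\cM(A\otimes\calD^{\otimes\infty})$ be the unital $*$-homomorphism sending $d$ to the multiplier $1_{\cM(A)}\otimes(\cdots\otimes 1\otimes d\otimes 1\otimes\cdots)$, with $d$ placed in the $n$-th tensor factor of $\calD^{\otimes\infty}$. On the dense $*$-subalgebra of elements supported on finitely many factors, the commutator $[\pi_n(d),a]$ vanishes as soon as $n$ exceeds the support of $a$; since the $\pi_n(d)$ are uniformly bounded, a density argument gives $\|[\pi_n(d),a]\|\to 0$ for every $d\in\calD$ and $a\in A\otimes\calD^{\otimes\infty}$. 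Transporting $\pi_n$ along the fixed isomorphism (which induces $\cM(A)\cong\cM(A\otimes\calD^{\otimes\infty})$) yields the required sequence into $\cM(A)$.

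For \ref{Thm.Cstar-McDuff:3}$\Rightarrow$\ref{Thm.Cstar-McDuff:2} the task is to push the multiplier-valued sequence into $A_\infty$. I would choose a sequential quasicentral approximate unit $(e_n)$ of $A$ and set $b_n(d):=e_n^{1/2}\pi_n(d)e_n^{1/2}\in A$. The assignment $d\mapsto (b_n(d))_n+c_0(A)$ is contractive and genuinely $*$-preserving. Using quasicentrality of $(e_n)$ together with the hypothesis $\|[\pi_n(d),a]\|\to 0$, one checks that its image commutes with the diagonal copy of $A$, so it lands in $A_\infty\cap A'$; the error terms obstructing exact multiplicativity, which all involve factors of the form $(1-e_n)\pi_n(d)a$ that are annihilated by $A$ in the limit, lie in $A_\infty\cap A^\perp$. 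Hence, after passing to the quotient by the annihilator, the map becomes a bona fide $*$-homomorphism $\calD\to (A_\infty\cap A')/(A_\infty\cap A^\perp)$, and it is unital because $\pi_n(1_\calD)=1_{\cM(A)}$ forces $b_n(1_\calD)=e_n$, whose class is the unit of the central-sequence algebra.

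The main obstacle is \ref{Thm.Cstar-McDuff:2}$\Rightarrow$\ref{Thm.Cstar-McDuff:1}, the actual absorption. Given a unital $*$-homomorphism into $(A_\infty\cap A')/(A_\infty\cap A^\perp)$, I would first extract, for each finite $F\subseteq A$ and each $\e>0$, a unital approximately multiplicative map $\psi\colon\calD\to A$ that is approximately central relative to $(F,\e)$; combined with $\mathrm{id}_A$ via $a\otimes d\mapsto a\,\psi(d)$, approximate centrality makes these into approximate $*$-homomorphisms $\gamma_n\colon A\otimes\calD\to A$ satisfying $\gamma_n(a\otimes 1_\calD)\approx a$. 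To convert this into an isomorphism I would run a one-sided Elliott approximate-intertwining between the inclusion $\iota\colon A\to A\otimes\calD$, $\iota(a)=a\otimes 1_{\calD}$, and the maps $\gamma_n$: one side $\gamma_n\circ\iota\approx\mathrm{id}_A$ is immediate from the approximate unit, whereas forcing $\iota\circ\gamma_n$ to be approximately the identity up to inner automorphisms requires exactly the strongly self-absorbing hypothesis, whose approximately inner half-flip lets successive approximate copies of $\calD$ be matched unitarily and the intertwining to close. The bookkeeping of tolerances and the verification that the intertwining converges to an isomorphism $A\otimes\calD\xrightarrow{\ \cong\ }A$ is the technical heart of the argument; the remaining implications are comparatively soft.
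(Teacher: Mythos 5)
Your implications (1)$\Rightarrow$(3) and (3)$\Rightarrow$(2) are essentially fine: the first is exactly the paper's ``by hand'' argument using $\calD\cong\calD^{\otimes\infty}$, and the compression $d\mapsto e_n^{1/2}\pi_n(d)e_n^{1/2}$ along a quasicentral approximate unit does yield a map into $A_\infty\cap A'$ that becomes a unital $*$-homomorphism after quotienting by $A_\infty\cap A^\perp$ (the multiplicativity error $e_n^{1/2}\pi_n(d_1)(e_n-1)\pi_n(d_2)e_n^{1/2}$ is indeed annihilated by $A$). The genuine gap is (2)$\Rightarrow$(1). What you give there is a statement of the standard strategy, not a proof: you explicitly defer ``the bookkeeping of tolerances and the verification that the intertwining converges,'' and that deferred part is precisely the substance of the implication. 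It is the content of the results the paper invokes by citation rather than reproving, namely \cite[Theorem~2.3]{ToWi:Strongly} (or \cite[Proposition~4.4]{Kirc:Central}) together with \cite[Theorem~7.2.2]{Ror:Classification}; a sketch that stops where the hard work begins does not establish the equivalence.

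Two concrete points in the sketch would also need repair even as a blueprint. First, $A$ is not assumed unital, so there are no unital maps $\psi\colon\calD\to A$ at all; after lifting, condition (2) only provides maps for which $\psi(1_\calD)$ acts approximately as a unit on a prescribed finite subset of $A$ (this is exactly why the quotient by $A_\infty\cap A^\perp$ appears in (2)), and the whole intertwining must then be run with unitaries in $(A\otimes\calD)^\dagger$ or $\cM(A\otimes\calD)$, approximately fixing $\iota(A)$ and approximately conjugating $A\otimes\calD$ into $\iota(A)$. Second, your maps $\gamma_n\colon A\otimes\calD\to A$, $a\otimes d\mapsto a\psi(d)$, are not $*$-homomorphisms but only approximately multiplicative on finite sets, and even their existence as globally defined bounded linear maps on the tensor product requires justification (via nuclearity of $\calD$ or a completely positive formulation); Elliott's one-sided intertwining in its usual form requires honest $*$-homomorphisms, or else a carefully quantified approximate version, which is exactly the technical machinery you omit. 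None of this means the strategy fails---it is the strategy of the cited proofs---but as written the key implication is assumed rather than proved, whereas the remaining two implications you do prove are the comparatively soft ones.
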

\begin{proof}
The equivalence \ref{Thm.Cstar-McDuff:1}$\Leftrightarrow$\ref{Thm.Cstar-McDuff:2} can be deduced as the consequence of either \cite[Theorem 2.3]{ToWi:Strongly} or of \cite[Proposition 4.4]{Kirc:Central}.
Since $\calD\cong\calD^{\otimes\infty}$, it is immediate to verify the implication \ref{Thm.Cstar-McDuff:1}$\Rightarrow$\ref{Thm.Cstar-McDuff:3} by hand.
The implication \ref{Thm.Cstar-McDuff:3}$\Rightarrow$\ref{Thm.Cstar-McDuff:1} follows from \cite[Proposition 1.5]{ToWi:Strongly} and \cite[Theorem 7.2.2]{Ror:Classification}.
\end{proof}

The following concept has been circulating within the \cstar-community for a long time under various similar names.
For instance, for unital \cstar-algebras it has been called ``potentially $\calD$-absorbing'' in \cite{FaHaRoTi:Relative}.
In the stated generality, we use the name as coined by Schafhauser in \cite[Definition 1.4]{Schafhauser2020}:

\begin{definition} \label{Def.sep-D-stable}
Let $A$ be a \cstar-algebra and $\mathcal D$ a strongly self-absorbing \cstar-algebra.
We say that $A$ is \emph{separably $\mathcal D$-stable} if for every separable \cstar-subalgebra $C_0\subseteq A$, there exists a separable \cstar-algebra $C_1\subseteq A$ containing $C_0$ and $C_1\cong C_1\otimes\mathcal D$. 
\end{definition}

Given that the category of separable $\calD$-stable \cstar-algebras is closed under direct limits (\cite[Corollary 3.4]{ToWi:Strongly}), we get:

\begin{proposition} \label{Prop.ssa-clubs}
A \cstar-algebra $A$ is separably $\calD$-stable if and only if the separable $\calD$-stable \cstar-subalgebras of $A$ form a club in $\Sep(A)$.
\end{proposition}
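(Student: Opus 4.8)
The plan is to observe that both directions reduce to unpacking the definition of a club together with the cited closure of $\calD$-stable algebras under direct limits. Write $\cS_\calD := \{C \in \Sep(A) \mid C \cong C \otimes \calD\}$ for the collection of separable $\calD$-stable subalgebras of $A$. The claim is precisely that $A$ is separably $\calD$-stable if and only if $\cS_\calD$ is a club in $\Sep(A)$, i.e., both cofinal and $\sigma$-closed.

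For the backward implication, I would argue that cofinality of $\cS_\calD$ alone already yields the conclusion: given any separable $C_0 \subseteq A$, cofinality produces some $C_1 \in \cS_\calD$ with $C_0 \subseteq C_1$, and since $C_1 \cong C_1 \otimes \calD$, this $C_1$ is exactly the witness demanded by Definition~\ref{Def.sep-D-stable}. Thus the $\sigma$-closedness part of the club hypothesis is not even needed in this direction.

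For the forward implication, I would verify the two defining properties of a club separately. Cofinality is again immediate, being a verbatim restatement of separable $\calD$-stability: Definition~\ref{Def.sep-D-stable} asserts that every separable $C_0 \subseteq A$ is contained in some $\calD$-stable $C_1$, which is precisely an element of $\cS_\calD$ above $C_0$. For $\sigma$-closedness, I would take an increasing sequence $C_1 \subseteq C_2 \subseteq \cdots$ in $\cS_\calD$ and note that its direct limit in $\Sep(A)$ is the norm closure $\overline{\bigcup_n C_n}$. Since each $C_n$ is $\calD$-stable, the cited closure of the class of separable $\calD$-stable \cstar-algebras under direct limits (\cite[Corollary~3.4]{ToWi:Strongly}) gives that this limit is again $\calD$-stable, hence lies in $\cS_\calD$, as required.

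The only point that calls for care---and it is the sole nontrivial ingredient---is reconciling the two meanings of ``direct limit'': the order-theoretic notion appearing in the definition of $\sigma$-closedness (the closed union of an increasing chain of subalgebras) against the categorical inductive limit appearing in \cite[Corollary~3.4]{ToWi:Strongly}. Once one observes that these coincide for an increasing sequence of \cstar-subalgebras with inclusions as connecting maps, the argument is complete. I therefore do not anticipate a genuine obstacle here beyond correctly invoking the stability of $\calD$-stability under inductive limits; the proposition is essentially a translation of Definition~\ref{Def.sep-D-stable} into the language of clubs.
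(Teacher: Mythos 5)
Your proof is correct and is essentially the paper's own argument: the paper states this proposition as an immediate consequence of the closure of separable $\calD$-stable \cstar-algebras under direct limits (\cite[Corollary~3.4]{ToWi:Strongly}), with cofinality being a verbatim restatement of Definition~\ref{Def.sep-D-stable}, exactly as you observe. Your additional remarks---that only cofinality is needed for the backward direction, and that the order-theoretic and categorical notions of direct limit coincide for increasing chains of subalgebras---are accurate and fill in the details the paper leaves implicit.
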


Notice that when a \cstar-algebra $A$ is genuinely $\calD$-stable, say via an isomorphism $\varphi: A\otimes\calD\to A$, then the collection $\{ \varphi(C\otimes\calD) \mid C\in\Sep(A)\}$ forms a club of separable $\calD$-stable subalgebras of $A$, so $A$ is separably $\calD$-stable.
Separable $\mathcal D$-stability is clearly equivalent to tensorial $\mathcal D$-absorption when $A$ is separable, but is weaker and more common than $\mathcal D$-absorption for nonseparable \cstar-algebras.
As a matter of fact, coronas of $\sigma$-unital \cstar-algebras and nontrivial ultraproducts are tensorially indecomposable, and are therefore not $\calD$-absorbing (\cite{Gha:SAW*}, see also \cite[Theorem~15.4.5]{Fa:STCstar}).

Separable $\calD$-stability is a first-order property of a \cstar-algebra (in other words, it is axiomatizable; see \cite[\S 2.4]{Muenster}) which, together with \L o\'s's Theorem, explains why it is preserved by ultraproducts.
We shall study the concept of separable $\calD$-stability further and put it into a broader context that is meaningful for not necessarily strongly self-absorbing \cstar-algebras.

In the following we will also appeal to the reduced product construction related to a filter on a set.
Given a filter $\cF$ on a set~$I$, the reduced product $C_\cF$ is defined in e.g., \cite[Definition 16.2.1]{Fa:STCstar} (see also the remark about dual filters following this definition). 
Upon applying this construction to the Fréchet filter on $\bbN$ (usually denoted by the symbol $\infty$), one recovers the asymptotic sequence algebra.

\begin{proposition} \label{P.approx-central-embeddings}
Let $B$ and $C$ be \cstar-algebras such that $B$ is separable and unital.
The following are equivalent. 
	\begin{enumerate}[label=\textup{(\arabic*)}]
	\item \label{P.approx-central-embeddings.1}
	For every separable \cstar-subalgebra $A\subseteq C_\infty$, there exists a unital $*$-ho\-mo\-mor\-phism from $B$ to $(C_\infty\cap A')/(C_\infty\cap A^\perp)$.
	\item \label{P.approx-central-embeddings.2}
	For every separable \cstar-subalgebra $A\subseteq C$, there exists a unital $*$-ho\-mo\-mor\-phism from $B$ to $(C_\infty\cap A')/(C_\infty\cap A^\perp)$.\footnote{Here $A$ is identified with a \cstar-subalgebra of the diagonal copy of $C$ inside $C_\infty$.}
	\item \label{P.approx-central-embeddings.3}
	There exists some non-empty set $I$ and a filter $\cF$ on $I$ such that for every separable \cstar-subalgebra $A\subseteq C$, there exists a unital $*$-homomorphism from $B$ to $(C_\cF\cap A')/(C_\cF\cap A^\perp)$.
	\item \label{P.approx-central-embeddings.4}
	For every $\varepsilon>0$, every self-adjoint finite subset $F\subset C$ and every finite-dimensional, unital,  self-adjoint subspace $S\subset B$, there exists a $*$-linear map $\kappa: S^2 \to C$ such that for all $c\in F$ and $b, b_1, b_2$ in $S$ the following conditions hold. 
		\begin{itemize}
		\item $\|\kappa(b)\|\leq (1+\varepsilon)\|b\|$
		\item $\| [\kappa(b),c]\|\leq \|b\|\varepsilon$
		\item $\|(\kappa(b_1)\kappa(b_2)-\kappa(b_1b_2))c\|\leq \|b_1\|\|b_2\|\varepsilon$
		\item $\|(1-\kappa(1))c\|\leq\varepsilon$.
		\end{itemize}
	\end{enumerate}
\end{proposition}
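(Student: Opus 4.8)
The plan is to establish the cyclic chain \ref{P.approx-central-embeddings.1}$\Rightarrow$\ref{P.approx-central-embeddings.2}$\Rightarrow$\ref{P.approx-central-embeddings.3}$\Rightarrow$\ref{P.approx-central-embeddings.4}$\Rightarrow$\ref{P.approx-central-embeddings.1}, in which the first two implications are formal and the real work sits in \ref{P.approx-central-embeddings.3}$\Rightarrow$\ref{P.approx-central-embeddings.4} and \ref{P.approx-central-embeddings.4}$\Rightarrow$\ref{P.approx-central-embeddings.1}. For \ref{P.approx-central-embeddings.1}$\Rightarrow$\ref{P.approx-central-embeddings.2} I would simply note that a separable $A\subseteq C$, embedded diagonally, is a separable \cstar-subalgebra of $C_\infty$, so \ref{P.approx-central-embeddings.2} is a special case of \ref{P.approx-central-embeddings.1}. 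For \ref{P.approx-central-embeddings.2}$\Rightarrow$\ref{P.approx-central-embeddings.3} I would take $I=\bbN$ with $\cF$ the Fréchet filter $\infty$, for which $C_\cF=C_\infty$, so that \ref{P.approx-central-embeddings.2} is exactly \ref{P.approx-central-embeddings.3} for this filter.

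For \ref{P.approx-central-embeddings.3}$\Rightarrow$\ref{P.approx-central-embeddings.4}, fix $\e>0$, $F$, and $S$, set $A:=C^*(F)\subseteq C$, and let $\psi\colon B\to E:=(C_\cF\cap A')/(C_\cF\cap A^\perp)$ be the unital $*$-homomorphism furnished by \ref{P.approx-central-embeddings.3}. The idea is to lift the restriction $\psi|_{S^2}$, a unital completely positive (hence completely contractive) map on the finite-dimensional operator system $S^2$, through the complete quotient map $C_\cF\cap A'\to E$ to a completely bounded map of cb-norm at most $1+\e/2$; symmetrizing via $b\mapsto\tfrac12\bigl(\,\cdot\,+(\,\cdot\,)^*\bigr)$ renders it $*$-linear without increasing the norm. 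Reading the lift off coordinatewise yields $*$-linear maps $\kappa_i\colon S^2\to C$. Since $\psi$ lands in the relative commutant and is multiplicative and unital, for $c\in F$ the commutators $[\kappa_i(b),c]$ and the multiplicative defects $(\kappa_i(b_1)\kappa_i(b_2)-\kappa_i(b_1b_2))c$ tend to $0$ along $\cF$, while $\|\kappa_i(b)\|\le(1+\e/2)\|b\|$ in the limit; choosing the lift of $\psi(1_B)=1_E$ to be a coordinate of a quasicentral approximate unit of $A$ makes $(1-\kappa_i(1))c\to_\cF 0$ as well. As $S$ and $F$ are finite and the unit ball of $S^2$ is compact, a finite intersection of $\cF$-large sets isolates a single index $i$ at which all four inequalities of \ref{P.approx-central-embeddings.4} hold; one then sets $\kappa:=\kappa_i$.

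For \ref{P.approx-central-embeddings.4}$\Rightarrow$\ref{P.approx-central-embeddings.1}, given a separable $A\subseteq C_\infty$ I fix self-adjoint generators $\alpha_k=[(\alpha_{k,n})_n]$ together with an increasing chain $S_1\subseteq S_2\subseteq\cdots\subseteq B$ of finite-dimensional, unital, self-adjoint subspaces with dense union. Putting $F_n:=\{\alpha_{k,n}\mid k\le n\}$ and applying \ref{P.approx-central-embeddings.4} with $\e_n\downarrow 0$, the set $F_n$, and $S_{m(n)}$ where $m(n)\to\infty$, I obtain $*$-linear maps $\kappa_n\colon S_{m(n)}^2\to C$ and define $\Phi(b):=[(\kappa_n(b))_n]$ for $b\in\bigcup_m S_m$. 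Property (a) bounds $\Phi(b)$ and makes $\Phi$ contractive; property (b) forces $\Phi(b)$ to commute with every generator, hence $\Phi(b)\in C_\infty\cap A'$; and self-adjointness of $F_n$ lets me recover the left-hand versions of (c)/(d) by passing to adjoints, so that modulo $C_\infty\cap A^\perp$ the map $\Phi$ is multiplicative and $\Phi(1_B)$ acts as a two-sided identity on $A$. The principle that an element of $C_\infty\cap A'$ acting as an exact unit on $A$ represents the unit of $E$ then yields $\Phi(1_B)=1_E$. Thus $\Phi$ is a unital $*$-homomorphism on the dense $*$-subspace $\bigcup_m S_m$, and the contractive estimate from (a) permits a continuous extension to the sought unital $*$-homomorphism $B\to E$, multiplicativity passing to the closure as a closed condition.

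The step I expect to be the main obstacle is the unitality bookkeeping that recurs in both nontrivial implications: the extraction of the last inequality of \ref{P.approx-central-embeddings.4} in \ref{P.approx-central-embeddings.3}$\Rightarrow$\ref{P.approx-central-embeddings.4}, and the identity $\Phi(1_B)=1_E$ in \ref{P.approx-central-embeddings.4}$\Rightarrow$\ref{P.approx-central-embeddings.1}, both hinge on the structural fact that the relative commutant sequence algebra $(C_\cF\cap A')/(C_\cF\cap A^\perp)$ is unital with unit represented by a quasicentral approximate unit of the separable algebra $A$. This is precisely where separability (\emph{σ}-unitality) of $A$ enters and must be invoked with care. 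A secondary technical point is producing the near-isometric, $*$-linear lift with uniform control over the finite-dimensional $S^2$, which I handle by operator-space lifting through the complete quotient map together with a compactness argument on the unit ball of $S^2$.
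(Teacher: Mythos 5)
Your overall architecture is the same as the paper's: the cycle \ref{P.approx-central-embeddings.1}$\Rightarrow$\ref{P.approx-central-embeddings.2}$\Rightarrow$\ref{P.approx-central-embeddings.3}$\Rightarrow$\ref{P.approx-central-embeddings.4}$\Rightarrow$\ref{P.approx-central-embeddings.1}, with the first two implications trivial, the coordinatewise reading of a lift along the filter in \ref{P.approx-central-embeddings.3}$\Rightarrow$\ref{P.approx-central-embeddings.4}, and the diagonal choice $F_n=\{\alpha_{k,n}\mid k\le n\}$ in \ref{P.approx-central-embeddings.4}$\Rightarrow$\ref{P.approx-central-embeddings.1}; that last implication, including the treatment of unitality via an element of $C_\infty\cap A'$ acting as an exact unit on $A$, matches the paper and is correct. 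However, there is a genuine gap in your \ref{P.approx-central-embeddings.3}$\Rightarrow$\ref{P.approx-central-embeddings.4}: the step ``lift $\psi|_{S^2}$ through the complete quotient map to a completely bounded map of cb-norm at most $1+\varepsilon/2$'' invokes a principle that is false in general. Norm-controlled lifting of a u.c.p.\ map on a finite-dimensional operator system through an arbitrary \cstar-quotient is essentially the local lifting property of $B$ (equivalently, $1$-exactness of the relevant dual operator space), and by Junge--Pisier this fails: $\mathcal B(H)\otimes_{\min}\mathcal B(H)\neq\mathcal B(H)\otimes_{\max}\mathcal B(H)$, and a standard separabilization produces separable unital $B$ without the LLP. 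Since \ref{P.approx-central-embeddings.3} allows an arbitrary filter $\cF$, you also cannot appeal to any special ($\sigma$-ideal-type) structure of the quotient $(C_\cF\cap A')/(C_\cF\cap A^\perp)$. Even at the Banach-space level, finite-dimensional maps into quotients do not admit $(1+\varepsilon)$-norm-controlled lifts in general (e.g., $\ell_\infty^n\subset c_0$ cannot be lifted with uniformly bounded norm through a quotient map $\ell_1\to c_0$, by cotype), so some genuinely \cstar-algebraic input is unavoidable here.

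The correct repair --- and this is what the paper does --- needs only the \emph{operator norm} bound on $S$ (note that condition \ref{P.approx-central-embeddings.4} asks $\|\kappa(b)\|\le(1+\varepsilon)\|b\|$ only for $b\in S$, not a cb-estimate on $S^2$): take an arbitrary $*$-linear algebraic lift $\varphi_0\colon B\to E$ of $\psi$ (Hamel basis through the quotient, no norm control), let $(e_\lambda)$ be an approximate unit of the kernel ideal, and use the \cstar-quotient norm formula $\|\pi(x)\|=\lim_\lambda\|(1-e_\lambda)x(1-e_\lambda)\|$; since the unit ball of the finite-dimensional space $S$ is compact, Dini's theorem yields a single $\lambda$ for which the compressed lift $\varphi=(1-e_\lambda)\varphi_0(\cdot)(1-e_\lambda)$ has norm at most $1+\varepsilon$ on $S$. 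Compressing by elements of $\ker\pi$ does not change $\pi\circ\varphi=\psi$, so all your filter-limit conditions (commutation, multiplicativity defect, unitality --- the unit of the quotient being represented by an approximate unit of $A$, as you correctly observe) go through unchanged, and your concluding finite-intersection/compactness argument then isolates the desired index $i$ exactly as in the paper.
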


\begin{definition} \label{Def.approx-central-embeddings}
Let $B$ and $C$ be \cstar-algebras such that $B$ is separable and unital.
We say that \emph{$C$ admits large approximately central maps from $B$} if the equivalent conditions in Proposition~\ref{P.approx-central-embeddings} are satisfied.
\end{definition}

\begin{proof}[Proof of Proposition~\ref{P.approx-central-embeddings}]
The implications \ref{P.approx-central-embeddings.1}$\Rightarrow$\ref{P.approx-central-embeddings.2}$\Rightarrow$\ref{P.approx-central-embeddings.3} are trivial.

\ref{P.approx-central-embeddings.3}$\Rightarrow$\ref{P.approx-central-embeddings.4}:
Let the triple $(\varepsilon, F, S)$ be given.
Let $A\subseteq C$ be the \cstar-algebra generated by $F$.
By assumption, there exist filter $\cF$ on a set $I$ and a unital $*$-homomorphism
\[
\psi: B \to (C_\cF\cap A')/(C_\cF\cap A^\perp).
\]
We can view the codomain as a quotient of
\[
E=\{ (x_i)_i\in \ell^\infty(I, C) \mid \lim_{i\to\cF} \| [x_i, a] \|=0 \text{ for all } a\in A.\}
\]
Let $\pi: E\to (C_\cF\cap A')/(C_\cF\cap A^\perp)$ be the quotient map.
By basic linear algebra and by applying the axiom of choice to a Hamel basis consisting of self-adjoint elements in $B$,  we may find a (not necessarily continuous) $*$-linear map $\varphi_0: B\to E$ with $\pi\circ\varphi_0=\psi$.
If $(e_\lambda)_\lambda$ is any increasing approximate unit of the ideal $\ker(\pi)$ in $E$, then it follows by the characterization of quotient norms of \cstar-algebras that
\[
\lim_{\lambda} \|(1-e_\lambda)\varphi_0(b)\|=\|\psi(b)\|\leq\|b\|,\quad b\in B.
\]
Using the fact that $S$ is a finite-dimensional normed vector space and hence has a compact unit ball, we may choose (as a consequence of Dini's theorem) an index $\lambda$ such that the map
\[
\varphi=(1-e_\lambda)\cdot\varphi_0(\_)\cdot(1-e_\lambda): B\to E
\]
has operator norm at most $1+\varepsilon$ when restricted to $S$.
Since $E\subseteq\ell^\infty(I, C)$, we may represent $\varphi$ by a family of $*$-linear maps $\kappa_i: B\to C$ for $i\in I$.
Then there is a set $I_0\in \cF$ such that $\|\kappa_j(b)\|\leq (1+\varepsilon)\|b\|$ for all $b\in S$ and $j\in I_0$, and furthermore $[(\kappa_i(b))_i]\in C_\cF\cap A'$ and $[(\kappa_i(b))_i]+(C_\cF\cap A^\perp) = \psi(b)$ for all $b\in S$.
The former condition implies that for all $b\in S$ and $a\in A$ we have $\limsup_{i\to\cF} \| [\kappa_i(b), a] \|=0$. 
The fact that $\psi$ is a unital map translates to $\limsup_{i\to\cF} \kappa_i(1)a=a$ for all $a\in A$.
The fact that $\psi$ is multiplicative translates to $\limsup_{i\to\cF} \| (\kappa_i(b_1)\kappa_i(b_2)-\kappa_i(b_1b_2))a \| = 0$ for all $a\in A$ and $b_1,b_2\in S$.
Putting all this together, we can see from $A\supset F$ that there exists some element $i\in I_0$ such that the $*$-linear map $\kappa=\kappa_i|_{S^2}$ satisfies the desired properties for the triple $(\varepsilon, F, S)$.

\ref{P.approx-central-embeddings.4}$\Rightarrow$\ref{P.approx-central-embeddings.1}:
Throughout the argument that follows, we fix an increasing sequence of finite-dimensional subspaces $1\in S_n=S_n^*\subset B$ satisfying $S_n^2\subseteq S_{n+1}$ for all $n\geq 1$ and such that the union $\bigcup_{n\geq 1} S_n$ is dense in $B$.  
It follows readily from these properties that the union $B_0= \bigcup_{n\geq 1} S_n$ is a dense unital $*$-subalgebra of $B$.

Let $A\subset C_\infty$ be a separable \cstar-subalgebra.
Then we can find a family of bounded sequences of self-adjoint elements $\{ (a_n^{(k)})_n \mid k\geq 1\}$ such that the resulting sequence $a^{(k)}:=[(a_n^{(k)})_n]$ forms a dense subset of $A_\mathrm{sa}$.
Given $m\geq 1$, we apply property \ref{P.approx-central-embeddings.3} to $\varepsilon=\frac1m$, $G_m:=\{ a^{(k)}_m \mid k\leq m\}$ in place of $F$, and $S_m$ in place of $S$ in order to choose a $*$-linear map $\kappa_m: S_m^2 \to C$ such that we have for all $c\in G_m$ and $b, b_1, b_2\in S_m$ that
		\begin{itemize}
		\item $\|\kappa_m(b)\|\leq \frac{m+1}{m} \|b\|$.
		\item $\| [\kappa_m(b),c]\|\leq \frac1m \|b\|$.
		\item $\|(\kappa_m(b_1)\kappa_m(b_2)-\kappa_m(b_1b_2))c\|\leq \frac1m\|b_1\| \|b_2\|$.
		\item $\|(1-\kappa_m(1))c\|\leq \frac1m$.
		\end{itemize}
We consider the $*$-preserving map $\phi: B_0\to\ell^\infty(C)$ given by
\[
\phi(b)_n = \begin{cases} 0 &,\quad b\notin S_n \\ \kappa_n(b) &,\quad b\in S_n. \end{cases}
\]
For all pairs $b_1, b_2\in B_0$, there exists some $m\geq 1$ with $b_1, b_2\in S_m$.
Since each map $\kappa_\ell$ is linear, this means that the induced map $\bar{\phi}: B_0\to C_\infty$ is a linear contraction.
We have for all $b\in B_0$ and $k\geq 1$ that
\[
\| [\bar{\phi}(b), a^{(k)}]\| = \limsup_{n\to\infty} \|[\kappa_n(b), a^{(k)}_n]\|=0,
\]
and
\[
\|(1-\bar{\phi}(1))a^{(k)}\|=\limsup_{n\to\infty} \|(1-\kappa_n(1))a^{(k)}\|=0,
\]
and
\[
\|(\bar{\phi}(b_1)\bar{\phi}(b_2)-\bar{\phi}(b_1b_2))a^{(k)}\| = \limsup_{n\to\infty} \|(\kappa_n(b_1)\kappa_n(b_2)-\kappa_n(b_1b_2))a^{(k)}_n\| = 0.
\]
This implies that $\bar{\phi}$ takes values in $C_\infty\cap A'$ and its induced map 
\[
\psi: B_0\to (C_\infty\cap A')(C_\infty\cap A^\perp)
\] 
is a unital $*$-homomorphism.
By continuity, it follows that $\psi$ extends uniquely to a unital $*$-homomorphism $\psi: B\to (C_\infty\cap A')/(C_\infty\cap A^\perp)$.
Since $A$ was chosen arbitrarily, this shows the claim.
\end{proof}

\begin{remark}
Readers fluent in model theory will recognize \ref{P.approx-central-embeddings.4}$\Rightarrow$\ref{P.approx-central-embeddings.1} in the proof above as an easy implication.
To wit, consider a countable dense $*$-$\bbQ+i\bbQ$-subalgebra $B_0$ of $B$.
Then \ref{P.approx-central-embeddings.4} asserts that the quantifier-free type of $B_0$ is relatively consistent with the theory of $(C_\infty \cap A')/(C\cap A^\perp)$.
By a proof similar to that of \cite[Proposition~16.5.3]{Fa:STCstar}, the latter \cstar-algebra is countably quantifier-free saturated and the type is realized in it,  giving a unital isometric embedding of $B_0$.
Its continuous extension to~$B$ is the required embedding.
This framework underpins the proof of the corollary below.
\end{remark}

The following can be viewed as a generalization of \cite[Theorem 2.5.2(21)]{Muenster}.

\begin{corollary} \label{cor:axiomatizable}
Suppose that $B$ is a separable unital \cstar-algebra.
	\begin{enumerate}[label=\textup{(\arabic*)}]
	\item \label{1.cor:axiomatizable} The property of admitting large approximately central maps from $B$ is  axiomatizable in the language of \cstar-algebras.
	\item \label{2.cor:axiomatizable} Suppose that $B=\calD$ is a strongly self-absorbing \cstar-algebra.
	Let $C$ be an arbitrary \cstar-algebra.
	Then $C$ admits large approximately central maps from $\calD$ if and only if $C$ is separably $\calD$-stable.
	\end{enumerate}
\end{corollary}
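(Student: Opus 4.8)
The plan is to handle the two parts in turn, using the concrete characterization~\ref{P.approx-central-embeddings.4} of Proposition~\ref{P.approx-central-embeddings} for the first and a reflection argument for the second. For part~\ref{1.cor:axiomatizable}, the point is that condition~\ref{P.approx-central-embeddings.4} is already ``almost syntactic'' once the separable parameter algebra $B$ is fixed. First I would fix a countable dense $*$-$(\bbQ+i\bbQ)$-subalgebra $B_0\subseteq B$ together with an increasing sequence of finite-dimensional unital self-adjoint subspaces $1\in S_n=S_n^*\subset B_0$ with $S_n^2\subseteq S_{n+1}$ and $\bigcup_n S_n$ dense, exactly as in the proof of \ref{P.approx-central-embeddings.4}$\Rightarrow$\ref{P.approx-central-embeddings.1}. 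For each $n$ I would pick a finite self-adjoint linear basis of $S_n^2$; a $*$-linear map $\kappa\colon S_n^2\to C$ is then encoded by a tuple $\bar y$ of self-adjoint values, and every quantity occurring in \ref{P.approx-central-embeddings.4} (namely $\kappa(b)$, $\kappa(b_1b_2)$, $\kappa(b_1)\kappa(b_2)$, $\kappa(1)$ for $b,b_1,b_2\in S_n$) becomes a fixed scalar-linear or bilinear term in $\bar y$ whose coefficients are determined by $B$. Folding the four bulleted deviations into a single expression
\[
D_n(\bar c,\bar y)=\max\big((\|\kappa(b)\|-\|b\|)_+,\ \|[\kappa(b),c]\|,\ \|(\kappa(b_1)\kappa(b_2)-\kappa(b_1b_2))c\|,\ \|(1-\kappa(1))c\|\big),
\]
where the maximum runs over a finite net of the (compact) unit spheres of $S_n$ and over the coordinates of $\bar c$, and then setting $\psi_{n,k}:=\sup_{\bar c}\inf_{\bar y}D_n(\bar c,\bar y)$ with $\bar c$ ranging over $k$-tuples of self-adjoint contractions and $\bar y$ over a suitably bounded self-adjoint domain, produces a genuine sentence of continuous logic. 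I would then verify that $C$ admits large approximately central maps from $B$ if and only if $\psi_{n,k}^C=0$ for all $n,k$: the ``only if'' direction is immediate, while for ``if'' one passes from the sequence $(S_n)$ to an arbitrary finite-dimensional $S$ by a density/perturbation argument and absorbs the net error into the tolerance $\varepsilon$. Thus the property is axiomatized by $\{\psi_{n,k}=0\mid n,k\in\bbN\}$.

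For part~\ref{2.cor:axiomatizable}, both properties in play are axiomatizable: admitting large approximately central maps from $\calD$ by part~\ref{1.cor:axiomatizable}, and separable $\calD$-stability by the discussion following Proposition~\ref{Prop.ssa-clubs}. Since by the Downward L\"owenheim--Skolem theorem every \cstar-algebra has a separable elementary submodel with which it shares its full first-order theory, and membership in an axiomatizable class is an invariant of elementary equivalence, it suffices to verify the stated equivalence for \emph{separable} $C$; for such $C$ separable $\calD$-stability coincides with $C\cong C\otimes\calD$.

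So let $C$ be separable. If $C$ admits large approximately central maps from $\calD$, then applying condition~\ref{P.approx-central-embeddings.2} with $A=C$ yields a unital $*$-homomorphism $\calD\to (C_\infty\cap C')/(C_\infty\cap C^\perp)$, which is precisely condition~\ref{Thm.Cstar-McDuff:2} of Theorem~\ref{Thm.Cstar-McDuff}, whence $C\cong C\otimes\calD$. Conversely, if $C\cong C\otimes\calD$, the same McDuff criterion supplies a unital $*$-homomorphism $\calD\to (C_\infty\cap C')/(C_\infty\cap C^\perp)$; for any separable $A\subseteq C$ the inclusions $C_\infty\cap C'\subseteq C_\infty\cap A'$ and $C_\infty\cap C^\perp\subseteq C_\infty\cap A^\perp$ induce a connecting $*$-homomorphism $(C_\infty\cap C')/(C_\infty\cap C^\perp)\to (C_\infty\cap A')/(C_\infty\cap A^\perp)$, and composition gives the maps required by~\ref{P.approx-central-embeddings.2}. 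The one point needing care is that this connecting map is unital: both units are represented by quasicentral approximate units (of $C$ and of $A$ respectively), and these agree modulo $C_\infty\cap A^\perp$ because an approximate unit of $C$ also acts as one on the elements of $A$.

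I expect the main obstacle to be the bookkeeping in part~\ref{1.cor:axiomatizable}: ensuring that restricting to the fixed subspaces $S_n$ and to finite nets of their unit balls loses nothing (this is where the density of $\bigcup_n S_n$ and the freedom in $\varepsilon$ are used), and that the quantifier over $\bar y$ can be confined to a bounded self-adjoint domain without excluding any admissible $\kappa$. Everything in part~\ref{2.cor:axiomatizable} is then soft, modulo the routine approximate-unit computation establishing unitality of the connecting maps.
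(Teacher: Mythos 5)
Your part \ref{1.cor:axiomatizable} is correct and follows essentially the same route as the paper: both encode condition \ref{P.approx-central-embeddings.4} of Proposition~\ref{P.approx-central-embeddings} by a countable family of sup-inf sentences whose inner variables are the values of an approximately linear, approximately central map on a fixed countable dense substructure of $B$, with the usual compactness/perturbation bookkeeping to pass from approximately self-adjoint, approximately linear data to genuine $*$-linear maps. Likewise, your separable case of part \ref{2.cor:axiomatizable}, including the connecting $*$-homomorphism $(C_\infty\cap C')/(C_\infty\cap C^\perp)\to (C_\infty\cap A')/(C_\infty\cap A^\perp)$ and the approximate-unit check that it is unital, is sound.

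The genuine gap is in your reduction of part \ref{2.cor:axiomatizable} to the separable case. You invoke the axiomatizability of separable $\calD$-stability, citing the discussion following Proposition~\ref{Prop.ssa-clubs}. But that remark in the paper is an assertion whose justification is precisely Corollary~\ref{cor:axiomatizable} itself: the externally available result (\cite[Theorem~2.5.2(21)]{Muenster}) concerns separable \cstar-algebras only, and the paper explicitly frames this corollary as its generalization to arbitrary \cstar-algebras. So your argument is circular. Concretely, the direction that breaks is ``separably $\calD$-stable $\Rightarrow$ admits large approximately central maps from $\calD$'': given a separable elementary submodel $C'\prec C$, nothing you have proved lets you conclude that $C'$ is $\calD$-stable from separable $\calD$-stability of $C$; a single elementary submodel need not belong to the given cofinal family of $\calD$-stable subalgebras. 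The repair is the paper's club argument, which your proposal omits: by Proposition~\ref{Prop.ssa-clubs} the separable $\calD$-stable subalgebras of $C$ form a club in $\Sep(C)$ (this uses that $\calD$-stability is preserved under countable direct limits, i.e., $\sigma$-closedness, not mere cofinality), the separable elementary submodels form a club by L\"owenheim--Skolem, and by Lemma~\ref{Lem.clubs} the two clubs have nonempty intersection; any member of the intersection is separable, $\calD$-stable, and elementary in $C$, hence admits large approximately central maps from $\calD$ by your separable-case argument, and then so does $C$ by part \ref{1.cor:axiomatizable}. The forward direction, by contrast, needs no circular input and survives as you wrote it once unpacked: any separable subalgebra of $C$ extends to a separable elementary submodel, which inherits the approximately central maps property by part \ref{1.cor:axiomatizable} and is therefore $\calD$-stable by your separable-case argument via Theorem~\ref{Thm.Cstar-McDuff}.
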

\begin{proof}
	\ref{1.cor:axiomatizable}:
	Fix a countable dense $*$-$\bbQ+i\bbQ$-subalgebra $\{b_n\mid n\in \bbN\}$ of $B$ enumerated so that $b_0=1$ and $b_{2n+1}=b_{2n}^*$ for all $n$.
	For every $m\geq 1$ consider the following  sentence ($\bar x$ and $\bar y$ stand for $2m$-tuples of variables where $x_j$ range over the unit ball and $y_j$ is of the same sort as $b_j$, i.e., it ranges over the $n_j$-ball where~$n_j$ is the minimal natural number not smaller than~$\|b_j\|$). 
	\begin{align*}
		\varphi_m:=\sup_{\|\bar x\|\leq 1}\inf_{\bar y} \max\biggl\{&\max_{i,j<2m} \|[x_i,y_j]\|,\max_{j<m}\|y_{2j}^*-y_{2j+1}\|, \biggr.\\
		& \max\{\|y_j y_k-y_l\|\mid j,k,l<2m, b_j b_k=b_l\},\\
		\biggl.&\max\{\|y_j-\alpha y_k\|\mid j,k<2m,\alpha\in \bbQ+i\bbQ, b_j=\alpha b_k\}\biggr\}. 
	\end{align*}
	Then $\varphi_m$ has value $0$ in $C$ if and only if for every $2m$-element subset $F$ of its unit ball and every $\varepsilon>0$ there is a self-adjoint $\varepsilon$-approximately linear map from the linear span $S_m$ of $\{b_j\mid j<2m\}$ into $C$ such that the images of $b_j$, for $j<2m$ $\varepsilon$-commute with all elements of $F$.\footnote{An observant reader will notice that, because the inf is not necessarily attained, we are getting only an approximately self-adjoint map. A moment of thought shows that it can be easily perturbed to a self-adjoint map at the expense of increasing the $\varepsilon$.}
	By compactness of the unit ball of $S_m$, the map can be assumed to be linear.  

	 By Proposition~\ref{P.approx-central-embeddings}\ref{P.approx-central-embeddings.4}, the existence of such maps is equivalent to the \cstar-algebra $C$ admitting approximately large central maps from $B$. 
	 Therefore this property is axiomatized by the conditions $\varphi_m=0$, for all $m\geq 1$. 
	 
	\ref{2.cor:axiomatizable}:
First assume that $C$ admits large approximately central maps from~$\calD$.
Then every separable \cstar-subalgebra of $C$ is included in a separable elementary submodel $A$ of $C$.
Since then $A$ also admits large approximately central maps from $\calD$ by part \ref{1.cor:axiomatizable}, it satisfies the conditions in Proposition~\ref{P.approx-central-embeddings}.
By Theorem~\ref{Thm.Cstar-McDuff}, $A$ is $\calD$-absorbing. 

Conversely, assume that $C$ is separably $\calD$-absorbing.
Since the inductive limit of a sequence of separable, $\calD$-absorbing \cstar-algebras is $\calD$-absorbing, this implies that the $\calD$-absorbing \cstar-algebras form a club in $\Sep(C)$.
The intersection of this club with the club of elementary submodels of $C$ is a club by Lemma~\ref{Lem.clubs}.
By Theorem~\ref{Thm.Cstar-McDuff} and considering that the conditions in Proposition~\ref{P.approx-central-embeddings} hold for all members of such a club, it follows that $C$ admits large approximately central maps from $\calD$. 
\end{proof}

\begin{definition}\label{Def.B-saturated}
	Suppose that $B$ and $C$ are unital \cstar-algebras such that~$B$ is separable.
	We say that $C$ is \emph{$B$-saturated} if for every separable \cstar-subalgebra $A$ of $C$ there is a unital $*$-embedding from $B$ to $C\cap A'$. 
\end{definition}

If the algebra $B$ above is in fact a strongly self-absorbing \cstar-algebra~$\calD$, then examples of $\calD$-saturated algebras are the ultrapower $A_\cU$ and the sequence algebra $A_\infty$ of a unital $\calD$-absorbing \cstar-algebra~$A$ (see \cite{effros1978c}, \cite[Theorem~7.2.2]{Ror:Classification},  and \cite{FaHaRoTi:Relative}), path algebras of unital $\calD$-saturated \cstar-algebras (\cite[Corollary~5.3]{farah2023calkin}) and coronas of $\sigma$-unital $\calD$-stable \cstar-algebras as per Theorem~\ref{T.Q-saturated}.

The requirement that the *-homomorphism be unital is important in the above definition.
For example, it was proven in \cite[Theorem~B]{farah2023calkin} that $\cQ(H)$ is not $\cO_\infty$-saturated while Voiculescu's theorem easily implies that for every separable \cstar-sub\-algebra $C$ of $\cQ(H)$, the relative commutant $\cQ(H)\cap C'$ contains an isomorphic (but non-unital) copy of $\cO_\infty$. 

If a separable \cstar-algebra $M$ is $\calD$-stable and $A$ is a hereditary \cstar-subalgebra of $M$, then $A$ is $\calD$-stable (this is \cite[Corollary~3.1]{ToWi:Strongly}).

\begin{proposition} \label{P.A-in-M(A)} 
	Suppose that $\calD$ is strongly self-absorbing.
	If $A$ is a hereditary \cstar-subalgebra of a separably $\calD$-stable \cstar-algebra $C$, then $A$ is separably $\calD$-stable. 
	In particular, if $A$ is a \cstar-algebra and $\cM(A)$ is separably $\calD$-stable, then so is $A$.  
\end{proposition}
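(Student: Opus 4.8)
The plan is to reduce the whole statement to the definition of separable $\calD$-stability (Definition~\ref{Def.sep-D-stable}) together with the fact, recalled just before the proposition, that a hereditary \cstar-subalgebra of a \emph{separable} $\calD$-stable \cstar-algebra is again $\calD$-stable (\cite[Corollary~3.1]{ToWi:Strongly}). The ``in particular'' clause will follow immediately from the main assertion, since $A$ is a closed two-sided ideal of $\cM(A)$ and hence a hereditary \cstar-subalgebra; so I concentrate on the case of a general hereditary $A\subseteq C$.

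For this I would verify the defining cofinality condition directly, without invoking the club machinery. Fix an arbitrary separable \cstar-subalgebra $D_0\subseteq A$; the task is to find a separable $\calD$-stable \cstar-subalgebra of $A$ containing $D_0$. Regarding $D_0$ as a separable subalgebra of $C$ and using that $C$ is separably $\calD$-stable, I obtain a separable $\calD$-stable \cstar-subalgebra $C_0\subseteq C$ with $D_0\subseteq C_0$. The candidate is $A\cap C_0$: it is a separable \cstar-subalgebra of $A$, and it contains $D_0$ because $D_0\subseteq A$ and $D_0\subseteq C_0$.

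The only substantive point --- and the step I would regard as the crux --- is that $A\cap C_0$ is a hereditary \cstar-subalgebra of $C_0$. Once this is established, applying \cite[Corollary~3.1]{ToWi:Strongly} to the separable $\calD$-stable algebra $C_0$ shows that $A\cap C_0$ is $\calD$-stable, which completes the verification that the separable $\calD$-stable subalgebras of $A$ are cofinal in $\Sep(A)$, i.e., that $A$ is separably $\calD$-stable. To prove the heredity claim, take $a\in(A\cap C_0)^+$ and $d\in C_0^+$ with $d\leq a$. Then $d\in C^+$ because $C_0\subseteq C$, and since $A$ is hereditary in $C$ with $a\in A^+$ and $d\leq a$, heredity gives $d\in A$; as $d\in C_0$ as well, we get $d\in A\cap C_0$. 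I do not anticipate any genuine obstacle here: the heredity verification is a one-line order argument, and the passage from $\calD$-stability of $A\cap C_0$ to separable $\calD$-stability of $A$ is entirely formal, so no reflection argument (such as Lemma~\ref{Lem.club.restriction} or Proposition~\ref{Prop.ssa-clubs}) is required.
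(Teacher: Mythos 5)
Your proof is correct, and it takes a genuinely more elementary route than the paper's. The paper runs the same core idea (intersect $A$ with a separable $\calD$-stable subalgebra of $C$, note that the intersection is hereditary, and invoke \cite[Corollary~3.1]{ToWi:Strongly}) but packages it in the club framework: by Proposition~\ref{Prop.ssa-clubs} the separable $\calD$-stable subalgebras of $C$ form a club in $\Sep(C)$, and then the reflection Lemma~\ref{Lem.club.restriction} (which rests on Kueker's theorem) is used to conclude that $\{A\cap B\mid B\in\cC\}$ includes a club in $\Sep(A)$. You observe, correctly, that none of this machinery is needed: Definition~\ref{Def.sep-D-stable} only asks for \emph{cofinality}, and given $D_0\in\Sep(A)$ one gets $D_0\subseteq A\cap C_0$ for free once $C_0\supseteq D_0$ is chosen, so the heavy reflection lemma is bypassed entirely. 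Your heredity verification ($0\le d\le a$ with $a\in(A\cap C_0)^+$, $d\in C_0^+$ forces $d\in A$ by heredity of $A$ in $C$) is exactly right, as is the reduction of the ``in particular'' clause to the fact that closed two-sided ideals are hereditary. What the paper's formulation buys in exchange for the extra machinery is the slightly stronger structural statement that the intersections $A\cap B$ themselves include a club in $\Sep(A)$ (not merely a cofinal family); but since $\calD$-stability passes to direct limits, Proposition~\ref{Prop.ssa-clubs} recovers the club statement from your cofinality argument anyway, so nothing essential is lost in your version.
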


\begin{proof} 
Suppose that $C$ is separably $\calD$-stable.
By Proposition~\ref{Prop.ssa-clubs}, the collection $\cC$ of all separable $\calD$-stable \cstar-subalgebras in $C$ is a club.
	
Then the collection 
\(
\{ A\cap B \mid B\in\cC\}
\) 
includes a 
a club in $\Sep(A)$ by Lemma~\ref{Lem.club.restriction}. 
As pointed out above, it follows that $A\cap B$ is $\calD$-stable for all $B\in\cC$, being a separable hereditary subalgebra of a $\calD$-stable \cstar-algebra.
Hence $A$ is separably $\calD$-absorbing. 
\end{proof}

\begin{proposition} \label{P.sep-D-is-stable} 
	Suppose that $A$ is a \cstar-algebra and $\calD$ is strongly self-absorbing.
	\begin{enumerate}[label=\textup{(\arabic*)}]
	\item If $A$ is separably $\calD$-stable, then it follows for every \cstar-algebra $B$ that $A\otimes_{\max} B$ is separably $\calD$-stable. \label{P.sep-D-is-stable:1}
	\item $A$ is separably $\calD$-stable if and only if $\cK\otimes A$ is separably $\calD$-stable. \label{P.sep-D-is-stable:2}
	\end{enumerate}
\end{proposition}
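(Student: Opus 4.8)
The plan is to prove part~\ref{P.sep-D-is-stable:1} directly via the map-theoretic description of separable $\calD$-stability, and then to obtain part~\ref{P.sep-D-is-stable:2} from it together with the permanence results already at hand. By Corollary~\ref{cor:axiomatizable}\ref{2.cor:axiomatizable}, separable $\calD$-stability of a \cstar-algebra is the same as admitting large approximately central maps from $\calD$, so I would work throughout with condition~\ref{P.approx-central-embeddings.4} of Proposition~\ref{P.approx-central-embeddings}. Its purely local nature is exactly what makes it insensitive to the failure of the maximal tensor product to preserve inclusions; this failure is what obstructs the naive club argument of enlarging a separable $C_0\subseteq A\otimes_{\max}B$ to $A_1\otimes_{\max}B_0$ with $A_1\subseteq A$ separable and $\calD$-stable, since the image of $A_1\otimes_{\max}B_0$ in $A\otimes_{\max}B$ could be a proper quotient (and $\calD$-stability need not be detectable there).

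For part~\ref{P.sep-D-is-stable:1}, I would verify condition~\ref{P.approx-central-embeddings.4} for $C:=A\otimes_{\max}B$. Given $\varepsilon>0$, a self-adjoint finite set $F\subset A\otimes_{\max}B$, and a finite-dimensional unital self-adjoint subspace $S\subset\calD$, first approximate each element of $F$ up to some $\varepsilon'$ by a self-adjoint element of the algebraic tensor product $A\odot B$; writing these as finite sums $\sum_i a_i\otimes b_i$ isolates a self-adjoint finite set $F_A\subset A$ of left-hand factors and a separable subalgebra $B_0\subseteq B$ generated by the right-hand factors. As $A$ admits large approximately central maps from $\calD$, applying~\ref{P.approx-central-embeddings.4} inside $A$ to the triple $(\varepsilon'',F_A,S)$ yields a $*$-linear map $\kappa_A\colon S^2\to A$ with the four defining estimates relative to $F_A$. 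The map I would use for $C$ is
\[
\kappa\colon S^2\to A\otimes_{\max}B,\qquad \kappa(s)=\kappa_A(s)\otimes u_\mu,
\]
where $(u_\mu)$ is an approximate unit for $B_0$ and $\mu$ is chosen large at the very end. Self-adjointness of $\kappa$ is immediate from $*$-linearity of $\kappa_A$ and $u_\mu=u_\mu^*$, and the norm estimate follows from $\|x\otimes y\|_{\max}\le\|x\|\,\|y\|$.

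The verification is where the (modest) real work lies, and it rests on two elementary facts about the maximal norm: $\|x\otimes y\|_{\max}\le\|x\|\,\|y\|$ on elementary tensors, and $u_\mu b\to b$, $bu_\mu\to b$ for each right-hand factor $b$. Expanding $[\kappa(s),\sum_i a_i\otimes b_i]$, then $(\kappa(s_1)\kappa(s_2)-\kappa(s_1s_2))\sum_i a_i\otimes b_i$, and finally $(1-\kappa(1))\sum_i a_i\otimes b_i$, the approximate-unit relations let me replace every occurrence of $u_\mu b_i$, $b_iu_\mu$, and $u_\mu^2 b_i$ by $b_i$ at a cost that vanishes as $\mu\to\infty$; what remains is a finite sum of terms (the corresponding defect of $\kappa_A$ against $a_i$)$\,\otimes\,b_i$, whose norm is bounded by the $\kappa_A$-estimates times $\sum_i\|b_i\|$. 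The main obstacle to watch is the error bookkeeping: the decomposition (hence the number of summands and $\sum_i\|b_i\|$) is fixed before $\varepsilon''$ and $\mu$ are chosen, so one selects $\varepsilon'$ and $\varepsilon''$ small and $\mu$ large to force all four estimates within $\varepsilon$ for the original $F$. This gives~\ref{P.approx-central-embeddings.4} for $A\otimes_{\max}B$, whence part~\ref{P.sep-D-is-stable:1} by Corollary~\ref{cor:axiomatizable}\ref{2.cor:axiomatizable}.

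For part~\ref{P.sep-D-is-stable:2}, the forward implication is simply the special case $B=\cK$ of part~\ref{P.sep-D-is-stable:1}, using that $\cK$ is nuclear so that $A\otimes_{\max}\cK=\cK\otimes A$. For the converse, I would realize $A$ as the hereditary corner $(e_{11}\otimes 1)(\cK\otimes A)(e_{11}\otimes 1)=e_{11}\otimes A\cong A$ cut down by a rank-one projection $e_{11}\in\cK$, regarding $e_{11}\otimes 1$ as a projection in $\cM(\cK\otimes A)$. Assuming $\cK\otimes A$ is separably $\calD$-stable, Proposition~\ref{P.A-in-M(A)} applied to this hereditary subalgebra shows $e_{11}\otimes A$, and hence $A$, is separably $\calD$-stable. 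Everything beyond the error estimates in part~\ref{P.sep-D-is-stable:1} is formal once the local characterization is in place.
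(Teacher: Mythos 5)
Your proof is correct, and for part~\ref{P.sep-D-is-stable:1} it takes a genuinely different route from the paper. The paper's proof is the very club argument you dismiss: for a club $\cC$ of separable $\calD$-stable subalgebras of $A$, the images of the canonical maps $C\otimes_{\max}B_0\to A\otimes_{\max}B$ (for $C\in\cC$, $B_0\in\Sep(B)$) form a club in $\Sep(A\otimes_{\max}B)$, and each such image is $\calD$-stable because it is a \emph{quotient} of the separable $\calD$-stable algebra $C\otimes_{\max}B_0$. Here lies your one misconception: you assert that $\calD$-stability ``need not be detectable'' in a proper quotient image, but $\calD$-stability of separable \cstar-algebras passes to quotients (a standard permanence property from \cite{ToWi:Strongly}, invoked explicitly elsewhere in the paper, e.g.\ in the proof of Lemma~\ref{L.extensions} and in the remark following the proposition itself), so the failure of $\otimes_{\max}$ to preserve inclusions is harmless and the naive club argument goes through. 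That said, your alternative is perfectly sound: verifying condition~\ref{P.approx-central-embeddings.4} of Proposition~\ref{P.approx-central-embeddings} for $A\otimes_{\max}B$ via elementary-tensor approximation, the map $\kappa(s)=\kappa_A(s)\otimes u_\mu$, and the error bookkeeping you describe (fix the decomposition and $\sum_i\|b_i\|$ first, then shrink $\varepsilon''$ and enlarge $\mu$) does yield the four estimates, and Corollary~\ref{cor:axiomatizable}\ref{2.cor:axiomatizable} converts this back into separable $\calD$-stability. What each approach buys: the paper's argument is shorter and leans on established permanence properties (quotients, inductive limits, clubs), while yours is more self-contained at the level of estimates, never needs quotient-permanence, and directly exhibits the approximately central maps, illustrating that the ``large approximately central maps'' characterization is robust under maximal tensor products. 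Your part~\ref{P.sep-D-is-stable:2} — the case $B=\cK$ for the forward direction and Proposition~\ref{P.A-in-M(A)} applied to the hereditary corner $e_{11}\otimes A$ for the converse — is exactly the paper's argument.
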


\begin{remark*}
Regarding statement \ref{P.sep-D-is-stable:1} above, notice that it also follows for any \cstar-algebra $B$ and any \cstar-crossnorm $\gamma$ on $A\odot B$ that $A\otimes_\gamma B$ is separably $\calD$-stable.
This is because such \cstar-algebras arise as quotients of $A\otimes_{\max} B$ and separable $\calD$-stability passes to quotients.
\end{remark*}

\begin{proof}[Proof of Proposition~\ref{P.sep-D-is-stable}]
\ref{P.sep-D-is-stable:1}:
For \cstar-subalgebras $B_0$ in $ \Sep(B)$ and $C$ in $\Sep(A)$, we denote (ad-hoc) by $C\otimes_{\bullet} B$ the image of the unique $*$-ho\-mo\-mor\-phism $C\otimes_{\max} B_0\to A\otimes_{\max} B$ given by $\pi(c\otimes b)=c\otimes b$ for all $c\in C$ and $b_0\in B$.\footnote{We caution the reader that for certain choices of the involved algebras this map is not necessarily an inclusion.}
Notice that if $\cC\subseteq\Sep(A)$ is any club, then the collection
\[
\{ C\otimes_{\bullet} B_0 \mid C\in\cC,\ B_0\in\Sep(B) \}
\]
is also a club in $A\otimes_{\max} B$.
So if $A$ is separably $\calD$-stable, then we can choose a club consisting of $\calD$-absorbing \cstar-algebras, leading to $A\otimes_{\max} B$ being separably $\calD$-stable.

\ref{P.sep-D-is-stable:2}:
This is a direct consequence of Proposition~\ref{P.A-in-M(A)} and part \ref{P.sep-D-is-stable:1} applied to $B=\cK$.
\end{proof}

%The following is a consequence of  \cite[Theorem 4.3]{ToWi:Strongly} and standard combinatorics of closed unbounded sets. 

\begin{lemma} \label{L.extensions}
Let $\calD$ be a strongly self-absorbing \cstar-algebra.
Suppose $A$ is a \cstar-algebra with a closed two-sided ideal $I\subseteq A$.
Then $A$ is separably $\calD$-stable if and only if $I$ and $A/I$ are separably $\calD$-stable.
\end{lemma}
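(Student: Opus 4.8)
The plan is to handle the two directions separately, with the forward direction a quick consequence of the permanence properties already available, and the reverse direction a reflection argument layered on top of the known extension theorem for separable $\calD$-stability. For the forward direction, assume $A$ is separably $\calD$-stable. Since a closed two-sided ideal is in particular a hereditary \cstar-subalgebra, Proposition~\ref{P.A-in-M(A)} immediately yields that $I$ is separably $\calD$-stable. For the quotient, I would use that separable $\calD$-stability passes to quotients (as recorded in the remark following Proposition~\ref{P.sep-D-is-stable}): any separable $C_0\subseteq A/I$ lifts to a separable subalgebra of $A$, which can be enlarged inside $A$ to a separable $\calD$-stable subalgebra $D$; then $q(D)\supseteq C_0$ is separable and, being a quotient of a genuinely $\calD$-stable separable algebra, is itself $\calD$-stable. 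Hence $A/I$ is separably $\calD$-stable.

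For the reverse direction, assume $I$ and $A/I$ are separably $\calD$-stable and write $q\colon A\to A/I$ for the quotient map. By Proposition~\ref{Prop.ssa-clubs} it suffices to show that every separable $C_0\subseteq A$ is contained in a separable $\calD$-stable subalgebra of $A$. The idea is to produce a separable $C\supseteq C_0$ for which \emph{both} $C\cap I$ and $q(C)\cong C/(C\cap I)$ are $\calD$-stable, and then to invoke the extension theorem for separable \cstar-algebras: an extension of two $\calD$-stable separable \cstar-algebras is $\calD$-stable (see, e.g., \cite{ToWi:Strongly}). Since the sequence $0\to C\cap I\to C\to q(C)\to 0$ is exact with $\calD$-stable ends, this gives that $C$ is $\calD$-stable, as required.

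To build such $C$ via reflection, apply Proposition~\ref{Prop.ssa-clubs} to $I$ and to $A/I$, so that the $\calD$-stable separable subalgebras $\cC_I\subseteq\Sep(I)$ and $\cC_{A/I}\subseteq\Sep(A/I)$ are clubs. I would then consider $\cC^\ast:=\{C\in\Sep(A)\mid C\cap I\in\cC_I\text{ and }q(C)\in\cC_{A/I}\}$ and show it contains a club. The assignments $C\mapsto C\cap I$ and $C\mapsto q(C)$ are monotone and $\sigma$-continuous: for $q$ this follows from continuity and surjectivity, while for intersection with $I$ the key observation is that if $a\in C_n$ is positive with $\|q(a)\|<\e$ then $(a-\e)_+\in C_n\cap I$, which permits approximating any element of $(\overline{\bigcup_n C_n})\cap I$ from within $\bigcup_n(C_n\cap I)$. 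Combined with the $\sigma$-closedness of $\cC_I$ and $\cC_{A/I}$, this makes $\cC^\ast$ $\sigma$-closed; cofinality follows from a routine back-and-forth in which one alternately enlarges $q(C_n)$ inside the club $\cC_{A/I}$ (then lifts the enlargement back to $A$) and enlarges $C_n\cap I$ inside the club $\cC_I$. Hence $\cC^\ast$ is a club, and by the previous paragraph every $C\in\cC^\ast$ is $\calD$-stable; cofinality of $\cC^\ast$ then gives that $A$ is separably $\calD$-stable.

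The genuinely nontrivial ingredient is the separable extension theorem invoked in the second paragraph; everything else is bookkeeping with clubs. I expect this to be the main obstacle precisely because one must glue together the relatively central sequences of copies of $\calD$ coming from $I$ and from $A/I$ across a sequence that need not split, and $\calD$ is not semiprojective, so the lift of the quotient data is the delicate step — this is why I would treat it as an external input rather than reprove it here.
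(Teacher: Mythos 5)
Your proof is correct and takes essentially the same approach as the paper: the forward direction via Proposition~\ref{P.A-in-M(A)} (an ideal is hereditary) together with passage to quotients, and the reverse direction by intersecting the two clubs $\{C\in\Sep(A)\mid C\cap I\text{ is }\calD\text{-stable}\}$ and $\{C\in\Sep(A)\mid q(C)\text{ is }\calD\text{-stable}\}$ and then applying the Toms--Winter extension theorem \cite[Theorem~4.3]{ToWi:Strongly} to each member of the intersection. The only difference is cosmetic: you spell out the club verifications ($\sigma$-continuity via the $(a-\e)_+$ trick and cofinality via back-and-forth) that the paper dismisses as ``not difficult to see.''
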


\begin{proof}
Let $\pi\colon A\to A/I$ denote the quotient map.
Let us first briefly argue the ``only if'' part, so assume $A$ is separably $\calD$-stable.
As an ideal,~$I$ is also hereditary and thus  separably $\calD$-stable as a consequence of Proposition~\ref{P.A-in-M(A)}.
Furthermore, showing that $A/I$ is separably $\calD$-stable is a trivial exercise, given that quotients of separable $\calD$-stable \cstar-algebras are $\calD$-stable.
	
For the ``if'' part, assume both $I$ and $A/I$ are separably $\calD$-stable.
If $\cC\subseteq \Sep(A/I)$ is a club it is then not difficult to see that $\{B\in \Sep(A)\mid \pi(B)\in \cC\}$ is a club in $\Sep(A)$.
Similarly, if $\cC\subseteq \Sep(I)$ is a club, then $\{B\in \Sep(A)\mid B\cap I\in \cC\}$ is a club.
Let 
	\begin{align*}
		\cC_0=\{B\in \Sep(A)&\mid \text{  $\pi(B)$ is $\calD$-stable}\},\\
		\cC_1=\{B\in \Sep(A)& \mid  \text{ $B\cap I$ is $\calD$-stable}\}.
		%		&\text{and it includes an approximate unit for $I$}\}.
	\end{align*} 
By the assumptions and Proposition~\ref{Prop.ssa-clubs}, each one of $\cC_0$ and $\cC_1$ is a club, and $\cC_0\cap \cC_1$ is then a club by Lemma~\ref{Lem.clubs}.
	By \cite[Theorem~4.3]{ToWi:Strongly}, every $B\in \cC_0\cap \cC_1$ is $\calD$-absorbing, hence $A$ is separably $\calD$-stable by Proposition~\ref{Prop.ssa-clubs}. 
\end{proof}

\begin{corollary} \label{C.M(A)locally-tensorially-D-absorbing}
Let $A$ be any \cstar-algebra.
Assume that $\calD$ is strongly self-absorbing and that both $A$ and $\cQ(A)$ are separably $\calD$-stable.
Then $\cM(A)$ is separably $\calD$-stable. 
\end{corollary}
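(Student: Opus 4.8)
The plan is to recognize this as an immediate instance of Lemma~\ref{L.extensions}. Recall that for any \cstar-algebra $A$, the algebra $A$ sits inside its multiplier algebra $\cM(A)$ as a closed two-sided ideal, and the associated quotient is by definition the corona, $\cQ(A)=\cM(A)/A$. Thus we are precisely in the situation of an extension
\[
0\to A\to \cM(A)\to \cQ(A)\to 0,
\]
in which the ideal and the quotient are exactly the two algebras appearing in the hypothesis of the corollary.

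By assumption, both $A$ and $\cQ(A)$ are separably $\calD$-stable. The ``if'' direction of Lemma~\ref{L.extensions} asserts that whenever both the ideal $I$ and the quotient $A/I$ of a \cstar-algebra are separably $\calD$-stable, then so is the ambient algebra. Applying this with ideal $I=A$ and ambient algebra $\cM(A)$ yields directly that $\cM(A)$ is separably $\calD$-stable, which is the desired conclusion.

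There is essentially no obstacle here: the corollary is a direct specialization of Lemma~\ref{L.extensions} to the canonical ideal $A\trianglelefteq\cM(A)$, once one observes that the quotient is the corona. The only points worth recording explicitly are that the hypotheses of the lemma are met verbatim, namely that $A$ is genuinely a closed two-sided ideal of $\cM(A)$ (a standard fact about multiplier algebras) and that $\cQ(A)$ is exactly the quotient $\cM(A)/A$ (the definition of the corona). No separate argument involving clubs, reflection, or the McDuff-type criterion is required, since all of that machinery is already packaged inside the proof of Lemma~\ref{L.extensions}.
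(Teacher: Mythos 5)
Your proposal is correct and is exactly the paper's intended argument: the corollary is stated immediately after Lemma~\ref{L.extensions} with no separate proof, precisely because it follows from applying the ``if'' direction of that lemma to the extension $0\to A\to\cM(A)\to\cQ(A)\to 0$. Nothing further is needed.
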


\section{On $\calD$-saturation of coronas} \label{S.D-saturation}

For proving the main result of this section we will need the existence of quasi-central approximate units, which were introduced in \cite{Arv:Notes}.
We first record a few basic facts about them.   
 If $E$ is an  ideal in a \cstar-algebra~$C$ and $X$ is a subset of~$C$, then an approximate unit $(e_\lambda)$ for $E$ is said to be \emph{$X$-quasicentral} if $\|[e_\lambda,x]\|\to 0$ for every $x\in X$ (see e.g., \cite[\S 1.9]{Fa:STCstar}). We are interested in the case when $E$ is $\sigma$-unital and the approximate unit is countable. In this situation, a sequential  $X$-quasicentral approximate unit can be chosen for every separable subset $X$. 
 We will need the following more precise statement.
 
\begin{lemma}\label{L.qc} Suppose that $E$ is a $\sigma$-unital  \cstar-algebra, $X$ is a countable subset of $C$, and $\e_n>0$, for $n\in \bbN$, are such that $\lim_n \e_n=0$. Then there is an approximate unit $e_n$, for $n\in \bbN$, of $E$ such that the following holds. 
	\begin{enumerate}[label=\textup{(\arabic*)}]
		\item $e_0=0$ and $e_{n+1} e_n=e_n$ for all $n$. 
		\item With 
		\[
		f_n=(e_{n+1}-e_n)^{1/2} 
		\]
		the series $\sum_n f_n^2$ strictly converges to $1_{\cM(E)}$. 
		\item  $f_m f_n=0$ whenever $|m-n|\geq 2$. 
		\item For every $x\in X$ and all large enough $n$ we have  $\|[f_n,x]\|<\e_n$. 
	\end{enumerate} 
\end{lemma}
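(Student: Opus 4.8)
The plan is to build $(e_n)$ recursively and to notice that conditions (2) and (3) come for free once $(e_n)$ is an \emph{exactly nested} approximate unit, so that all the genuine work concerns quasicentrality and, above all, the commutators of the square roots $f_n$.

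First I would dispose of (2) and (3). For (2), the partial sums telescope: $\sum_{n=0}^N f_n^2=\sum_{n=0}^N(e_{n+1}-e_n)=e_{N+1}-e_0=e_{N+1}$, and since $(e_n)$ is an approximate unit with $e_0=0$ the element $e_{N+1}$ converges strictly to $1_{\cM(E)}$, giving strict convergence of $\sum_n f_n^2$ to $1_{\cM(E)}$. For (3), the relation $e_{n+1}e_n=e_n$ propagates by induction on $j-i$ to $e_j e_i=e_i=e_i e_j$ for all $j>i$ (self-adjointness gives the second equality, and the inductive step uses $e_{j+1}e_i=e_{j+1}(e_j e_i)=(e_{j+1}e_j)e_i=e_j e_i=e_i$). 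Hence for $m\geq n+2$ each of $m+1,m$ dominates each of $n+1,n$, so the expansion of $(e_{m+1}-e_m)(e_{n+1}-e_n)$ collapses to $e_{n+1}-e_n-e_{n+1}+e_n=0$; the symmetric computation covers $m\leq n-2$. Writing $a=e_{m+1}-e_m$ and $b=e_{n+1}-e_n$, from $ab=0$ for positive $a,b$ one deduces $a^{1/2}b^{1/2}=0$ (indeed $\|ab^{1/2}\|^2=\|aba\|=0$ and then $\|a^{1/2}b^{1/2}\|^2=\|a^{1/2}ba^{1/2}\|=0$), i.e. $f_m f_n=0$ whenever $|m-n|\geq 2$.

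Next I would produce the underlying approximate unit. Enumerate $X=\{x_1,x_2,\dots\}$ and start from a sequential $X$-quasicentral approximate unit of the $\sigma$-unital algebra $E$, which exists as recalled above. A recursion then yields an increasing sequential approximate unit $(e_n)_{n\geq 0}$ with $e_0=0$, $0\leq e_n\leq 1$ and the exact nesting $e_{n+1}e_n=e_n$, while keeping the commutators with $X$ below any tolerance prescribed in advance: at stage $n$ one passes far enough along the quasicentral approximate unit to almost fix $e_n$, to absorb the next member of a fixed dense sequence of $E$, and to commute with $x_1,\dots,x_n$ within some $\delta_n$, and then upgrades ``almost fixing'' to the exact relation $e_{n+1}e_n=e_n$ by the standard functional-calculus correction. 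This is the classical construction of a nested quasicentral approximate unit; the only feature I keep track of is the decreasing rate $\delta_n\downarrow 0$, which may be made arbitrarily small at each stage because only the finitely many $x_1,\dots,x_n$ and the fixed target $\e_{n+1}$ enter the constraint.

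The main obstacle is condition (4), since it concerns $f_n=(e_{n+1}-e_n)^{1/2}$ rather than the $e_n$ themselves, and the square root does not interact well with commutators. I would resolve this by the functional-calculus propagation principle: for $h(t)=t^{1/2}$ with $h(0)=0$ and each fixed $x$ there is a modulus $\omega_x$ with $\omega_x(\gamma)\to 0$ as $\gamma\to 0^+$, depending only on $\|x\|$, such that $\|[g^{1/2},x]\|\leq\omega_x(\|[g,x]\|)$ for every positive contraction $g$. Indeed, approximating $h$ uniformly by a polynomial $p=\sum_{k\geq 1}c_k t^k$ gives $\|[h(g),x]-[p(g),x]\|\leq 2\|x\|\,\|h-p\|_\infty$, while $\|[p(g),x]\|\leq L_p\|[g,x]\|$ with $L_p=\sum_k k|c_k|$ (from $\|[g^k,x]\|\leq k\|[g,x]\|$ for $\|g\|\leq 1$), and optimizing over $p$ produces the modulus. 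Applying this with $g=e_{n+1}-e_n$ and using $\|[e_{n+1}-e_n,x_i]\|\leq\|[e_{n+1},x_i]\|+\|[e_n,x_i]\|\leq\delta_n+\delta_{n-1}\leq 2\delta_{n-1}$ for $i\leq n-1$, I would choose the tolerances so that $\omega_{x_i}(2\delta_{n-1})<\e_n$ for all $i\leq n-1$ (finitely many conditions, each attainable as $\delta_{n-1}\to 0$ since $\e_n>0$ is fixed). Then for every fixed $x_i\in X$ and all $n>i$ we obtain $\|[f_n,x_i]\|\leq\omega_{x_i}(\|[e_{n+1}-e_n,x_i]\|)<\e_n$, which is exactly (4).
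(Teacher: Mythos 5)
Your proof follows essentially the same route as the paper's: reduce everything to an approximate unit of $E$ that is $X$-quasicentral and exactly nested ($e_{n+1}e_n=e_n$), observe that (1)--(3) are then pure algebra (telescoping plus the standard fact that $ab=0$ for positive $a,b$ forces $a^{1/2}b^{1/2}=0$), and obtain (4) from a commutator modulus for the square-root function. The paper obtains the two nontrivial ingredients by citation --- \cite[Proposition~1.9.3]{Fa:STCstar} for the nested quasicentral approximate unit and \cite[Lemma~1.4.8]{Fa:STCstar} for the modulus (your polynomial-approximation argument is precisely a proof of that lemma) --- and then passes to a subsequence to arrange $\|[e_n,x_j]\|<\delta_n/2$ for $j\leq n$, rather than wiring the rates into the recursion as you do; both ways of arranging the rates work, since nesting propagates to subsequences via $e_me_n=e_n$ for $m>n$, which you prove.

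One caveat deserves mention. Your parenthetical sketch of the nested quasicentral construction (pick a member of a quasicentral approximate unit that almost fixes $e_n$, then ``upgrade'' to the exact relation $e_{n+1}e_n=e_n$ by a functional-calculus correction) fails if taken literally: no correction applied to the new element alone can produce exact nesting unless $e_n$ was already arranged to admit a unit inside $E$. Indeed, in $E=C_0((0,1])$ with $e_n(t)=t$ there is no positive contraction $e\in E$ with $ee_n=e_n$ at all, no matter how well some $u$ satisfies $\|ue_n-e_n\|<\delta$. The genuine construction must therefore carry the extra recursion invariant that each $e_n$ is ``compactly supported'' (there exists $c\in E$ with $ce_n=e_n$), which is exactly what the classical result cited by the paper provides. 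Since you explicitly defer to that classical construction, this is a presentational looseness rather than a fatal gap, but it is the one step of your write-up that is not self-contained. A second, purely cosmetic point: in deducing $f_mf_n=0$, the correct identity is $\|a^{1/2}b^{1/2}\|^2=\|b^{1/2}ab^{1/2}\|=\|b^{1/2}(ab^{1/2})\|=0$; the quantity $\|a^{1/2}ba^{1/2}\|$ you invoke equals $\|a^{1/2}b^{1/2}\|^2$ itself, so that line as written is circular, though the fact is standard.
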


\begin{proof} The assertion is trivial when $E$ is unital (take $e_n=1$ for all $n$), and we may assume $E$ is not unital.  Enumerate $X$ as $\{x_n\mid n\in \bbN\}$. 
Fix an approximate unit $e_n$, for $n\in \bbN$, in $E$ which is $E\cup \{x_n\mid n\in \bbN\}$-quasicentral and such that $e_{n+1}e_n=e_n$ for all $n$ (\cite[Proposition~1.9.3]{Fa:STCstar}). Note that this sequence does not necessarily satisfy the requirements. In order to ensure that all required conditions hold we will have to pass to a subsequence of this approximate unit, also denoted $(e_n)$.  We may assume that $e_0=0$.  By \cite[Lemma 1.4.8]{Fa:STCstar} for each $n$ there is $\delta_n>0$  such that for contractions $a\geq 0$ and $x$ we have that $\|[a,x]\|<\delta_n$ implies $\|[a^{1/2},x]\|<\e_n$. By going to a subsequence we may assume that $\|[e_n,x_j]\|<\delta_n/2$ for all $j\leq n$. Let, for $n\geq 0$, 
\[
f_n=(e_{n+1}-e_n)^{1/2}. 
\]
Clearly the series $\sum_n f_n^2$ strictly converges to $1_{\cM(E)}$. 
Also, if $|m-n|\geq 2$ then $f_m^2 f_n^2=0$, and by applying continuous functional calculus  one obtains $f_m f_n=0$. Also $\|[f_n,x_j]\|<\varepsilon_n$ for all $j\leq n$, as required. 
\end{proof}

\begin{lemma}\label{L.fn}
	Suppose that $f_n$, for $n\in \bbN$, is a sequence of positive contractions in a multiplier algebra $\cM(E)$ such that the series $\sum_n f_n^2$ strictly converges to $1_{\cM(E)}$ and one has $f_nf_m=0$ whenever $|n-m|\geq 2$. 
	
	Then for every $Y\subseteq \bbN$ the function $\Psi_Y\colon \ell_\infty(\cM(E))\to \cM(E)$ defined by 
	\[
	\Psi_Y((a_n))=\sum_{n\in Y} f_n a_n f_n
	\]
	is completely positive and contractive. 
\end{lemma}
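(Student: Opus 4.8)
The plan is to exhibit $\Psi_Y$ as a composition of two manifestly completely positive contractive maps by passing to Hilbert modules. Throughout, I would identify $\cM(E)$ with the \cstar-algebra $\calL(E)$ of adjointable operators on $E$, viewed as a right Hilbert $E$-module via $\langle a,b\rangle=a^*b$, and let $\calH_Y=\bigoplus_{n\in Y}E$ denote the standard (external direct sum) Hilbert $E$-module.

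First I would construct an adjointable contraction $V\colon E\to\calH_Y$ by $V\xi=(f_n\xi)_{n\in Y}$ and verify that it is adjointable with $V^*\big((\xi_n)_n\big)=\sum_{n\in Y}f_n\xi_n$. Both the containment $V\xi\in\calH_Y$ and the convergence of the series defining $V^*$ rest on the Cauchy--Schwarz inequality in Hilbert modules together with the strict convergence $\sum_n f_n^2=1$: for finite $G\subseteq Y$ one has $\big\|\sum_{n\in G}f_n\xi_n\big\|^2\le\big\|\sum_{n\in G}f_n^2\big\|\,\big\|\sum_{n\in G}\xi_n^*\xi_n\big\|$, whose second factor is a tail of a convergent series and whose first factor is at most $1$. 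The identity $\langle V\xi,(\eta_n)_n\rangle=\xi^*\sum_n f_n\eta_n=\langle\xi,V^*(\eta_n)_n\rangle$ confirms adjointability, and $V^*V=\sum_{n\in Y}f_n^2\le 1$, so $\|V\|\le 1$.

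Next, for $(a_n)_n\in\ell_\infty(\cM(E))=\ell_\infty(\calL(E))$ I would let $D=\mathrm{diag}(a_n)_n$ act coordinatewise on $\calH_Y$; this is adjointable with $\|D\|=\sup_n\|a_n\|$, and the assignment $(a_n)_n\mapsto D$ is a unital $*$-homomorphism $\ell_\infty(\calL(E))\to\calL(\calH_Y)$, hence completely positive and contractive. A direct computation gives $V^*DV\xi=\sum_{n\in Y}f_n a_n f_n\,\xi$, which simultaneously shows that the series $\sum_{n\in Y}f_n a_n f_n$ converges strictly and that $\Psi_Y\big((a_n)_n\big)=V^*DV\in\calL(E)=\cM(E)$. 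Finally, the compression $T\mapsto V^*TV$ on $\calL(\calH_Y)$ is completely positive (if $T\ge 0$ then $V^*TV=(T^{1/2}V)^*(T^{1/2}V)\ge 0$, and the same argument applied with $V$ amplified diagonally handles all matrix levels) and contractive, since $\|V^*TV\|\le\|V\|^2\|T\|\le\|T\|$. As $\Psi_Y$ is the composition of this compression with the diagonal $*$-homomorphism, it is completely positive and contractive.

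I expect the only delicate point to be the first step: checking that $V$ is a genuinely well-defined adjointable operator. This is where the strict convergence hypothesis $\sum_n f_n^2=1$ and the Hilbert-module Cauchy--Schwarz inequality do the real work, and it is precisely here that the a priori merely formal series $\sum_{n\in Y}f_n a_n f_n$ acquires its rigorous meaning as a strictly convergent sum; once $V$ is in hand, complete positivity and contractivity of $\Psi_Y$ are formal consequences of its factorization.
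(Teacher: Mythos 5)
Your proof is correct, but it takes a genuinely different route from the paper's. The paper argues term by term: it cites a reference for the strict convergence of $\sum_{n\in Y} f_n a_n f_n$, notes that $\Psi_Y$ is completely positive because each summand $(a_n)\mapsto f_n a_n f_n$ is, and then deduces contractivity from $\Psi_Y(1)=\sum_{n\in Y}f_n^2\le 1_{\cM(E)}$ (a completely positive map on a unital \cstar-algebra attains its norm at the unit). You instead exhibit an explicit Stinespring-type factorization $\Psi_Y=V^*\,\mathrm{diag}(\cdot)\,V$ through $\calL_E\bigl(\bigoplus_{n\in Y}E\bigr)$, where the diagonal embedding $\ell_\infty(\cM(E))\to\calL_E(\calH_Y)$ is a unital $*$-homomorphism and $V$ is a contraction with $V^*V=\sum_{n\in Y}f_n^2$. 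What your approach buys is self-containedness and economy: well-definedness, complete positivity, and contractivity all drop out of the single factorization, with the only real work being the adjointability of $V$ (which you correctly isolate as the crux, resting on $\sum_n f_n^2\le 1$ strictly and the row-operator/Cauchy--Schwarz estimate). What the paper's approach buys is brevity, since the convergence and summation-of-cp-maps facts are outsourced to a textbook. One small gloss in your write-up: the identity $V^*DV\xi=\sum_{n\in Y}f_na_nf_n\,\xi$ directly gives norm convergence of the partial sums applied on one side of vectors in $E$; to conclude full strict convergence you should also note that the same computation applied to $(a_n^*)_n$ handles the other side by taking adjoints. This is a one-line addition, not a gap.
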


\begin{proof}
The sum in the definition of $\Psi_Y$  is strictly convergent (\cite[Claim 1 on p. 370]{Fa:STCstar}), thus $\Psi_Y$ is well-defined. It is completely positive because each of the summands is completely positive (this is similar to \cite[Claim 2 on p. 370]{Fa:STCstar}). Since $\Psi_Y(1)=\sum_{n\in Y} f_n^2\leq 1_{\cM(E)}$, $\Psi_Y$ is contractive.  
\end{proof}

The following is \cite[Lemma 3.1]{manuilov2004theory} but we include a proof for the reader's convenience ($\pi\colon\cM(E)\to \cQ(E)$ denotes the quotient map).

\begin{lemma}\label{L.MTestimate}
	Suppose that $E$ is $\sigma$-unital and $f_n$, for $n\in \bbN$, is a sequence of positive contractions in $E$ such that the sum $\sum_n f_n^2$ strictly converges to~$1_{\cM(E)}$ and $f_m f_n=0$ if $|m-n|\geq 2$.
	Then for every $k\in \bbZ$ and every sequence $(a_n)$ in $\ell_\infty(\cM(E))$ the sum $\sum_n f_n a_n f_{n+k}$ strictly converges to an element of $\cM(E)$ and 
		$\|\sum_n f_n a_n f_{n+k}\|\leq \sup_n \|a_n\|$. 
			We  also have   
	\[
	\textstyle	\|\pi(\sum_n f_n a_n f_{n+k})\|\leq \limsup_n \|a_n\|. 
	\]	
\end{lemma}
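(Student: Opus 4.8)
The plan is to recognize the banded double sum $\sum_n f_n a_n f_{n+k}$ as a single compression $V^*MV$ of a weighted-shift operator, which yields strict convergence and, crucially, the \emph{sharp} norm bound with constant $1$ rather than the constant $2$ that a naive even/odd splitting would produce. I treat $k\geq 0$ below; the case $k<0$ is identical after declaring $f_j:=0$ for $j<0$ and summing over all $n\in\bbN$.

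For the norm estimate I would pass to the stabilization and compute inside $\cM(E\otimes\cK)$, writing $e_{ij}$ for the standard matrix units of $\cK$. Consider the ``column'' $V:=\sum_n f_n\otimes e_{n,0}$ and the ``weighted shift'' $M:=\sum_n a_n\otimes e_{n,n+k}$. Each is a genuine multiplier of $E\otimes\cK$: the partial sums are uniformly bounded (by $1$ and by $\sup_n\|a_n\|$, respectively) and strictly Cauchy, the latter being immediate from orthogonality of the matrix units together with the consequence $\sum_n g^*f_n^2g\to g^*g$ for $g\in E$ of the strict convergence $\sum_n f_n^2=1_{\cM(E)}$. A computation using only matrix-unit orthogonality gives $V^*V=(\sum_n f_n^2)\otimes e_{0,0}=1\otimes e_{0,0}$, so $V$ is a partial isometry with $\|V\|=1$, while $MM^*=\sum_n a_na_n^*\otimes e_{n,n}$ gives $\|M\|=\sup_n\|a_n\|$. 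Multiplying out and using that $V=V(1\otimes e_{0,0})$ and $V^*=(1\otimes e_{0,0})V^*$, one finds $V^*MV=(\sum_n f_n a_n f_{n+k})\otimes e_{0,0}$, an element of the corner $(1\otimes e_{0,0})\cM(E\otimes\cK)(1\otimes e_{0,0})\cong\cM(E)$. This produces the bound $\|\sum_n f_n a_n f_{n+k}\|=\|V^*MV\|\leq\|V\|^2\|M\|\leq\sup_n\|a_n\|$.

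Strict convergence of the series then comes for free: the truncations $M^{(N)}:=\sum_{n\leq N} a_n\otimes e_{n,n+k}$ converge to $M$ strictly (for each elementary tensor the product is eventually constant and the $M^{(N)}$ are uniformly bounded), so by strict continuity of multiplication on bounded sets $V^*M^{(N)}V\to V^*MV$ strictly; unwinding the corner isomorphism, $\sum_{n\leq N} f_n a_n f_{n+k}\to\sum_n f_n a_n f_{n+k}$ strictly in $\cM(E)$. For the corona estimate I would use that each $f_n$ lies in $E$, so each finite partial sum $\sum_{n<N} f_n a_n f_{n+k}$ lies in $E=\ker\pi$; hence $\pi(\sum_n f_n a_n f_{n+k})=\pi(\sum_{n\geq N} f_n a_n f_{n+k})$ and the norm bound applied to the tail gives $\|\pi(\sum_n f_n a_n f_{n+k})\|\leq\sup_{n\geq N}\|a_n\|$ for every $N$, whence $\leq\limsup_n\|a_n\|$. (The hypothesis $f_mf_n=0$ for $|m-n|\geq 2$ is what one would invoke to run the tail estimate directly in $\cM(E)$, splitting into even and odd indices so that the summands of each parity have pairwise orthogonal left and right supports; in the stabilization picture above it is not strictly needed, the orthogonality of the matrix units playing its role.)

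I expect the only real obstacle to be obtaining the constant $1$: this forces the compression viewpoint $V^*MV$ in place of any term-by-term or parity-by-parity estimate, and then demands a careful check that $V$ and $M$ are honest multipliers, so that the formal identity $V^*MV=(\sum_n f_n a_n f_{n+k})\otimes e_{0,0}$ is legitimate and records genuine strict (not merely strong-operator) convergence in $\cM(E)$.
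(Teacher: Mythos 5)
Your proof is correct, but it takes a genuinely different route from the paper. The paper works entirely inside $\cM(E)$: it fixes $i<j$ and a contraction $b$, views the tuples $(a_n^*f_n)_n$ and $(f_{n+k}b)_n$ as elements of the Hilbert $\cM(E)$-module $\cM(E)^{\oplus(j-i)}$, and applies the generalized Cauchy--Schwarz inequality to get $\|\sum_{n=i}^j f_na_nf_{n+k}b\|^2\leq \sup_{n\geq i}\|a_n\|^2\,\|\sum_{n=i}^j b^*f_{n+k}^2b\|$; taking $b=1$ yields the sharp norm bound on finite sums, taking $b\in E$ shows the tails vanish strictly (hence strict convergence), and the resulting tail estimate $\|\sum_{n\geq i}f_na_nf_{n+k}\|\leq\sup_{n\geq i}\|a_n\|$ delivers the corona bound exactly as in your last step. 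Your compression picture $V^*MV$ in $\cM(E\otimes\cK)$ is the operator-theoretic incarnation of the same Cauchy--Schwarz mechanism ($V$ is literally the Hilbert-module vector $(f_n)_n$ realized as a column), so both arguments produce the constant $1$, and both in fact reveal that the band hypothesis $f_mf_n=0$ for $|m-n|\geq 2$ is never used. What the paper's route buys is economy: no auxiliary multipliers, no corner isomorphism, and strict convergence read off directly from tails acting on $E$. What your route buys is a clean structural identity (the banded sum \emph{is} a compression of a weighted shift by a partial isometry), at the cost of some bookkeeping you should make explicit: that the partial sums of $\sum_n f_n^2$ are dominated by $1$ (positivity plus strict closedness of the positive cone, giving $\|V_N\|\leq 1$), and that for \emph{bounded} nets the strict topology of $\cM(E\otimes\cK)$ restricted to the corner $(1\otimes e_{0,0})\cM(E\otimes\cK)(1\otimes e_{0,0})$ agrees with the strict topology of $\cM(E)$ under $x\mapsto x\otimes e_{0,0}$, which is what legitimizes ``unwinding the corner isomorphism'' and identifying $V^*MV$ with an element of $\cM(E)$.
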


\begin{proof}
Fix natural numbers $i<j$, and consider $\cM(E)^{\oplus(j-i)}$ as a Hilbert $\cM(E)$-module.
Fix a contraction $b\in \cM(E)$.
By the generalized Cauchy--Schwarz inequality we have  the following. 
	\begin{align*}
	\textstyle	\|\sum_{n=i}^j f_n a_n f_{n+k}\|^2&\textstyle\leq \|\sum_{n=i}^j f_n a_n a_n^* f_n\|
		\|\sum_{n=i}^j  f_{n+k}^2 b \|\\
		 &\textstyle\leq \sup_{n\geq i}\|a_n\|^2 \|\sum_{n=i}^j f_n^2 \| \|\sum_{n=i}^j b^* f_{n+k}^2 b\| \\
		 &\textstyle\leq \sup_{n\geq i} \|a_n\|^2.
	\end{align*}
	With respect to the strict topology, we have $\lim_{j\to \infty} \sum_{n=i}^j f_n^2=(1-e_i)$ and $\lim_{j\to \infty}\sum_{n=i}^j b^* f_{n+k}^2 b=b^*(1-e_{i+k})b$.
	Since $\lim_{i\to \infty} \|b^*(1-e_{i+k})b\|=0$ when $b\in E$, the series is strictly convergent.  
	
	When inserting $b=1$ and letting $j\to\infty$, the above estimate shows $\|\sum_{n\geq i} f_n a_n f_{n+k}\| \leq \sup_{n\geq i}\|a_n\|$ for all $i$.
	This implies both of the desired norm estimates.   
\end{proof}

%%%

\begin{lemma} \label{Lem.separable-exhaustion-multipliers}
Let $A$ be a $\sigma$-unital \cstar-algebra.
Given any strictly separable subset $X$ of $\cM(A)$, there exists a nondegenerate separable \cstar-subalgebra $B\subseteq A$ such that under the canonical inclusion $\cM(B)\subseteq\cM(A)$, one has $X\subseteq\cM(B)$.

As a consequence, if $\cC$ is any cofinal subset of $\Sep(A)$, then for any separable \cstar-subalgebra $C$ of $\cQ(A)$ there exists $B\in\cC$ with $C\subseteq\cQ(B)$.
\end{lemma}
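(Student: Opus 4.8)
The plan is to prove the first assertion by a closing-off (Blackadar-type) construction, and then to deduce the second by combining it with a monotonicity property of multiplier algebras under nondegenerate inclusions.

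I would begin by recording two preparatory facts. Since $A$ is $\sigma$-unital, fix a strictly positive element $h\in A$, so that $(h^{1/n})_n$ is a sequential approximate unit of $A$ lying in $C^*(h)$. Since $X$ is strictly separable, fix a countable subset $X_0\subseteq X$ that is dense in $X$ in the strict topology. Finally, recall that for a nondegenerate \cstar-subalgebra $B\subseteq A$ (i.e.\ one with $\overline{BA}=A$) the canonical inclusion $\cM(B)\subseteq\cM(A)$ identifies $\cM(B)$ with the idealizer $\{m\in\cM(A)\mid mB\subseteq B\text{ and }Bm\subseteq B\}$; hence $X\subseteq\cM(B)$ will follow as soon as $B$ is nondegenerate and invariant under left and right multiplication by every element of $X$. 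Now I would build $B$ recursively: set $B_0=C^*(h)$ and let $B_{n+1}$ be the \cstar-subalgebra of $A$ generated by $B_n\cup X_0B_n\cup B_nX_0$, noting that $X_0B_n,B_nX_0\subseteq A$ (as $X_0\subseteq\cM(A)$) and that each $B_n$ is separable since $X_0$ is countable. Put $B=\overline{\bigcup_n B_n}$, a separable \cstar-subalgebra of $A$ containing $h$; because $h^{1/n}\in C^*(h)\subseteq B$ is an approximate unit of $A$, one has $\overline{BA}=A$, so $B$ is nondegenerate.

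It remains to verify the invariance. By construction and norm-continuity of multiplication by a fixed element of $\cM(A)$, together with the density of each $B_n$ in $B$, one gets $X_0B\subseteq B$ and $BX_0\subseteq B$. This upgrades to all of $X$: if $x\in X$ and $x_i\to x$ strictly with $x_i\in X_0$, then for $b\in B\subseteq A$ we have $x_ib\to xb$ and $bx_i\to bx$ in norm, so $xb,bx\in B$ by closedness. Thus $X$ idealizes $B$ and $X\subseteq\cM(B)$, as desired. The step needing care here is precisely this passage from the strictly dense $X_0$ to all of $X$, which is where the \emph{strict} (rather than norm) separability of $X$ enters.

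For the consequence, let $C\subseteq\cQ(A)$ be separable, let $\pi\colon\cM(A)\to\cQ(A)$ be the quotient map, and lift a countable dense subset of $C$ to a countable (hence strictly separable) set $X\subseteq\cM(A)$ with $\overline{\pi(X)}=C$. Applying the first statement, fix a nondegenerate separable $B_0\subseteq A$ with $h\in B_0$ and $X\subseteq\cM(B_0)$, and by cofinality of $\cC$ choose $B\in\cC$ with $B\supseteq B_0$. The key observation is that $B_0$ is nondegenerate in $B$: since $B_0\supseteq C^*(h)$ contains the approximate unit $(h^{1/n})_n$ of $A$, which is in particular an approximate unit of $B$, we have $\overline{B_0B}=B$. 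Consequently the canonical inclusion $\cM(B_0)\subseteq\cM(B)$ holds and, by uniqueness of the extensions, is compatible with the inclusions into $\cM(A)$, so that $X\subseteq\cM(B_0)\subseteq\cM(B)$. Moreover $\cM(B)\cap A=B$: if $a\in A$ idealizes $B$, then $h^{1/n}a\in Ba\subseteq B$ and $a=\lim_n h^{1/n}a$, whence $a\in B$. Therefore $\cQ(B)=\cM(B)/B$ embeds into $\cQ(A)$, and $C=\overline{\pi(X)}\subseteq\cQ(B)$.

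I expect the main obstacle to be exactly this monotonicity $\cM(B_0)\subseteq\cM(B)$ under enlargement of $B$; since multiplier algebras are not functorial in general, it is the common approximate unit $(h^{1/n})_n$ forcing $B_0$ to be nondegenerate in every intermediate $B$ that makes it work. It is also this monotonicity that allows an \emph{arbitrary} cofinal $\cC$ (rather than a club) to be used, because the subalgebras $B$ with $X\subseteq\cM(B)$ then contain the cofinal tail $\{B\in\Sep(A)\mid B\supseteq B_0\}$, which meets every cofinal set.
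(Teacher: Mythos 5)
Your proposal is correct and takes essentially the same route as the paper: both arguments construct a separable nondegenerate $B\subseteq A$ containing a sequential approximate unit of $A$ and invariant under left/right multiplication by $X$, identify $\cM(B)$ with the idealizer of $B$ in $\cM(A)$, and deduce the consequence from functoriality/monotonicity of multiplier algebras under nondegenerate inclusions together with cofinality of $\cC$. The only cosmetic difference is that the paper generates $B$ in a single step from $\{xe_n \mid x\in X,\ n\in\bbN\}$ (norm-separable because $x\mapsto xe_n$ is strict-to-norm continuous), obtaining $xB\subseteq B$ from the limit trick $x(ye_n)=\lim_m xe_mye_n$, which avoids your recursion and the upgrade from a strictly dense countable $X_0$ to all of $X$.
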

\begin{proof}
The ``as a consequence'' part follows from functoriality of the multiplier algebra under nondegenerate inclusions of \cstar-algebras. First note that since $A$ is $\sigma$-unital, the set of $B\in \cC$ that include an approximate unit for~$A$ (and are therefore nondegenerate) is cofinal in $\cC$. 
Given a nondegenerate inclusion $B\subseteq A$, if $\cM(B)\subseteq\cM(A)$ is the induced inclusion, then the homomorphism theorem also gives us an inclusion $\cQ(B)\subseteq\cQ(A)$.
If the first part of the statement holds and we pick a separable \cstar-subalgebra $C$ of $\cQ(A)$, then we may choose a separable subset $X\subseteq\cM(A)$ whose image under the quotient map is $C$, and apply the first part of the statement to find the desired element $B\in\cC$.

Hence it suffices to prove the first part of the statement.
We may assume without loss of generality that $X$ is self-adjoint and contains the unit.
We pick a countable approximate unit $e_n\in A$.
Let $B$ be the \cstar-algebra generated by $\{ xe_n\mid n\in\mathbb N,\ x\in X\}$, which is norm-separable if $X$ is strictly separable.
Evidently $B$ is nondegenerate in $A$ because it contains all $e_n$.
Furthermore, one has for all $x,y\in X$ that
\[
x(ye_n)=\lim_{m\to\infty} xe_m ye_n \in B.
\]
In this way it follows that $xB\subseteq B$ for all $x\in X$.
Since $X$ was self-adjoint, we have also $Bx\subseteq B$ for all $x\in X$.
Hence every $x\in X$ is automatically a multiplier on $B$ and the claim is proved.
\end{proof}

We can now prove Theorem~\ref{T.Q-saturated}. 

\begin{theorem}\label{T.Q-saturated.1}
Suppose that $\calD$ is a strongly self-absorbing \cstar-algebra and that $A$ is a $\sigma$-unital \cstar-algebra that is separably $\calD$-stable.
Then $\cQ(A)$ is $\calD$-saturated. 
\end{theorem}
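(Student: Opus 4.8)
The plan is to fix an arbitrary separable \cstar-subalgebra $C\subseteq\cQ(A)$ and to produce a unital, injective $*$-homomorphism $\calD\to\cQ(A)\cap C'$. First I would reduce to a genuinely $\calD$-stable, separable situation. By Proposition~\ref{Prop.ssa-clubs} the separable $\calD$-stable subalgebras of $A$ form a club, and since $A$ is $\sigma$-unital the separable subalgebras containing a fixed countable approximate unit also form a club; by Lemma~\ref{Lem.clubs} their intersection $\cC$ is a cofinal family of separable, $\calD$-stable, \emph{nondegenerate} subalgebras. Applying the ``as a consequence'' part of Lemma~\ref{Lem.separable-exhaustion-multipliers} to $\cC$ yields $B\in\cC$ with $C\subseteq\cQ(B)$, and fixing a lift I may also fix a separable self-adjoint unital set $X\subseteq\cM(B)$ with $\pi(X)=C$, where $\pi\colon\cM(B)\to\cQ(B)$ is the quotient map. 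Since $B$ is nondegenerate, $\cM(B)\subseteq\cM(A)$ and $1_{\cM(B)}=1_{\cM(A)}$, so $\cQ(B)\subseteq\cQ(A)$ unitally; it therefore suffices to embed $\calD$ unitally into $\cQ(B)$ with image commuting with $\pi(X)$.

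Next comes the assembly. Using Theorem~\ref{Thm.Cstar-McDuff}\ref{Thm.Cstar-McDuff:3} I fix unital $*$-homomorphisms $\sigma_n\colon\calD\to\cM(B)$ that are approximately central for $B$, and using Lemma~\ref{L.qc} applied to $E=B$ and the countable set $X$, with a \emph{summable} sequence $\e_n$, I fix a quasicentral approximate unit with $f_n=(e_{n+1}-e_n)^{1/2}$, so that $\sum_n f_n^2=1$ strictly, $f_mf_n=0$ for $|m-n|\ge 2$, consecutive $f_n$ commute, and $\|[f_n,x]\|<\e_n$ for $x\in X$. I then set $\Phi(d)=\sum_n f_n\sigma_n(d)f_n$, strictly convergent by Lemma~\ref{L.MTestimate}, and put $\bar\Phi=\pi\circ\Phi$. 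The workhorse is a refinement of Lemma~\ref{L.MTestimate}: splitting into even and odd indices, the families $\{f_{2n}\}$ and $\{f_{2n+1}\}$ each consist of elements with pairwise orthogonal ranges, which gives the block-diagonal bound $\|\pi(\sum_n f_n b_n f_n)\|\le 2\limsup_n\|f_n b_n f_n\|$ for any bounded $(b_n)$ in $\cM(B)$. This reduces every verification to showing a $\limsup$ vanishes. Unitality ($\Phi(1)=\sum_n f_n^2=1$) and $*$-linearity are immediate, and injectivity will follow from simplicity of $\calD$ once $\bar\Phi$ is a homomorphism. For multiplicativity I would expand $\Phi(d)\Phi(d')-\Phi(dd')$, discard the terms with $|m-n|\ge 2$ via orthogonality, and use the approximate commutation of $\sigma_n$ with $f_{n-1},f_n,f_{n+1}$ to move the $f$'s around; all resulting error terms carry a factor $f_n$ and are norm-summable, hence lie in $B$, so the product collapses modulo $B$ to the telescoping defect $\sum_n f_n^2f_{n+1}^2\,(\sigma_n-\sigma_{n+1})(dd')$. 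By the block-diagonal bound this lies in $B$ \emph{provided the sequence varies slowly}, i.e.\ $\|(\sigma_n-\sigma_{n+1})(dd')\|\to 0$ for $d,d'$ in a countable dense $*$-subalgebra of $\calD$.

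For the commutation with $\pi(X)$ the same manipulation, again absorbing error terms with a factor $f_n$ and norm bounded by $\e_n$ into $B$, gives $[\Phi(d),x]\equiv\sum_n f_n[\sigma_n(d),x]f_n\pmod{B}$; by the block-diagonal bound it then suffices to arrange $\limsup_n\|f_n[\sigma_n(d),x]f_n\|=0$, which (after moving $f_n$ past $\sigma_n(d)$) is equivalent to $\sigma_n$ asymptotically commuting with the \emph{blocks} $f_nxf_n\in B$.

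I expect this last point to be the main obstacle, and it is the only place where the sequence $(\sigma_n)$ may not be chosen freely. There is a genuine tension: multiplicativity forces $(\sigma_n)$ to vary slowly, whereas commuting with $f_nxf_n$ asks $\sigma_n$ to track a moving target in $B$ which, already for an $x$ with non-central image, is norm-separated and not precompact; moreover a single approximately central sequence of $B$ cannot asymptotically commute with a general multiplier $x$, so the two demands cannot be satisfied independently. My plan is therefore to build $(f_n)$ and $(\sigma_n)$ by one simultaneous recursion rather than separately: traversing in small increments a path of approximately central unital embeddings of $\calD$ into $\cM(B)$ (so slow variation holds by construction), while at each stage exploiting the essential uniqueness of such embeddings for the strongly self-absorbing $\calD$—any two sufficiently central unital copies of $\calD$ are approximately unitarily equivalent through approximately central unitaries—to rotate the current embedding into asymptotic commutation with the newly exposed block, using the overlaps $f_nf_{n+1}$ as the room to interpolate between consecutive blocks without spoiling slow variation. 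Making this interplay quantitative, so that the two $\limsup$'s above vanish simultaneously, is the technical heart; the remainder is bookkeeping with the estimates of Lemmas~\ref{L.fn} and~\ref{L.MTestimate}, after which $\bar\Phi$ is a unital $*$-homomorphism with image in $\cQ(B)\cap\pi(X)'\subseteq\cQ(A)\cap C'$, injective by simplicity of $\calD$, as required.
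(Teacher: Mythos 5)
Your reduction to the separable, nondegenerate case is sound and matches the paper's (clubs, Lemma~\ref{Lem.separable-exhaustion-multipliers}, and the unital inclusion $\cQ(B)\subseteq\cQ(A)$), and your analysis of the single-sum map $\Phi(d)=\sum_n f_n\sigma_n(d)f_n$ correctly isolates the two competing requirements: multiplicativity modulo $B$ forces slow variation $\|(\sigma_n-\sigma_{n+1})(d)\|\to 0$ because of the cross terms supported on $f_nf_{n+1}$, while commutation with $\pi(X)$ forces each $\sigma_n$ to approximately commute with the blocks $f_nxf_n$. But at exactly this point the proposal stops being a proof: the ``simultaneous recursion'' that is supposed to reconcile the two demands is never carried out, and you yourself flag it as ``the technical heart.'' This is a genuine gap, not bookkeeping. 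The approximate unitary equivalence of approximately central embeddings of $\calD$ is an asymptotic statement; rotating $\sigma_n$ into commutation with the freshly exposed block requires a unitary that is in general far from $1$, so a single step destroys slow variation, while breaking the rotation into many small steps spreads it over many indices $n$, during which the intermediate embeddings have no reason to commute with the intermediate blocks $f_mx f_m$. Nothing in your sketch controls this, and it is precisely the difficulty that makes a direct construction of a multiplicative map hard.

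The paper's proof avoids the problem rather than solving it, and this is the idea missing from your proposal: one does not need a single $*$-homomorphism-like sum at all. Splitting into even and odd indices, $\Psi_i(d)=\sum_n f_{2n+i}\pi_{2n+i}(d)f_{2n+i}$ for $i=0,1$, each sum is supported on a family $\{f_{2n+i}\}_n$ of \emph{pairwise orthogonal} elements, so all cross terms between distinct $\pi_m,\pi_n$ vanish exactly and no slow variation is ever needed; the $\pi_n$ are chosen completely independently, each sufficiently central for the finitely many elements of $A$ relevant at stage $n$ (including the compressions $f_nb_k\in A$, which is how commutation with $B$ is achieved). Each $\tilde\Psi_i$ is then only a c.p.\ order-zero map (not unital, not multiplicative), their ranges commute pointwise modulo $A$, and $\tilde\Psi_0(1)+\tilde\Psi_1(1)=1$; the unital $*$-homomorphism $\calD\to\cQ(A)\cap B'$ is then produced abstractly by \cite[Lemma~6.8]{hirshberg2017rokhlin}, using that $\calD$ is strongly self-absorbing. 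In other words, the multiplicativity you are fighting for is obtained for free from the order-zero structure plus that lemma. If you want to salvage your approach, replace the goal ``$\bar\Phi$ is a homomorphism'' by the goal ``two order-zero maps with commuting ranges summing to $1$'': your even/odd block-diagonal bound and your estimates on $[f_n\sigma_n(d)f_n,x]$ are then essentially all that is required.
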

\begin{proof}
Let $h\in A$ be a strictly positive element.
As $A$ is separably $\calD$-stable, the collection $\cC\subseteq\Sep(A)$ of separable $\calD$-stable \cstar-subalgebras in $A$ containing $h$ is a club.
Such subalgebras are automatically nondegenerate.
By Lemma~\ref{Lem.separable-exhaustion-multipliers}, we see that every separable \cstar-subalgebra of $\cQ(A)$ can be seen as a subalgebra of $\cQ(B)$ for some $B\in\cC$.
Considering the definition of $\calD$-saturation, it suffices to show that $\cQ(B)$ is $\calD$-saturated for every $B\in\cC$.
To put it differently, it suffices to prove the conclusion of the theorem in case when  $A$ is separable.
Let us assume that as of now.

Suppose that $B$ is any separable \cstar-subalgebra of $\cQ(A)$.
By \cite[Lem\-ma~6.8]{hirshberg2017rokhlin}, there exists a unital $*$-homomorphism from $\calD$ into $\cQ(A)\cap B'$ if and only if there exist two completely positive maps of order zero $\tilde\Psi_0, \tilde\Psi_1:\calD\to\cQ(A)\cap B'$  whose ranges commute pointwise and such that $\tilde\Psi_0(1)+\tilde\Psi_1(1)=1$. 
We proceed to find such maps. 
	
Fix a countable subset $\{b_n\mid n\in \bbN\}$ of the unit ball of $\cM(A)$ whose image under the quotient map is dense in the unit ball of $B$. 
By Lemma~\ref{L.qc} there is an approximate unit $e_n$, for $n\in \bbN$, of $A$ such that with 
\[
f_n=(e_{n+1}-e_n)^{1/2} 
\]
the series $\sum_n f_n^2$ strictly converges to $1_{\cM(A)}$, $f_m f_n=0$ whenever $|m-n|\geq 2$, and furthermore we have  
\begin{equation} \label{eq:f-and-b}
\|[f_n,b_j]\|<2^{-n} \quad\text{for all } 1\leq j\leq n. 
\end{equation}
 Fix an increasing sequence $F_n$, for $n\in \bbN$, of finite subsets of the unit ball of $\calD$ with dense union. 
 Since we assumed $A$ to be separable, we have $A\cong A\otimes \calD$.
 As $\calD$ embeds approximately centrally into itself, we can find a sequence of unital $*$-homomorphisms $\pi_n\colon \calD\to \cM(A)$, for $n\in \bbN$, such that $[\pi_n(d),a]\to 0$ in norm for all $a\in A$ and $d\in\calD$.
By passing to a subsequence, if necessary, we may ensure that the sequence $\pi_n$ satisfies the following conditions for all $n$:
 \begin{enumerate}[label=(\roman*)]
  	\item $\|[\pi_n(d),f_m]\|<2^{-n}$ for all $d\in F_n$ and $m\leq n+1$, 
 	\item $\|[\pi_n(d),f_n b_k]\|<2^{-n}$ for all $d\in F_n$ and $k<n$,
	\item $\|[\pi_n(d_1), f_j \pi_j(d_2) f_j]\|<2^{-n}$ for all $d_1,d_2\in F_n$ and $j<n$. 
 \end{enumerate}
Define $\Psi_i\colon \calD\to \cM(A)$ for $i=0,1$ by 
\[
\Psi_i(d):=\sum_{n=0}^\infty f_{2n+i}\pi_{2n+i}(d)f_{2n+i}. 
\]
Each $\Psi_i$ is well-defined and completely positive by Lemma~\ref{L.fn}. 
Clearly $\Psi_0(1)+\Psi_1(1)=1$.
For $i=0,1$, let $\tilde\Psi_i: \calD\to\cQ(A)$ be the composition with the quotient map $\cM(A)\to\cQ(A)$.
Then $\tilde \Psi_0$ and $\tilde\Psi_1$ are completely positive maps from $\calD$ into $\cQ(A)$ such that $\tilde\Psi_0(1)+\tilde\Psi_1(1)=1$. 

First we observe for all $n$ and $k<n$ that 
\begin{equation*}
	f_n \pi_n(d) f_n b_k \approx_{2^{-n}} 
		f_n^2 b_k \pi_n(d) 
		 \approx_{2^{-n+1}} b_k f_n^2 \pi_n(d)
		 \approx_{2^{-n}} b_k f_n \pi_n(d) f_n.
\end{equation*}
Therefore $\|[f_n \pi_n(d) f_n ,b_k]\|<2^{-n+2}$ and   $[\Psi_i(d),b_k]\in A$ for $i=0,1$, all $d\in \bigcup_n F_n$, and all $k$.
By continuity, the range of the induced map $\tilde\Psi_i: \calD\to\cQ(A)$ is included in $\cQ(A)\cap B'$. 

Our next task is to prove that the ranges of $\Psi_0$ and $\Psi_1$ commute pointwise modulo $A$.
For all $n$ and $d\in F_n$ we have
\[
f_{n+1}^2 f_n \pi_n(d) f_n\approx_{2^{-n+1}} 
f_n \pi_n(d) f_n  f_{n+1}^2.
\]
Finally, for all $n$ and $d_1, d_2\in F_n$ we get  
\begin{align*}
f_{n+1} \pi_{n+1}(d_1) f_{n+1} f_n \pi_n(d_2) f_n
&\approx_{2^{-n}} 
f_{n+1}^2 \pi_{n+1}(d_1) f_n \pi_n(d_2)f_n\\
&\approx_{2^{-n}} 
f_{n+1}^2 f_n \pi_n(d_2) f_n \pi_{n+1}(d_1)\\
&\approx_{2^{-n+1}}
 f_n \pi_n(d_2) f_n f_{n+1}^2 \pi_{n+1}(d_1)\\
&\approx_{2^{-n}}
f_n \pi_n(d_2) f_n f_{n+1} \pi_{n+1}(d_1)f_{n+1}.
\end{align*}
Since $f_m f_n=0$ if $|m-n|\geq 2$, this shows that for all $m>n$, $d\in F_n$, and $d'\in F_m$ we have $\|[f_m \pi_m(d') f_m, f_n \pi_n(d) f_n]\|<5\cdot 2^{-n}$.
Since $\bigcup_n F_n$ is dense in the unit ball of $\calD$, we get $[\Psi_0(d_1),\Psi_1(d_2)]\in A$  for all $d_1,d_2\in\calD$. 
Equivalently, we get that the ranges of $\tilde\Psi_0$ and $\tilde\Psi_1$ commute pointwise.

We lastly verify that for $i=0,1$, $\tilde\Psi_i$ is of order zero. 
Let $k\in\mathbb N$ and $a,b\in F_k$ be two elements.
Because $f_m f_n=0$ if $|m-n|\geq 2$, we have 
\[
\Psi_i(a)\Psi_i(b)=\sum_{n=0}^\infty f_{2n+i} \pi_{2n+i}(a)f_{2n+i}^2 \pi_{2n+i}(b) f_{2n+i}.
\] 
Since $\|[f_{n}, \pi_n(a)]\|<2^{-n+1}$ for $n+1\geq k$, 
and each $\pi_n$ is a unital $*$-homomorphism, it follows that
\[
\Psi_i(a)\Psi_i(b)-\Psi(1)\Psi_i(ab) =\sum_{n=0}^\infty f_{2n+i} [\pi_{2n+i}(a),f_{2n+i}^2] \pi_{2n+i}(b) f_{2n+i} \in A.
\]  
By continuity, we obtain the equality $\tilde\Psi_i(a)\tilde\Psi_i(b)=\tilde\Psi_i(1)\tilde\Psi_i(ab)$ for all $a,b$ in $\calD$.
In conclusion, $\tilde\Psi_i$ is of order zero. 

By what we remarked earlier (using \cite[Lemma~6.8]{hirshberg2017rokhlin}), it follows that there exists a unital $*$-homomorphism from $\calD$ to $\cQ(A)\cap B'$. 
Since $B$ was chosen arbitrarily, the proof is complete.
\end{proof}

Here is a partial converse to Proposition~\ref{P.A-in-M(A)}, which treats a consequence of (separable) $\calD$-stability that has not been previously observed in the literature to the best of our knowledge. 

\begin{lemma}	\label{L.A-in-M(A)}
Suppose that $\calD$ is strongly self-absorbing and $A$ is a $\sigma$-unital \cstar-algebra. 
	Then $A$ is separably $\calD$-stable if and only if $\cM(A)$ is separably $\calD$-stable.
\end{lemma}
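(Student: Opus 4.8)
The plan is to prove the equivalence by combining the results already assembled in the excerpt, treating the two directions asymmetrically. The ``if'' direction is immediate: if $\cM(A)$ is separably $\calD$-stable, then since $A$ is a (hereditary) ideal in $\cM(A)$, Proposition~\ref{P.A-in-M(A)} directly gives that $A$ is separably $\calD$-stable. No $\sigma$-unitality is even needed for this direction. The substance of the lemma lies in the ``only if'' direction, for which the $\sigma$-unitality hypothesis is essential.

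For the ``only if'' direction, I would reduce $\cM(A)$ to the two pieces $A$ and $\cQ(A)$ that sit in the short exact sequence $0\to A\to\cM(A)\to\cQ(A)\to 0$, and then invoke Lemma~\ref{L.extensions}. That lemma says $\cM(A)$ is separably $\calD$-stable provided both the ideal $A$ and the quotient $\cQ(A)$ are separably $\calD$-stable. We are assuming $A$ is separably $\calD$-stable, so the only remaining task is to establish that $\cQ(A)$ is separably $\calD$-stable. This is exactly where the recently proved Theorem~\ref{T.Q-saturated.1} enters, but there is a genuine gap to bridge here: Theorem~\ref{T.Q-saturated.1} concludes only that $\cQ(A)$ is $\calD$-\emph{saturated}, which is a priori a weaker statement than being separably $\calD$-stable. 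Saturation gives, for each separable $C\subseteq\cQ(A)$, a unital copy of $\calD$ in the relative commutant $\cQ(A)\cap C'$; separable $\calD$-stability asks that $C$ be enclosed in a separable $\calD$-\emph{absorbing} subalgebra.

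The key step is therefore to show that $\calD$-saturation of a unital \cstar-algebra upgrades to separable $\calD$-stability, at least for the algebra $\cQ(A)$. I would phrase this via Corollary~\ref{cor:axiomatizable}\ref{2.cor:axiomatizable}: it suffices to verify that $\cQ(A)$ \emph{admits large approximately central maps from $\calD$} in the sense of Definition~\ref{Def.approx-central-embeddings}, since that corollary equates this property with separable $\calD$-stability for strongly self-absorbing $\calD$. Concretely, given a separable $A\subseteq\cQ(A)_\infty$ (or, using the equivalent formulation~\ref{P.approx-central-embeddings.2}, a separable $A\subseteq\cQ(A)$), $\calD$-saturation of $\cQ(A)$ furnishes a unital embedding $\calD\to\cQ(A)\cap A'$; composing with the diagonal inclusion $\cQ(A)\cap A'\to(\cQ(A)_\infty\cap A')/(\cQ(A)_\infty\cap A^\perp)$ produces the unital $*$-homomorphism required by condition~\ref{P.approx-central-embeddings.2} of Proposition~\ref{P.approx-central-embeddings}. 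This verifies that $\cQ(A)$ admits large approximately central maps from $\calD$, hence is separably $\calD$-stable.

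The main obstacle I anticipate is precisely this saturation-to-stability upgrade: one must be careful that the unital copy of $\calD$ landing in the genuine relative commutant $\cQ(A)\cap A'$ really does descend, via the diagonal embedding, into the quotient $(\cQ(A)_\infty\cap A')/(\cQ(A)_\infty\cap A^\perp)$ as a \emph{unital} map. Since the image of $1_{\cQ(A)}$ under the diagonal map is a unit modulo $A^\perp$, unitality is preserved, and this check is routine once the framework of Proposition~\ref{P.approx-central-embeddings} is in place. With $\cQ(A)$ now known to be separably $\calD$-stable, Lemma~\ref{L.extensions} applied to the ideal $A\subseteq\cM(A)$ closes the argument; note this is exactly the content already packaged as Corollary~\ref{C.M(A)locally-tensorially-D-absorbing}, so the proof may be presented as an application of that corollary together with Theorem~\ref{T.Q-saturated.1}.
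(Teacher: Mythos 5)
Your proof is correct and follows essentially the same route as the paper's own (first) proof: Theorem~\ref{T.Q-saturated.1} gives $\calD$-saturation of $\cQ(A)$, this upgrades to separable $\calD$-stability, and Corollary~\ref{C.M(A)locally-tensorially-D-absorbing} (i.e., Lemma~\ref{L.extensions} applied to $0\to A\to\cM(A)\to\cQ(A)\to 0$) closes the argument, with Proposition~\ref{P.A-in-M(A)} handling the converse. The only difference is that you spell out the saturation-to-stability upgrade explicitly via Proposition~\ref{P.approx-central-embeddings} and Corollary~\ref{cor:axiomatizable}\ref{2.cor:axiomatizable}, a step the paper treats as immediate.
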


\begin{proof}
The ``if'' part is settled by Proposition~\ref{P.A-in-M(A)}.
For the ``only if'' part, assume $A$ is separably $\calD$-stable.
We give two different short proofs for this implication using Theorem~\ref{T.Q-saturated.1}.

Firstly, by Theorem~\ref{T.Q-saturated.1} we may conclude that $\cQ(A)$ is $\calD$-saturated, so in particular also separably $\calD$-stable.
By Corollary~\ref{C.M(A)locally-tensorially-D-absorbing}, it follows that $\cM(A)$ is separably $\calD$-stable as well. 
Alternatively, we know by Proposition~\ref{P.sep-D-is-stable}\ref{P.sep-D-is-stable:1} that $c_0(A)\cong c_0(\bbN)\otimes A$ is separably $\calD$-stable, hence by Theorem~\ref{T.Q-saturated.1} it follows that its corona $\cQ(c_0(A))=\ell^\infty(\cM(A))/c_0(A)$ is $\calD$-saturated.
The sequence algebra $\cM(A)_\infty=\ell^\infty(\cM(A))/c_0(\cM(A))$ is a quotient of this \cstar-algebra and hence also $\calD$-saturated, which is equivalent to $\cM(A)$ being separably $\calD$-stable via Proposition~\ref{P.approx-central-embeddings}.
\end{proof}

The conclusion of Theorem~\ref{T.Q-saturated.1} may fail if $A$ is not assumed to be $\sigma$-unital. 

\begin{example} \label{ex:1-dim-corona}
Let $\bbU$ be the universal UHF algebra, given as the infinite tensor product $\bigoplus_{n\geq 2} M_n(\bbC)$, which is strongly self-absorbing.
There is a separably $\bbU$-stable \cstar-algebra $A$ such that $\cQ(A)$ is one-dimensional and hence not even separably $\cZ$-stable.
Choose a nonprincipal ultrafilter $\cU$ on $\bbN$ and consider $\calR^\cU$, the (tracial) ultrapower of the hyperfinite II$_1$-factor $\calR$.
Since $\calR$ is McDuff, it follows for any von Neumann subalgebra $B$ of $\calR^\cU$ with separable predual that $\calR^\cU\cap B'$ includes a unital copy of $\bbU$ (and even of $\calR$).
This means that $\calR^\cU$ is $\bbU$-saturated, so in particular separably $\bbU$-stable as a \cstar-algebra.
Let $L$ be a maximal left ideal of $\calR^\cU$ and let $A=L\cap L^*$.
This is a hereditary \cstar-subalgebra of $\calR^\cU$, and it is therefore separably $\bbU$-stable by Proposition~\ref{P.A-in-M(A)}.
By \cite[Theorem~1]{sakai1971derivations}, it follows that $\cQ(A)$ is one-dimensional. 
\end{example}

In relation to this example, we ought to mention that it is not known whether $\calR$ itself is (as a \cstar-algebra) separably $\cZ$-stable.
Our results do not shed light on this problem, however.

\section{A refinement of Schur’s Lemma for Property (T) groups}
\label{S.Schur} 

The main results of the present section are Lemma~\ref{L.Schur} and  Lemma~\ref{L.Schur.spicy.version} and they will serve as the technical backbone for the next section. 

Let $A$ be a \cstar-algebra.
By $\cS(A)$ we denote the space of states on $A$.
In the proofs of the lemmas below we use the theory of Hilbert modules. In particular we will need the the fact that $\cM(A)=\calL_A(A)$ is nothing but the algebra of all adjointable $A$-linear operators on $A$ when we view it as a Hilbert module over itself in a trivial way (\cite[\S II.7.3.11]{Black:Operator}).
For $m\geq 1$, we denote by $\tau_m$ the tracial state on the scalar $m\times m$ matrices, and define the faithful conditional expectation 
\begin{equation}\label{eq.Em}
\cE_m: M_m(A)=M_m\otimes A\to A \quad\text{via}\quad \cE_m=\tau_m\otimes\id_A.
\end{equation}
Note that $\cE_m$ extends to a conditional expectation $M_m(\mathcal M(A))\to \mathcal M(A)$ that is strictly continuous on norm-bounded sets.
We make use of the canonical identification $M_m(\mathcal M(A))=\mathcal L_A( A^{\oplus m} )$, where the latter is the set of all adjointable right $A$-linear endomorphisms on the right $A$-Hilbert module~$A^{\oplus m}$ (see e.g., \cite[\S II.7.1]{Black:Operator}).

\begin{notation}
Let $A$ be a \cstar-algebra and $m\geq 2$.
Given any right $A$-Hilbert module $\mathcal Y$, we can put a norm on $\mathcal L_A(A^{\oplus m},\mathcal Y)$ via 
\[
\|X\|_{m,A}=\|\cE_m(X^*X)\|^{1/2}.
\]
(Note that for $\mathcal Y=A^{\oplus m}$, we put this norm on $M_m(A)$.)
\end{notation}
We observe (as a consequence of the definition) that this norm obeys the equality
\begin{equation} \label{eq:generalized-HS-equality}
	\|1_m\otimes a\|_{m,A}=\|a\|
\end{equation}
and 
for all $a\in\mathcal M(A)$, $X\in \mathcal L_A(A^{\oplus m},\mathcal Y)$,  and $Y\in M_m(\mathcal M(A))$ the inequalities
\begin{equation} \label{eq:generalized-HS-inequalities}
	\|X(1\otimes a)\|_{m,A}\leq\|a\|\|X\|_{m,A} \quad\text{and}\quad \|XY\|_{m,A}\leq\|X\|\|Y\|_{m,A}.
\end{equation}
Here is another well-known fact included for reference.

\begin{lemma}\label{L.state-in-M(A)}
Suppose that $A$ is a \cstar-algebra.
\begin{enumerate}[label=\textup{(\arabic*)}]
	\item Every state $\varphi$ of $A$ has a unique extension to a state $\tilde\varphi$ of $\cM(A)$. 
	\item For every $T\in \cM(A)$ we have $\|T\|=\sup\{\tilde \varphi(T^*T)^{1/2}\mid \varphi\in \cS(A)\}$.
\end{enumerate}
\end{lemma}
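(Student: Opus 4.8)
The plan is to handle the two parts separately and derive part (2) from part (1). Throughout I fix an approximate unit $(e_\lambda)$ of $A$; since $\varphi$ is a state, $\varphi(e_\lambda)\to 1$.

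For part (1), existence is cleanest via the GNS construction: let $(\pi_\varphi, H_\varphi,\xi_\varphi)$ be the GNS triple of $\varphi$. The representation $\pi_\varphi$ is nondegenerate because $\pi_\varphi(e_\lambda)\xi_\varphi\to\xi_\varphi$ and $\xi_\varphi$ is cyclic, so it extends uniquely to a unital $*$-representation $\bar\pi_\varphi\colon\cM(A)\to B(H_\varphi)$; then $\tilde\varphi(T):=\langle\bar\pi_\varphi(T)\xi_\varphi,\xi_\varphi\rangle$ is a state of $\cM(A)$ (note $\tilde\varphi(1)=\|\xi_\varphi\|^2=\lim_\lambda\varphi(e_\lambda)=1$) restricting to $\varphi$. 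For uniqueness, I would show that any state extension $\psi$ is forced to equal $\lim_\lambda\varphi(e_\lambda T e_\lambda)$. Since $\psi(1-e_\lambda)=1-\varphi(e_\lambda)\to 0$, writing $T-e_\lambda Te_\lambda=(1-e_\lambda)T+e_\lambda T(1-e_\lambda)$ and applying the Cauchy--Schwarz inequality to each summand (using $(1-e_\lambda)^2\le 1-e_\lambda$ and $\psi(T^*T)\le\|T\|^2$) gives $|\psi(T)-\psi(e_\lambda T e_\lambda)|\le 2\psi(1-e_\lambda)^{1/2}\|T\|\to 0$. As $A$ is an ideal, $e_\lambda T e_\lambda\in A$, so $\psi(e_\lambda T e_\lambda)=\varphi(e_\lambda T e_\lambda)$ depends only on $\varphi$; hence $\psi$ is determined by $\varphi$.

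For part (2), I would first rewrite the norm using the Hilbert module picture $\cM(A)=\calL_A(A)$. With $S:=T^*T\geq 0$, the operator norm on the Hilbert $A$-module $A$ gives
\[
\|T\|^2=\sup_{\substack{a\in A\\ \|a\|\le 1}}\|Ta\|^2=\sup_{\substack{a\in A\\ \|a\|\le 1}}\|a^*Sa\|.
\]
For fixed $a$ in the unit ball with $\varphi(a^*a)>0$ (the case $\varphi(a^*a)=0$ contributes nothing, as $a^*Sa\le\|S\|\,a^*a$), the positive functional $\varphi_a(x):=\varphi(a^*xa)/\varphi(a^*a)$ is a state of $A$, and the formula $T\mapsto\tilde\varphi(a^*Ta)/\varphi(a^*a)$ defines a state of $\cM(A)$ restricting to $\varphi_a$; by the uniqueness from part (1) this state is exactly $\widetilde{\varphi_a}$. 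Since $a^*Sa\in A$, this yields $\varphi(a^*Sa)=\varphi(a^*a)\,\widetilde{\varphi_a}(S)\le\widetilde{\varphi_a}(S)\le\sup_{\psi\in\cS(A)}\tilde\psi(S)$. As $a^*Sa\in A$ is positive, $\|a^*Sa\|=\sup_{\varphi\in\cS(A)}\varphi(a^*Sa)$, and taking the supremum over $a$ gives $\|T\|^2\le\sup_{\psi}\tilde\psi(S)$. The reverse inequality is immediate from $\tilde\psi(S)\le\|\tilde\psi\|\,\|S\|=\|T\|^2$. Combining and taking square roots gives the claimed formula.

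The main obstacle is the core of part (2): although $\|S\|$ equals the supremum of $\rho(S)$ over \emph{all} states $\rho$ of $\cM(A)$, the states appearing in the formula must be of the special form $\tilde\varphi$, and many states of $\cM(A)$ (for example, those vanishing on $A$) are not of this type. The device of compressing a given state $\varphi$ by an element $a$ of the unit ball---turning $\varphi(a^*(\cdot)a)$ into a genuine extended state $\widetilde{\varphi_a}$ via part (1)---is precisely what bridges this gap, and getting that identification right is the crux of the argument.
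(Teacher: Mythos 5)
Your proposal is correct, and it takes a somewhat different route from the paper's. For part (1) the paper simply cites a reference, while you supply a complete proof (GNS extension for existence; a Cauchy--Schwarz estimate against an approximate unit for uniqueness), and your argument is sound. For part (2) both arguments follow the same skeleton --- reduce $\|T\|^2$ to norms of positive elements $a^*T^*Ta$ of $A$, attain those norms by states of $A$, and dominate by values of extended states --- but the mechanisms differ. The paper takes $a=e_\lambda^{1/2}$ for an approximate unit, uses $\|T\|^2=\lim_\lambda\|T^*e_\lambda T\|$ (strict convergence), picks a state $\varphi_\lambda$ attaining $\|T^*e_\lambda T\|$, and concludes from the operator inequality $T^*e_\lambda T\le T^*T$ evaluated under the single extension $\tilde\varphi_\lambda$; uniqueness of the extension plays no role there beyond making $\tilde\varphi$ well defined. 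You instead compress the state rather than the operator: starting from the multiplier-norm formula $\|T\|^2=\sup_{\|a\|\le 1}\|a^*T^*Ta\|$, you normalize $\varphi(a^*(\cdot)a)$ to a state $\varphi_a$ of $A$ and invoke the uniqueness from part (1) to recognize the functional $\varphi(a^*(\cdot)a)/\varphi(a^*a)$ on $\cM(A)$ as $\widetilde{\varphi_a}$, yielding $\varphi(a^*Sa)\le\widetilde{\varphi_a}(S)$. What each buys: your route is fully self-contained and puts the uniqueness clause of part (1) to genuine use, whereas the paper's is shorter, needs only existence plus positivity of the extension, and avoids the normalization and identification bookkeeping (including the separate treatment of the case $\varphi(a^*a)=0$, which you correctly dispose of).
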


\begin{proof}
The first part follows e.g., from \cite[Lemma~1.8.5]{Fa:STCstar}. For the second part, fix an approximate unit $(e_\lambda)$ in $A$.
Then $T$ is a strict limit of $e_\lambda^{1/2} T$ and $\|T\|=\lim_\lambda \|e_\lambda^{1/2} T\|$.
Since for every $a$ in $A$ there is a state of $A$ such that $\varphi(a^*a)=\|a\|^2$, for every $\lambda$ there is $\varphi_\lambda\in \cS(A)$ such that 
	\[
	\|T^*e_\lambda T\|=\varphi_\lambda(T^*e_\lambda T)\leq \varphi_\lambda(T^*T),
	\] 
	and the conclusion follows. 
\end{proof}

\begin{remark*}
Although it is irrelevant to what follows below, we note that in the second part of Lemma~\ref{L.state-in-M(A)} the supremum is typically not attained.
Choose $A$ to be an arbitrary non-unital but $\sigma$-unital \cstar-algebra so that some stictly positive contraction $e\in A$ exists.
Then $T=(1-e)^{1/2}\in\cM(A)$ has norm one, but $\tilde\varphi(T^2)<1$ for every $\varphi\in \cS(A)$ since $e$ is strictly positive. 
\end{remark*}

For the reader's convenience, in the following discussion we reiterate some of the points from \cite[\S 1.3]{farah2023calkin}.
Recall that a discrete group $\Gamma$ has \emph{Kazhdan's property (T)} if there are $F\Subset \Gamma$ and $\varepsilon>0$ such that for every unitary representation $\rho$ of $\Gamma$ on a Hilbert space~$H$, if there is a vector $\xi$ in~$H$ such that
\begin{equation} \label{Eq.Inv}
	\max_{g\in F} \|\rho(g)\xi-\xi\|<\varepsilon\|\xi\|.
\end{equation}
for all $g\in F$, (such $\xi\in H$  is called \emph{$(F,\varepsilon)$-invariant}), then there is a unit vector in $H$ that is invariant for $\rho$.
The pair $(F,\varepsilon)$ is called a Kazhdan pair for $\Gamma$. 
Suppose $\rho$ is an irreducible representation of $\Gamma$.
Then $\rho$ extends to a representation of the full group \cstar-algebra $\cstar(\Gamma)$ (see \cite{BrOz:C*} for information on group \cstar-algebras). The central cover of $\rho$ (\cite[Definition 1.4.2]{BrOz:C*}) is a Kazhdan projection in the second dual, $\cstar(\Gamma)^{**}$.
However, if $\Gamma$ has Kazhdan's property (T) then~$p_\rho$ belongs to $\cstar(\Gamma)$ 
(\cite[Definition 17.2.3 and Theorem 17.2.4]{BrOz:C*}). Kazhdan projections associated to inequivalent irreducible representations are orthogonal.
See \cite{bekka2008kazhdan}, \cite{BrOz:C*} for property (T) groups and \cite[\S 17]{BrOz:C*}, \cite[3.7.6]{higson2000analytic} for Kazhdan projections. 

Suppose that $\rho$ is an action of a discrete group $\Gamma$ on a Hilbert space $H$. 
Following \cite{farah2023calkin}, we say that  $\xi\in H$ is \emph{scaled $(F,\varepsilon)$-invariant} if
\begin{equation} \label{Eq.Scaled} 
	\max_{g\in F}\|\rho(g)\xi-\xi\|<\varepsilon.
\end{equation} 
Clearly every $\xi$ is scaled $(F,\varepsilon)$-invariant for any $F$ and $\varepsilon>2\|\xi\|$,  but in this case the estimate given in Lemma~\ref{L.bekka} below is vacuous. 

Let $q^\rho$ denote the orthogonal projection to the space of invariant vectors, 
\[
H^\rho:=\{\eta\in H\mid \rho(g)\eta=\eta\text{ for all } g\in \Gamma\}. 
\]
The following is \cite[Proposition 12.1.6]{BrOz:C*} (with $\Gamma=\Lambda$) or  \cite[Proposition~1.1.9]{bekka2008kazhdan} (the case when $F$ is compact, and modulo rescaling). 

\begin{lemma} \label{L.bekka} 
	Suppose  $\Gamma$ is a group with Kazhdan's property (T) and $(F,\varepsilon)$ is a Kazhdan pair for $\Gamma$.
	If $\Gamma$ acts on a Hilbert space $H$ then every scaled~$(F,\delta\varepsilon)$-invariant vector $\xi$ satisfies $\|\xi-q^\rho \xi\|<\delta$.
\end{lemma}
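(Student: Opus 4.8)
The plan is to reduce the statement to the standard property (T) estimate stated in Lemma~\ref{L.bekka} by a rescaling argument. Recall that a Kazhdan pair $(F,\varepsilon)$ guarantees that any representation $\rho$ with an $(F,\varepsilon)$-invariant vector has a nonzero invariant vector, where $(F,\varepsilon)$-invariance is the \emph{relative} condition $\max_{g\in F}\|\rho(g)\xi-\xi\|<\varepsilon\|\xi\|$. The content to be proven here upgrades this to a quantitative bound on the distance from $\xi$ to the invariant subspace $H^\rho$, phrased in terms of the \emph{scaled} (i.e.\ absolute) invariance condition $\max_{g\in F}\|\rho(g)\xi-\xi\|<\delta\varepsilon$.

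The key idea is to decompose $H=H^\rho\oplus (H^\rho)^\perp$ and observe that $(H^\rho)^\perp$ is $\rho$-invariant (since $\rho$ is unitary and $H^\rho$ is invariant), so $\rho$ restricts to a representation $\rho'$ on $(H^\rho)^\perp$ that has \emph{no} nonzero invariant vectors. First I would write $\xi = q^\rho\xi + \eta$ with $\eta := (1-q^\rho)\xi \in (H^\rho)^\perp$. Because $q^\rho\xi$ is $\rho$-invariant, for every $g$ we have $\rho(g)\xi-\xi = \rho(g)\eta-\eta = \rho'(g)\eta-\eta$, so the scaled invariance hypothesis gives $\max_{g\in F}\|\rho'(g)\eta-\eta\| < \delta\varepsilon$. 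If $\eta\neq 0$, this says $\eta$ is $(F,\delta\varepsilon/\|\eta\|)$-invariant for $\rho'$ in the relative sense. The contrapositive of the Kazhdan property for $\rho'$ (which has no invariant unit vector) is exactly what forces a bound: since $\rho'$ admits no invariant unit vector, no vector can be $(F,\varepsilon)$-invariant in the relative sense, i.e.\ we cannot have $\max_{g\in F}\|\rho'(g)\eta-\eta\|<\varepsilon\|\eta\|$. Combining this with $\max_{g\in F}\|\rho'(g)\eta-\eta\|<\delta\varepsilon$ yields $\varepsilon\|\eta\| \leq \delta\varepsilon$, hence $\|\eta\| \leq \delta$; the strictness can be arranged to give $\|\xi - q^\rho\xi\| = \|\eta\| < \delta$ as claimed. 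The case $\eta=0$ is trivial.

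The main obstacle — really the only subtle point — is making the contrapositive argument clean with the correct handling of the strict inequalities, and ensuring the restriction $\rho'$ genuinely has no invariant unit vector so that the Kazhdan pair applies to it (this is immediate since $H^{\rho'} = (H^\rho)^\perp \cap H^\rho = \{0\}$). One must also confirm that the Kazhdan property as cited in Lemma~\ref{L.bekka}'s source transfers verbatim from $\rho$ to the subrepresentation $\rho'$, which it does because the definition quantifies over \emph{all} unitary representations. The rescaling bookkeeping — translating between the absolute bound $\delta\varepsilon$ and the relative bound $\varepsilon\|\eta\|$ — is the place where sign-of-inequality care is needed, but there is no analytic difficulty beyond this. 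I would present the argument in two or three lines, invoking the definition of the Kazhdan pair directly on $\rho'$.
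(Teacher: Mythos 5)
Your proof is correct and follows the standard route: decompose $\xi=q^\rho\xi+\eta$ with $\eta=(1-q^\rho)\xi$, note that $\rho$ restricts to $(H^\rho)^\perp$ with no nonzero invariant vectors, and apply the contrapositive of the Kazhdan-pair definition to $\eta$, giving $\varepsilon\|\eta\|\leq\max_{g\in F}\|\rho(g)\eta-\eta\|<\delta\varepsilon$ and hence $\|\eta\|<\delta$ (note the strictness is automatic from this chain, so no extra care is needed there). The paper itself offers no proof but cites \cite{BrOz:C*} and \cite{bekka2008kazhdan} for this fact, and your argument is essentially the proof appearing in those sources.
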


Suppose that $K$ and $H$ are Hilbert spaces. 
If $K$ is finite-dimensional let $\tau_K$ denote the normalized tracial state on $\cB(K)$ and 
consider the normalized Hilbert--Schmidt norm on $\cB(K,H)$, defined by
\[
\kHS T:=\tau_H(TT^*)^{1/2}.  
\]
With this norm, we note that $\cB(K,H)$ becomes a Hilbert space. 
The lemma below is a strengthening of \cite[Lemma~1.2]{farah2023calkin} which is in turn a minor variation on Schur's Lemma for property~(T) groups (\cite[Lemma~3.7.8]{higson2000analytic}).

\begin{lemma} \label{L.Schur} 
	Suppose $\Gamma$ is a property (T) group with Kazhdan pair $(F,\varepsilon)$ and $\rho_j$ is a  unitary representation of $\Gamma$ on a finite-dimensional Hilbert space~$K_j$ for $j=1,2$.
	Define a unitary representation $\sigma=\sigma_{\rho_1,\rho_2}$ of $\Gamma$ on $\cB(K_1,K_2)$ (viewed as a Hilbert space via the Hilbert--Schmidt norm defined above) via 
	\[
	\sigma_g(T):=\rho_2(g) T \rho_1 (g)^*,\quad T\in \cB(K_1,K_2). 
	\]
	Let $H$ be another Hilbert space and let $T \in \cB(K_1,K_2)\hat{\otimes} H$ satisfy 
	\[
	\max_{g\in F} \|(\sigma_g\otimes 1_H)(T)-T\|<\varepsilon\delta.
	\]
	\begin{enumerate}[label=\textup{(\arabic*)}]
		\item \label{1.Schur} If $K_1=K_2$ and $\rho_1=\rho_2$ is irreducible, then  $\|T-(\tau_1\otimes 1_H)(T)\|<\delta$, where $\tau_1$ is the normalized tracial state on $\cB(K_1)$.\footnote{Note that $\|\cdot\|$ denotes the product Hilbert space norm and we view $\tau_1\otimes 1_H$ as the orthogonal projection onto $\mathbb{C} 1_{K_1}\otimes H$ that is given on elementary tensors via $a\otimes\xi\mapsto\tau_1(a)\otimes\xi$.}
		\item \label{2.Schur} If $\rho_1$, $\rho_2$ have no isomorphic subrepresentations, then $\|T\|<\delta$. 
	\end{enumerate}
\end{lemma}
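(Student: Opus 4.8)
The plan is to recognize the entire statement as a single application of Lemma~\ref{L.bekka} to the amplified representation $\sigma\otimes 1_H$ on the Hilbert space $\cB(K_1,K_2)\hat{\otimes} H$, where the space of invariant vectors is pinned down by the finite-dimensional Schur Lemma. First I would record that $\sigma=\sigma_{\rho_1,\rho_2}$ really is a unitary representation for the normalized Hilbert--Schmidt inner product: since each $\rho_j(g)$ is unitary, the conjugation $T\mapsto \rho_2(g)T\rho_1(g)^*$ preserves $\tau(TT^*)$ by traciality of $\tau$. The key observation is then that a vector $T\in\cB(K_1,K_2)$ is $\sigma$-invariant precisely when $\rho_2(g)T=T\rho_1(g)$ for all $g\in\Gamma$, i.e. exactly when $T$ is a $\Gamma$-intertwiner from $\rho_1$ to $\rho_2$. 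Thus the invariant subspace $H^\sigma$ coincides with $\mathrm{Hom}_\Gamma(\rho_1,\rho_2)$.

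Next I would compute $H^\sigma$ in the two cases using Schur's Lemma in finite dimensions. In case~\ref{1.Schur}, with $\rho_1=\rho_2=\rho$ irreducible, the only intertwiners are the scalar multiples of $1_{K_1}$, so $H^\sigma=\mathbb{C}1_{K_1}$; a short computation with the normalized trace (as in the footnote to~\ref{1.Schur}) shows that the orthogonal projection onto this line is exactly $a\mapsto\tau_1(a)1_{K_1}$, so that $q^\sigma=\tau_1(\cdot)1_{K_1}$. In case~\ref{2.Schur}, decomposing $\rho_1$ and $\rho_2$ into irreducibles and applying Schur componentwise shows that the absence of isomorphic subrepresentations forces $\mathrm{Hom}_\Gamma(\rho_1,\rho_2)=0$, hence $H^\sigma=0$ and $q^\sigma=0$.

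I would then transfer this to $\sigma\otimes 1_H$. Writing an arbitrary vector of $\cB(K_1,K_2)\hat{\otimes} H$ in terms of an orthonormal family in the $H$-variable and using that $1_H$ acts trivially, one checks that the invariant vectors of $\sigma\otimes 1_H$ are precisely $H^\sigma\hat{\otimes} H$, so the associated invariant projection is $q^{\sigma\otimes 1_H}=q^\sigma\otimes 1_H$. The hypothesis $\max_{g\in F}\|(\sigma_g\otimes 1_H)(T)-T\|<\varepsilon\delta$ states exactly that $T$ is scaled $(F,\delta\varepsilon)$-invariant for $\sigma\otimes 1_H$, so Lemma~\ref{L.bekka} yields $\|T-(q^\sigma\otimes 1_H)(T)\|<\delta$. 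Substituting $q^\sigma=\tau_1(\cdot)1_{K_1}$ in case~\ref{1.Schur} gives $\|T-(\tau_1\otimes 1_H)(T)\|<\delta$, while $q^\sigma=0$ in case~\ref{2.Schur} gives $\|T\|<\delta$, as required.

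The conceptual content lies entirely in this reduction; property (T) enters only through the black-boxed Lemma~\ref{L.bekka}, and the representation theory is all finite-dimensional, so I do not expect a genuine obstacle. The only steps demanding care are the verification that $\sigma$ is unitary for the chosen normalization (so that Lemma~\ref{L.bekka} applies verbatim) and the identification $q^{\sigma\otimes 1_H}=q^\sigma\otimes 1_H$, which is precisely where the triviality of the action on the auxiliary Hilbert space $H$ is used.
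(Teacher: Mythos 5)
Your proposal is correct and follows essentially the same route as the paper's own proof: both arguments apply Lemma~\ref{L.bekka} to the amplified representation $\sigma\otimes 1_H$, use the factorization of the invariant subspace as $\cB(K_1,K_2)^\sigma\hat{\otimes} H$ (so the invariant projection is $Q\otimes 1_H$), and then compute $\cB(K_1,K_2)^\sigma$ by finite-dimensional Schur's Lemma --- identifying it with $\bbC 1_{K_1}$, whose orthogonal projection is $\tau_1(\cdot)1_{K_1}$, in case~\ref{1.Schur}, and with $\{0\}$ after decomposing $\rho_1$ into irreducibles in case~\ref{2.Schur}. The only cosmetic difference is that you explicitly verify unitarity of $\sigma$ and phrase the invariant vectors as intertwiners, which the paper leaves implicit.
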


\begin{proof} Since $\oneHS\cdot\leq \|\cdot\|$, we have
	\begin{equation}\label{Eq.HS-inequality}
		\max_{g\in F} \oneHS{T\rho_1(g)-\rho_2(g)T}<\varepsilon\delta
	\end{equation}
	Consider the space $\cB(K_1,K_2)^\sigma$ of vectors invariant for $\sigma$, and let $Q$ be the projection to this space.
	Note that we have 
	\[
	\big(\cB(K_1,K_2)\hat{\otimes} H\big)^{\sigma\otimes 1_H}=\cB(K_1,K_2)^\sigma\hat{\otimes} H
	\] 
	with orthogonal projection $Q\otimes 1_H$.
	Since $(F,\varepsilon)$ is a Kazhdan pair for $\Gamma$, by~\eqref{Eq.HS-inequality} and  Lemma~\ref{L.bekka} we have 
	\begin{equation}\label{Eq.Q}
		\|T-(Q\otimes 1_H)(T)\|<\delta. 
	\end{equation}
	Now consider the two specific cases from the statement of the lemma. 
	
\ref{1.Schur}:
If $\rho_1=\rho_2$ is irreducible, then (identifying scalars with  scalar matrices), Schur's Lemma (\cite[Lemma~3.7.7]{higson2000analytic}) implies $\cB(K_1,K_1)^\sigma=\bbC 1_{K_1}$ and $Q(T)=\tau_1(T)$. Therefore \eqref{Eq.Q} reduces to $\|T-(\tau_1\otimes 1_H)(T)\|<\delta$.  
	
\ref{2.Schur}: 
We claim $\cB(K_1,K_2)^\sigma=\{0\}$ or equivalently $Q=0$.
	Since $K_1$ is finite-dimensional, $\rho_1$ is a direct sum of irreducible representations, say $(\rho_1,K_1)=\bigoplus_{\ell=1}^k (\rho^\ell_1, K_1^\ell)$.
	Then one gets a $\sigma$-equivariant direct sum decomposition $\cB(K_1,K_2)\cong\bigoplus_{\ell=1}^k \cB(K_1^\ell,K_2)$, which in particular implies $\cB(K_1,K_2)^\sigma \cong \bigoplus_{\ell=1}^k \cB(K_1^\ell,K_2)^{\sigma}$.
	Then Schur's Lemma (\cite[Lemma~3.7.7]{higson2000analytic}) implies that there are no nontrivial intertwiners for $\rho_1^\ell$ and $\rho_2$, $\ell=1,\dots,k$, which leads to $\cB(K_1,K_2)^\sigma=\{0\}$.
	Hence $Q(T)=0$ and \eqref{Eq.Q} reduces to  $\|T\|<\delta$. 
\end{proof} 

If $A$ is a \cstar-algebra, $H$ is a Hilbert space, and $\cK$ is the ideal of compact operators on $\cB(H)$, then $\cB(H)\cong \cM(\cK)$ can be identified with a subalgebra of $\cM(\cK\otimes A)$ for any \cstar-algebra $A$ via
\begin{equation}\label{eq.B(H)-inside-M(KotimesA)}
	\cM(\cK)\otimes 1_{\cM(A)}\subseteq \cM(\cK\otimes A). 
\end{equation}
The special case $A=\bbC$ within the following lemma is included in the proof of  \cite[Proposition~2.4]{farah2023calkin}. 

\begin{lemma} \label{L.Schur.spicy.version} 
	Suppose $\Gamma$ is a property (T) group with Kazhdan pair $(F,\varepsilon)$.
	Let $\rho_n: \Gamma\to\cU(H_n)$ be a sequence of pairwise inequivalent irreducible unitary representations with $m_n=\dim(H_n)<\infty$.
	Let $\rho$ be the direct sum representation on $H=\bigoplus_{n=1}^\infty H_n$ and let $p_n$ be the orthogonal projection onto $H_n$ for each $n\geq 1$.
	Let $A$ be a \cstar-algebra.
	As in \eqref{eq.B(H)-inside-M(KotimesA)}, we identify $\mathcal B(H)$ with a subalgebra of $\mathcal M(\cK\otimes A)$ and, with a slight abuse of notation, 
	identify $p_n$ with $p_n\otimes 1_{\cM(A)}$.  %
	Suppose that $n\geq 1$ is a natural number and a multiplier $T\in\mathcal M(\cK\otimes A)$ satisfies
	\[
	\max_{g\in F} \| \big( \rho(g)T\rho(g)^*-T \big)p_n\|<\varepsilon\delta.
	\]
	Then we have $\|Tp_n-p_n\otimes \cE_{m_n}(p_nTp_n)\|_{m_n,A}\leq 2\delta$.
\end{lemma}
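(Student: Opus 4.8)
The plan is to reduce the statement about multipliers of $\cK\otimes A$ to the Hilbert-space version already established in Lemma~\ref{L.Schur}, by passing through the right $A$-Hilbert module picture. First I would fix the finite-dimensional space $H_n$ with $m_n=\dim(H_n)$, and consider the multiplier $T\in\cM(\cK\otimes A)$ as an adjointable $A$-linear operator. Cutting down by $p_n=p_n\otimes 1$, the corner $p_n\cM(\cK\otimes A)p_n$ is canonically identified with $M_{m_n}(\cM(A))=\cM(M_{m_n}(A))=\mathcal L_A(A^{\oplus m_n})$, and under this identification the conditional expectation $\cE_{m_n}$ and the norm $\|\cdot\|_{m_n,A}$ are exactly the objects set up before the lemma. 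So the goal becomes an estimate $\|p_nTp_n-p_n\otimes\cE_{m_n}(p_nTp_n)\|_{m_n,A}\leq 2\delta$ for the single corner, \emph{plus} control of the off-diagonal part $Tp_n-p_nTp_n$, which must be shown to be small in $\|\cdot\|_{m_n,A}$ as well. The triangle inequality then splits the required bound into a diagonal contribution and an off-diagonal one, each to be bounded by roughly $\delta$.

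Next I would feed the hypothesis into the abstract Schur lemma. The point is that $\|\cdot\|_{m,A}$ is, state by state, a Hilbert--Schmidt norm: for every state $\varphi\in\cS(A)$ with GNS Hilbert space $H_\varphi$, the module $A^{\oplus m}$ maps into $K_{m}\otimes H_\varphi$ (where $K_m=\bbC^m$ carries $\rho_n$), and one checks that $\|X\|_{m,A}=\sup_\varphi \kHS{X_\varphi}$ by Lemma~\ref{L.state-in-M(A)} applied to $\cE_{m}(X^*X)\in\cM(A)$. Under this GNS representation, the equivariance hypothesis $\max_{g\in F}\|(\rho(g)T\rho(g)^*-T)p_n\|<\varepsilon\delta$ becomes, for each fixed $\varphi$, an inequality of the form $\max_{g\in F}\|(\sigma_g\otimes 1_{H_\varphi})(T_\varphi)-T_\varphi\|<\varepsilon\delta$ in the sense of Lemma~\ref{L.Schur}, where $T_\varphi$ is the image of $p_nTp_n$ and $\sigma=\sigma_{\rho_n,\rho_n}$. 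Since $\rho_n$ is irreducible, part \ref{1.Schur} of Lemma~\ref{L.Schur} yields $\|T_\varphi-(\tau_{1}\otimes 1_{H_\varphi})(T_\varphi)\|<\delta$ for each $\varphi$, and $(\tau_{1}\otimes 1)(T_\varphi)$ is precisely the GNS image of $p_n\otimes\cE_{m_n}(p_nTp_n)$; taking the supremum over $\varphi$ gives the diagonal bound $\|p_nTp_n-p_n\otimes\cE_{m_n}(p_nTp_n)\|_{m_n,A}<\delta$.

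The off-diagonal estimate is handled similarly but using part \ref{2.Schur}. The block $Tp_n-p_nTp_n$ corresponds, in each $\varphi$-picture, to operators $T^{(k)}_\varphi\in\cB(K_{m_n},K_{m_k})\hat\otimes H_\varphi$ for $k\neq n$; since $\rho_n$ and $\rho_k$ are inequivalent irreducibles they share no isomorphic subrepresentation, so part \ref{2.Schur} gives $\|T^{(k)}_\varphi\|<\delta$ uniformly, and assembling these over $k$ and taking the supremum over $\varphi$ yields $\|Tp_n-p_nTp_n\|_{m_n,A}\leq\delta$. Adding the two contributions and using the triangle inequality produces the asserted bound $2\delta$. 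The main obstacle I anticipate is the \emph{bookkeeping of the identification} $\|\cdot\|_{m_n,A}=\sup_\varphi\kHS{\cdot}$ and verifying that the single finite-$\varepsilon\delta$ hypothesis on $p_n$-cutdowns simultaneously controls all blocks $T^{(k)}_\varphi$ over the \emph{infinite} index set $k\geq 1$ uniformly; here one must be careful that the off-diagonal representation $\sigma_{\rho_n,\rho_k}$ is the one relevant to Lemma~\ref{L.Schur} and that the Kazhdan pair $(F,\varepsilon)$ is fixed and independent of $k$ and of $\varphi$, so that the bound $\delta$ does not degrade as $k$ varies. Once that uniformity is secured, the passage from the per-state Hilbert-space estimates to the module norm is routine via Lemma~\ref{L.state-in-M(A)}.
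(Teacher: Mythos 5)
Your overall strategy---reducing to Lemma~\ref{L.Schur} state by state via the GNS construction, splitting $Tp_n$ into the diagonal corner $p_nTp_n$ and the off-diagonal part, and invoking parts \ref{1.Schur} and \ref{2.Schur} respectively---is the same as the paper's, and your treatment of the diagonal corner is essentially correct. The gap is in the off-diagonal estimate. You apply Lemma~\ref{L.Schur}\ref{2.Schur} to each block $T^{(k)}_\varphi$ ($k\neq n$) separately, obtaining $\|T^{(k)}_\varphi\|<\delta$ for each $k$, and then claim that ``assembling these over $k$'' yields $\|Tp_n-p_nTp_n\|_{m_n,A}\leq\delta$. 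This assembly is invalid: the norm in question is an $\ell^2$-type (Hilbert--Schmidt) norm, so the square of the norm of the whole column is the \emph{sum} of the squares of the block norms,
\[
\|(1-p_n)Tp_n\|_{n,\varphi}^2=\sum_{k\neq n}\|T^{(k)}_\varphi\|^2,
\]
and uniform per-block bounds of $\delta$ give no control at all over an infinite index set (indeed, not even the bound $\delta$ over two blocks). The uniformity of the Kazhdan pair in $k$, which you flag as the main anticipated obstacle, is not the problem; the problem is that per-block smallness does not sum in an $\ell^2$ norm.

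The fix, and the route the paper takes, is to treat the entire off-diagonal column as a \emph{single} vector and apply part \ref{2.Schur} exactly once, with $\rho_1=\rho_n$ and $\rho_2$ the direct sum of the remaining representations, which shares no isomorphic subrepresentation with $\rho_n$ because the $\rho_k$ are pairwise inequivalent irreducibles. Since Lemma~\ref{L.Schur} is stated only for finite-dimensional $K_1$ and $K_2$, this in turn requires a preliminary truncation that your proposal also omits: using strict continuity of $\cE_{m_n}$ on bounded sets, one first shows that $\|x\|_{m_n,A}=\lim_{N\to\infty}\|P_Nx\|_{m_n,A}$ for $x\in\cM(\cK\otimes A)p_n$, where $P_N=\sum_{j\leq N}p_j$, so that it suffices to prove the estimate for $T\in P_N\cM(\cK\otimes A)p_n$; then $K_2=\bigoplus_{k\leq N,\,k\neq n}H_k$ is finite-dimensional, and the column-level invariance hypothesis needed for part \ref{2.Schur} follows from the same computation you carry out blockwise, since cutting the hypothesis on the left by the contraction $P_N-p_n$ preserves the $\varepsilon\delta$ bound. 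With these two corrections your argument becomes the paper's proof.
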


\begin{proof}
	Given that $\cE_{m_n}$ is strictly continuous on bounded sets and $P_N:=\sum_{j=1}^N p_j$ is an increasing sequence that strictly converges to the unit,  for any $x\in\mathcal M(\cK\otimes A)p_n$ we have that $\cE_{m_n}(x^*P_Nx)$ is increasing and converges to $\cE_{m_n}(x^*x)$ strictly.
	Thus
	\[
	\lim_{N\to\infty} \|P_Nx\|_{m_n,A} = \lim_{N\to \infty}\| \cE_{m_n}(x^*P_N x)\|^{1/2} = \|x\|_{m_n,A},\quad x\in\mathcal M(\cK\otimes A)p_n.
	\]
	Applying this to $x=Tp_n$ and given that the range of $\rho$ pointwise commutes with all the projections $p_j$, it suffices to show the claim for $N\geq n$ and $T\in P_N\mathcal M(\cK\otimes A)p_n$.
	So let us assume that this is the case, and let us fix the numbers $N$ and $n$ for the rest of the proof.
	We may identify $\mathcal M(\cK\otimes A)$ with $\mathcal L_A( H\otimes_{\mathbb C} A)$, where $H\otimes_{\mathbb C} A$ is to be understood as the internal tensor product (see \cite[\S II.7.4]{Black:Operator}).
	Set $K=\bigoplus_{k=1}^N H_k$.
	Because the scalar projections~$p_n$ and $P_N$ have finite rank,  there is a bijective linear map
	\[
	\Phi: \cB( H_n,K)\otimes \mathcal M(A) \to \mathcal L_A( H_n\otimes_{\mathbb C} A, K \otimes_{\mathbb C} A) = P_N\mathcal M(\cK\otimes A)p_n
	\]
given on elementary tensors  by
	\[
	\Phi(Z\otimes x)(\xi\otimes a) = Z\xi\otimes xa.
	\]
	for all $Z\in\cB(H_n,K)$, $x\in\mathcal M(A)$, $\xi\in H_n$ and $a\in A$.
	One easily sees from this formula that $\Phi$ restricts further to a bijection between $\cB(H_n,H_n)\otimes \mathcal M(A)$ and $p_n\mathcal M(\cK\otimes A)p_n$. 
	
	Given $\phi\in S(A)$, define the semi-norm $\|\cdot\|_{n,\phi}$ on $\mathcal M(\cK\otimes A)p_n$ via 
	\[
	\|x\|_{n,\phi}=(\tau_{m_n}\otimes\phi)(x^*x)^{1/2}.
	\]
Using Lemma~\ref{L.state-in-M(A)}, it follows for any $x\in\mathcal M(\cK\otimes A)p_n$ that
	\begin{equation} \label{eq:norm-supremum}
		\|x\|_{m_n,A}^2 = \|\cE_{m_n}(x^*x)\| = \sup_{\phi\in S(A)} \phi(\cE_{m_n}(x^*x)) = \sup_{\phi\in S(A)} \|x\|_{n,\phi}^2.
	\end{equation}
	For the rest of the proof we fix some $\phi\in S(A)$.
	Under the identification $\Phi$ above, for any $X=\sum_{j=1}^k Z_j\otimes x_j$ in $\cB( H_n,K)\otimes \mathcal M(A)$ we have that
	\[
	\Phi(X)^*\Phi(X)=\sum_{i,j=1}^k (Z_i^* Z_j)\otimes (x_i^*x_j) \ \in \ p_n\mathcal M(\cK\otimes A)p_n, 
	\]
	and hence
	\begin{equation} \label{eq:Hphi-norm}
		(\tau_{m_n}\otimes\phi)(\Phi(X)^*\Phi(X)) = \sum_{i,j=1}^k \tau_{m_n}(Z_i^*Z_j) \phi(x_i^*x_j).
	\end{equation}
	Let $H_\phi$ be the Hilbert space obtained from $(A,\phi)$ via the GNS-construction.
	If we view $\cB(H_n,K)$ as a Hilbert space with the normalized Hilbert--Schmidt norm as before, then we have a canonical contractive linear map ($\hat\otimes$ is the Hilbert space tensor product)
	\[
	\Theta^\phi: \cB(H_n,K)\otimes\mathcal M(A) \to  \cB(H_n,K)\hat{\otimes} H_\phi.
	\]
	Considering how the norm on the tensor product Hilbert space is defined, condition \eqref{eq:Hphi-norm} directly shows
	\[
	\|\Phi(X)\|_{n,\phi}=\|\Theta^{\phi}(X)\| \quad\text{for all } X\in \cB( H_n,K)\otimes_{\mathbb{C}} \mathcal M(A),
	\]
	or equivalently, since $\Phi$ is an isomorphism, 
	\begin{equation} \label{eq:norm-compatibility}
		\|(\Theta^\phi\circ\Phi^{-1})(x)\| = \|x\|_{n,\phi} \quad\text{for all } x\in P_N \mathcal M(\cK\otimes A)p_n.
	\end{equation}
	If we restrict $(\Theta^\phi\circ\Phi^{-1})$ to $p_n\mathcal M(\cK\otimes A)p_n$, then we can observe that the diagram
	\begin{equation} \label{eq:the-diagram}
		\xymatrix{
			p_n \mathcal M(\cK\otimes A)p_n \ar[d]^{\cE_{m_n}} \ar[r]^{\Phi^{-1}} &\cB(H_n,H_n)\otimes\mathcal M(A)\ar[r]^{\Theta^\phi} & \cB(H_n,H_n) \hat{\otimes} H_\phi \ar[d]^{\tau_{m_n}\otimes 1_H} \\
			p_n\otimes\mathcal M(A) \ar[rr] && \bbC p_n\hat{\otimes} H_\phi
		}
	\end{equation}
commutes, where the lower horizontal arrow is the linear contractive map induced from the GNS-construction.
We note furthermore that the map $\Theta^\phi\circ\Phi^{-1}$ is $\rho$-equivariant in the obvious sense, putting us in the position to apply Lemma \ref{L.Schur}.
We consider the unitary representation 
\[
	\sigma_{n,\phi}: \Gamma\to\mathcal U(\cB(H_n,K)\hat{\otimes} H_\phi),\quad \sigma_{n,\phi}(g)(Z\otimes\xi):=(\rho(g)Z\rho_n(g)^*)\otimes\xi
\]
for all $g\in\Gamma$, $Z\in\cB(H_n,K)$ and $\xi\in H_\phi$.
Set $p_n^\perp=P_N-p_n$ and consider 
\begin{align*}
\eta_1:=(\Theta^\phi\circ\Phi^{-1})(p_nT) \text{ and } \eta_2:=(\Theta^\phi\circ\Phi^{-1})(p_n^\perp T). 
\end{align*} 
We get the inequality 
\[
	\max_{g\in F} \max_{q\in\{p_n, p_n^\perp\}}\| q\rho(g)T\rho_n(g)^* - qT \|_{n,\phi} \leq \max_{g\in F} \| \rho(g)T\rho(g)^*-T \|<\varepsilon\delta
\]
which via \eqref{eq:norm-compatibility} directly translates   to
\begin{equation} \label{eq:eta-invariance}
	\max_{g\in F} \ \max_{j=1,2} \| \sigma_{n,\phi}(g)\eta_j - \eta_j \|<\varepsilon\delta.
\end{equation}
On the one hand, we have $\eta_1\in\cB(H_n,H_n)\hat{\otimes} H_\phi$ and $\sigma_{n,\phi}$ is of the form as given in Lemma \ref{L.Schur}\ref{1.Schur}.
Thus
\[
	\|\eta_1-(\tau_{m_n}\otimes 1_{H_\phi})\eta_1\|<\delta.
\]
In light of the commuting diagram \eqref{eq:the-diagram}, this is the same as
\[
	\|p_nT-\cE_{m_n}(p_nT)\|_{n,\phi}<\delta.
\]
On the other hand, we have $\eta_2\in\cB(H_n,K\ominus H_n)$ and on this space $\sigma_{n,\phi}$ is of the form as given in Lemma \ref{L.Schur}\ref{2.Schur}.
This lemma, together with \eqref{eq:eta-invariance}, implies $\|\eta_2\|<\delta$, which with \eqref{eq:norm-compatibility} implies $\|p_n^{\perp} T\|_{n,\phi}<\delta$.
All in all we obtain 
\[
	\|T-p_n\otimes\cE_{m_n}(p_nT)\|_{n,\phi}<2\delta.
\]
Since the state $\phi$ was arbitrarily chosen, condition \eqref{eq:norm-supremum} yields 
\[
	\|T-p_n\otimes\cE_{m_n}(p_nT)\|_{m_n,A} \leq 2\delta
\]
and proves the claim.
\end{proof}

\section{Property (T) groups inside stable coronas} \label{S.Gamma.A}

The main result of this section is Theorem~\ref{T.Psi+}.
For any non-zero \cstar-algebra $A$, it implies a transfer result for (relatively) approximately central sequences from $\cQ(\cK\otimes A)$ to $\cM(A)$.
As a consequence we obtain Corollary~\ref{C.Psi}, which represents the most difficult implication in Theorem~\ref{T.main}.

The proof of Theorem~\ref{T.Psi+} is based on the reinterpretation of Wassermann’s proof that if $\Gamma$ is a property (T) group with infinitely many finite-dimensional representations, then $\Ext(\cstar(\Gamma))$ is not a group (\cite{wassermann1991c}) given in \cite[\S 2]{farah2023calkin}, and some overlap with the text of the latter is included for the reader’s convenience. 

\begin{notation}
Suppose $\Gamma$ is a property (T) group with infinitely many inequivalent irreducible representations on finite-di\-men\-sio\-nal Hilbert spaces (e.g., a residually finite property (T) group such as $\SL_3(\bbZ)$).
Fix a Kazhdan pair,  $F\Subset \Gamma$ and $\varepsilon>0$.
Let $\rho_n$, for $n\in \bbN$, be an enumeration of irreducible representations of $\Gamma$ on finite-dimensional Hilbert spaces.
Since there are only finitely many inequivalent representations of $\Gamma$ on every fixed finite-dimensional Hilbert space $H$ (\cite[Corollary~3]{wassermann1991c}), we may choose the enumeration so that the sequence $m_n:=\dim(H_n)$ is nondecreasing, and necessarily $m_n\to \infty$ as $n\to \infty$.
For each $n\geq 1$, let $p_n$ be the Kazhdan projection associated with $\rho_n$ in $\cstar(\Gamma)$. 
Each representation $\rho_n$ of $\Gamma$ on $H_n$ uniquely extends to a representation of the full group algebra  $\cstar(\Gamma)$ on the same space.
We will slightly abuse the notation and denote the latter by~$\rho_n$.  
Let $\rho=\bigoplus_n \rho_n$, and $H:=\bigoplus_n H_n$.
Then $H$ is an infinite-dimensional, separable Hilbert space and the \cstar-algebra generated by $\rho(\Gamma)$ contains all~$p_n$ (the orthogonal projection onto $H_n$), for $n\in \bbN$.
\end{notation} 

\begin{definition} \label{D.W(Gamma)}
Let $\Gamma$ be a countable group with property (T) and with infinitely many irreducible representations $\rho_n\colon \Gamma\to \cB(H_n)$, for $n\in \bbN$, on finite-dimensional Hilbert spaces.
Let $\rho=\bigoplus_n \rho_n$ be the direct sum representation on $H=\bigoplus_n H_n$.
Given a non-zero \cstar-algebra $A$, as in \eqref{eq.B(H)-inside-M(KotimesA)} we identify $\cB(H)$ with a subalgebra of  $\mathcal M(\cK\otimes A)$.
In this way we can define the unitary representation $\bar{\rho}: \Gamma\to\mathcal U(\mathcal Q(\cK\otimes A))$ via 
\[
\bar{\rho}(g)=(\rho(g)\otimes 1) + \cK\otimes A
\]
for all $g\in\Gamma$.
\end{definition}

\begin{definition} \label{def:the-map}
Adopt the notation of Definition \ref{D.W(Gamma)} and write $p_n$ for the orthogonal projection onto $H_n$.
We define an embedding 
\[
\iota: \ell^\infty(\mathbb N,\mathcal M(A))\to\mathcal M(\cK\otimes A) \text{ via } \iota((x_n)_n)=\sum_{n=1}^\infty p_n\otimes x_n,
\]
where the limit of partial sums is taken in the strict topology.
We consider a conditional expectation $\cE$ from $\cM(\cK\otimes A)$ onto the image of $\iota$ by (using the conditional expectation $\cE_{m_n}\colon M_{m_n}(\cM(A))\to\cM(A)$ as in \eqref{eq.Em}):
\[
\cE(x)=\sum_{n=1}^\infty p_n\otimes \cE_{m_n}(p_nxp_n),
\]
where the limit is again taken in the strict topology.   
We observe that for $x\in\cK\otimes A$, the sequence $p_n x p_n\in A$ converges to zero in norm, hence $\cE(\cK\otimes A)=\iota(c_0(A))$.
This gives rise to a unital completely positive map $\Psi: \cQ(\cK\otimes A)\to\cQ(c_0(A))=\ell^\infty(\mathbb N,\mathcal M(A))/c_0(\bbN, A)$ defined by 
\begin{equation}\label{eq.Psi}
	\Psi(x+\cK\otimes A)=(\iota^{-1}\circ\cE)(x)+c_0(A).
\end{equation}
By $\bar{\Psi}: \cQ(\cK\otimes A)\to\cM(A)_\infty$ we denote  the composition of $\Psi$ with the quotient map $\cQ(c_0(A))\to\cM(A)_\infty$.
(Note that $\Psi=\bar{\Psi}$ if $A$ is unital.)
\end{definition}

\begin{lemma} \label{lem:multiplicativity} Assume $A$ is a non-zero \cstar-algebra and adopt the notation of both Definitions \ref{D.W(Gamma)} and \ref{def:the-map}.
Suppose that $(F,\varepsilon)$ is a Kazhdan pair for $\Gamma$.
Then it follows for every $z\in\cQ(\cK\otimes A)$ that
\[
\|\bar{\Psi}(z^*z)-\bar{\Psi}(z)^*\bar{\Psi}(z)\|\leq 8\varepsilon^{-1}\|z\|\max_{g\in F}\|\bar{\rho}(g)z-z\bar{\rho}(g)\|.
\]
In particular, the relative commutant $\cQ(\cK\otimes A)\cap\{\bar{\rho}(g)\}_{g\in\Gamma}'$ belongs to the multiplicative domain of $\bar{\Psi}$.
\end{lemma}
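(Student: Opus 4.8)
The plan is to prove the displayed inequality first and then read off the multiplicative-domain statement from it. Set $\delta := \max_{g\in F}\|\bar{\rho}(g)z - z\bar{\rho}(g)\|$ and lift $z$ to a multiplier $T\in\cM(\cK\otimes A)$ with $\|T\| = \|z\|$, which is possible because quotient norms of \cstar-algebras are attained. Since $\bar{\rho}(g)$ and $z$ are the images of $\rho(g)\otimes 1$ and $T$, the commutator $\bar{\rho}(g)z - z\bar{\rho}(g)$ is the image of $[\rho(g),T]$, so $\operatorname{dist}([\rho(g),T],\cK\otimes A)\leq\delta$ for every $g\in F$. Writing $t_n := \cE_{m_n}(p_nTp_n)\in\cM(A)$, Definition~\ref{def:the-map} gives $\bar{\Psi}(z) = [(t_n)_n]$ in $\cM(A)_\infty$, and lifting $z^*z$ by $T^*T$ gives $\bar{\Psi}(z^*z) = [(\cE_{m_n}(p_nT^*Tp_n))_n]$. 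As $\bar{\Psi}$ is the composition of the unital completely positive map $\Psi$ with a quotient $*$-homomorphism, it is itself unital and completely positive, and the quantity to be bounded equals $\limsup_n\|\cE_{m_n}(p_nT^*Tp_n) - t_n^*t_n\|$.

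Next I would reduce this defect to a single $\|\cdot\|_{m_n,A}$-estimate. Viewing $\cM(\cK\otimes A) = \mathcal L_A(H\otimes A)$ and restricting to the relevant corner, the assignment $\langle X,Z\rangle := \cE_{m_n}(X^*Z)$ is an $\cM(A)$-valued (semi-)inner product on $\mathcal L_A(A^{\oplus m_n},H\otimes A)$ inducing the norm $\|\cdot\|_{m_n,A}$, so the generalized Cauchy--Schwarz inequality yields $\|\cE_{m_n}(X^*Z)\|\leq\|X\|_{m_n,A}\|Z\|_{m_n,A}$. Fixing $n$ and setting $X = Tp_n$ and $Y = p_n\otimes t_n$, one has $\cE_{m_n}(X^*X) = \cE_{m_n}(p_nT^*Tp_n)$ and $\cE_{m_n}(Y^*Y) = t_n^*t_n$, so the polarization $X^*X - Y^*Y = X^*(X-Y) + (X-Y)^*Y$ together with Cauchy--Schwarz gives
\[
\|\cE_{m_n}(p_nT^*Tp_n) - t_n^*t_n\|\leq \|X-Y\|_{m_n,A}\big(\|X\|_{m_n,A} + \|Y\|_{m_n,A}\big).
\]
Here $\|X\|_{m_n,A}^2 = \|\cE_{m_n}(p_nT^*Tp_n)\|\leq\|T\|^2 = \|z\|^2$ since $\cE_{m_n}$ is contractive, and $\|Y\|_{m_n,A} = \|t_n\|\leq\|z\|$ by \eqref{eq:generalized-HS-equality}; thus the bracket is at most $2\|z\|$, and it remains to control $\|Tp_n - p_n\otimes t_n\|_{m_n,A} = \|X-Y\|_{m_n,A}$.

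That quantity is exactly what Lemma~\ref{L.Schur.spicy.version} delivers, and verifying its hypothesis is the crux. Because $\rho(g)$ commutes with each $p_n$ and is unitary, a short computation gives $\|(\rho(g)T\rho(g)^* - T)p_n\| = \|[\rho(g),T]p_n\|$. As $[\rho(g),T]$ lies within $\delta$ of $\cK\otimes A$, choosing $k_g\in\cK\otimes A$ with $\|[\rho(g),T] - k_g\|$ close to $\delta$ and using $\|k_gp_n\|\to 0$ (valid for every element of $\cK\otimes A$, since $p_n\to 0$ strictly and $k_g$ is a norm-limit of finite-rank-over-$A$ elements) gives $\limsup_n\max_{g\in F}\|[\rho(g),T]p_n\|\leq\delta$. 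Hence for every $\eta>0$ and all large $n$ the hypothesis of Lemma~\ref{L.Schur.spicy.version} holds with $\delta'' = \delta/\varepsilon + \eta$, so $\|Tp_n - p_n\otimes t_n\|_{m_n,A}\leq 2(\delta/\varepsilon + \eta)$; letting $\eta\to 0$ yields $\limsup_n\|X-Y\|_{m_n,A}\leq 2\delta/\varepsilon$. Combining with the previous display gives $\limsup_n\|\cE_{m_n}(p_nT^*Tp_n) - t_n^*t_n\|\leq (2\delta/\varepsilon)(2\|z\|)$, which is the asserted bound (indeed with room to spare relative to the stated constant $8\varepsilon^{-1}$).

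Finally, for $z$ in the relative commutant $\cQ(\cK\otimes A)\cap\{\bar{\rho}(g)\}_{g\in\Gamma}'$ one has $\delta = 0$, whence $\bar{\Psi}(z^*z) = \bar{\Psi}(z)^*\bar{\Psi}(z)$; since this \cstar-subalgebra is self-adjoint the same identity holds for $z^*$, and Choi's characterization of the multiplicative domain of the unital completely positive map $\bar{\Psi}$ places the entire relative commutant inside it. The main obstacle is precisely the transfer step of the third paragraph: marrying the corona-level (asymptotic) smallness of the commutators to the norm-level hypothesis required by the refined Schur lemma. This works exactly because passing to $\cM(A)_\infty$ discards the $c_0$-tail, along which the compact corrections $k_gp_n$ die out, so only the genuine commutator size $\delta$ survives in the $\limsup$.
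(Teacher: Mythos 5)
Your proof is correct, and it shares the paper's skeleton up to the decisive estimate: like the paper, you lift $z$ to $T$ with $\|T\|=\|z\|$, transfer the corona commutator bound to $\limsup_n\max_{g\in F}\|(\rho(g)T\rho(g)^*-T)p_n\|\leq\delta$ using strict convergence $p_n\to 0$, and feed this into Lemma~\ref{L.Schur.spicy.version}. Where you genuinely diverge is in how the defect $\|\cE_{m_n}(p_nT^*Tp_n)-t_n^*t_n\|$ is closed. The paper applies Lemma~\ref{L.Schur.spicy.version} \emph{three} times --- to $T$, to $T^*$, and to $T^*T$, the last requiring the extra observation $\|[\bar{\rho}(g),z^*z]\|\leq 2\|z\|\,\|[\bar{\rho}(g),z]\|$ and contributing the bound $4\|z\|\delta$ --- and then assembles these via a telescoping estimate using only the norm inequalities recorded in \eqref{eq:generalized-HS-equality} and \eqref{eq:generalized-HS-inequalities}, arriving at the constant $8$. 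You instead apply the Schur-type lemma \emph{once}, to $T$ alone, and close with the \cstar-module Cauchy--Schwarz inequality for the $\cM(A)$-valued pairing $\langle X,Z\rangle=\cE_{m_n}(X^*Z)$ applied to the polarization $X^*X-Y^*Y=X^*(X-Y)+(X-Y)^*Y$ with $X=Tp_n$, $Y=p_n\otimes t_n$. This buys you a shorter argument with no separate treatment of $T^*$ or $T^*T$, and the sharper constant $4\varepsilon^{-1}$ in place of $8\varepsilon^{-1}$ (the paper's statement only needs $8$, so both suffice); the cost is an appeal to the generalized Cauchy--Schwarz inequality, which the paper does not list among its recorded properties of $\|\cdot\|_{m,A}$, though it is standard and is invoked elsewhere (e.g., in the proof of Lemma~\ref{L.MTestimate}), so this is no real gap. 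Your explicit Choi-type argument for the ``in particular'' clause (using self-adjointness of the relative commutant to get both one-sided multiplicativity conditions) is also correct; the paper leaves this step implicit.
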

\begin{proof}
Let $\eta>0$.
Let $T\in\mathcal M(\cK\otimes A)$ be a multiplier with $z=T+\cK\otimes A$ and $\|T\|=\|z\|$. Choose $\delta>0$ such that
\[
\varepsilon\delta > \max_{g\in F}\|\bar{\rho}(g)z-z\bar{\rho}(g)\|.
\]
Since the sequence of projections $p_n$ converges to zero strictly and these projections commute with $\rho$ on the nose, we obtain
\[
\limsup_{n\to\infty} \| \big(\rho(g)T\rho(g)^*-T\big)p_n\| < \varepsilon\delta.
\]
By Lemma \ref{L.Schur.spicy.version}, we get 
\[
\limsup_{n\to\infty} \|Tp_n-p_n\otimes \cE_{m_n}(p_nTp_n)\|_{m_n,A} \leq 2\delta.
\]
The same holds if we insert $T^*$ in place of $T$.
Since we have
\[
\|\bar{\rho}(g)z^*z-z^*z\bar{\rho}(g)\|\leq 2\|z\| \|\bar{\rho}(g)z-z\bar{\rho}(g)\|,\quad g\in F,
\]
the analogous argument also shows
\[
\limsup_{n\to\infty} \|T^*Tp_n-p_n\otimes \cE_{m_n}(p_nT^*Tp_n)\|_{m_n,A} \leq 4\|z\|\delta.
\]
Using the elementary properties of the norm $\|\cdot\|_{m,A}$ stated in  \eqref{eq:generalized-HS-equality} and \eqref{eq:generalized-HS-inequalities}, we can compute
\[
\begin{array}{cl}
\multicolumn{2}{l}{ \|\cE_{m_n}(p_n T^*T p_n)-\cE_{m_n}(p_nT^*p_n)\cE_{m_n}(p_nTp_n)\| }\\
=& \|p_n\otimes\cE_{m_n}(p_n T^*T p_n)-p_n\otimes\cE_{m_n}(p_nT^*p_n)\cE_{m_n}(p_nTp_n)\|_{m_n,A} \\
=& \|p_n\otimes\cE_{m_n}(p_n T^*T p_n) - T^*Tp_n\|_{m_n,A} \\
&+ \| T^*Tp_n-T^*(p_n\otimes\cE_{m_n}(p_nTp_n)) \|_{m_n,A} \\
&+ \| T^*(p_n\otimes\cE_{m_n}(p_nTp_n)) - p_n\otimes\cE_{m_n}(p_nT^*p_n)\cE_{m_n}(p_nTp_n) \|_{m_n,A} \\
=& \|p_n\otimes\cE_{m_n}(p_n T^*T p_n) - T^*Tp_n\|_{m_n,A} \\
&+ \|T\|\cdot \| Tp_n-p_n\otimes\cE_{m_n}(p_nTp_n) \|_{m_n,A} \\
&+ \underbrace{\|\cE_{m_n}(p_nTp_n) \|}_{\leq\|T\|} \cdot \| T^*p_n - p_n\otimes\cE_{m_n}(p_nT^*p_n) \|_{m_n,A}
\end{array}
\]
From our earlier observations and $\|z\|=\|T\|$, this combines into
\begin{align*}
\|\bar{\Psi}(z^*z)-&\bar{\Psi}(z)^*\bar{\Psi}(z)\|  \\
&= \big\| \iota^{-1}\big( \cE(T^*T)-\cE(T)^*\cE(T) \big) + c_0(\cM(A)) \big\| \\
&= \displaystyle \limsup_{n\to\infty} \|\cE_{m_n}(p_n T^*T p_n)-\cE_{m_n}(p_nT^*p_n)\cE_{m_n}(p_nTp_n)\| \\
&\leq 8\|z\|\delta.
\end{align*}
Since we chose $\delta>\varepsilon^{-1}\max_{g\in F}\|\bar{\rho}(g)z-z\bar{\rho}(g)\|$ arbitrarily, this proves the claim.
\end{proof}

As a consequence, we can prove the following generalization of \cite[Proposition~2.3]{farah2023calkin}. 

\begin{theorem} \label{T.Psi+}
Suppose that $A$ is a nonzero \cstar-algebra.
Denote by $\pi: \cM(\cK\otimes A)\to\cQ(\cK\otimes A)$ the quotient map.
Then there exist:
	\begin{enumerate}[label=\textup{(\arabic*)}]
	\item a unital $*$-embedding $\Theta\colon \cQ(c_0(A))\to \cQ(\cK\otimes A)$ with $\Theta(\cM(A))=\pi(1\otimes\cM(A))$.
	\item a unital completely positive map $\Psi: \cQ(\cK\otimes A)\to \cQ(c_0(A))$ with $\Psi\circ\Theta=\id$.
	\item a separable unital \cstar-subalgebra $C$ of $\pi(\cM(\cK)\otimes 1)\cap\Theta(\cQ(c_0(A)))'$ (and hence of $\cQ(\cK\otimes A)$) such that after composing with the quotient map $\cQ(c_0(A))\to\cM(A)_\infty$, the restriction
	\[
	\bar{\Psi}: \cQ(\cK\otimes A)\cap C'\to\cM(A)_\infty
	\]
	as well as the restriction of the induced sequence algebra map\footnote{Again $C$ is identified with a \cstar-subalgebra of the diagonal copy of $\cQ(\cK\otimes A)$ inside $\cQ(\cK\otimes A)_\infty$.}
	\[
	\bar{\Psi}_\infty: \cQ(\cK\otimes A)_\infty\cap C'\to (\cM(A)_\infty)_\infty
	\]
	are $*$-homomorphisms.
	\end{enumerate} 
\end{theorem}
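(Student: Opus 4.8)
The plan is to obtain parts (1) and (2) by direct inspection of the constructions in Definition~\ref{def:the-map}, and then to pin down $C$ and read off part (3) from Lemma~\ref{lem:multiplicativity} together with the theory of multiplicative domains. First I would observe that $\iota$ is an isometric $*$-homomorphism: its blocks are mutually orthogonal since $p_np_m=\delta_{nm}p_n$, so $\|\iota((x_n))\|=\sup_n\|x_n\|$ and $\iota((x_n)(y_n))=\iota((x_n))\iota((y_n))$. Moreover $\iota$ carries $c_0(\bbN,A)$ into $\cK\otimes A$, and conversely, if $\iota((x_n))\in\cK\otimes A$ then compressing by $P_N\otimes 1$ and letting $N\to\infty$ (norm convergence of truncations holds on $\cK\otimes A$) forces $\sup_{m>N}\|x_m\|\to 0$, i.e.\ $(x_n)\in c_0(\bbN,A)$. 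Hence $\Theta:=\pi\circ\iota$ descends to an \emph{injective} $*$-homomorphism $\cQ(c_0(A))\to\cQ(\cK\otimes A)$, and since $\sum_n p_n=1_{\cM(\cK)}$ strictly, a constant sequence $a$ maps to $\pi(1\otimes a)$, giving $\Theta(\cM(A))=\pi(1\otimes\cM(A))$. This settles (1). For (2), $\Psi$ is unital and completely positive by construction, and $\Psi\circ\Theta=\id$ reduces to the identity $\cE\circ\iota=\iota$, which in turn follows from $\cE_{m_n}(1_{m_n}\otimes x_n)=\tau_{m_n}(1_{m_n})x_n=x_n$ under the identification $p_n\cM(\cK\otimes A)p_n\cong M_{m_n}(\cM(A))$.

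For part (3), I would take $C$ to be the separable unital \cstar-subalgebra generated by $\{\bar\rho(g)\mid g\in\Gamma\}$; this is separable because $\Gamma$ is countable and unital because $\bar\rho(e)=1$. Since $\rho(g)\in\cB(H)=\cM(\cK)$, each generator lies in $\pi(\cM(\cK)\otimes 1)$, whence $C\subseteq\pi(\cM(\cK)\otimes 1)$. Because each $\rho(g)$ commutes \emph{exactly} with every $p_n$, the multiplier $\rho(g)\otimes 1$ commutes with $\iota((x_n))=\sum_n p_n\otimes x_n$, so $\bar\rho(g)$ commutes with the range of $\Theta$, giving $C\subseteq\Theta(\cQ(c_0(A)))'$. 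As $C$ is generated by the $\bar\rho(g)$, we have $C'=\{\bar\rho(g)\}_{g\in\Gamma}'$.

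The statement about $\bar\Psi$ in part (3) is then immediate from Lemma~\ref{lem:multiplicativity}: its ``in particular'' clause says precisely that $\cQ(\cK\otimes A)\cap\{\bar\rho(g)\}'=\cQ(\cK\otimes A)\cap C'$ lies in the multiplicative domain of the u.c.p.\ map $\bar\Psi$, and a u.c.p.\ map restricts to a $*$-homomorphism on its multiplicative domain. For the sequence-algebra statement I would argue by hand using the \emph{quantitative} estimate of Lemma~\ref{lem:multiplicativity} coordinatewise. Given $z=[(z_n)]\in\cQ(\cK\otimes A)_\infty\cap C'$, commuting with the diagonal copy of $\bar\rho(g)$ means $\limsup_n\|\bar\rho(g)z_n-z_n\bar\rho(g)\|=0$ for every $g\in F$; applying the estimate to each $z_n$ and using $\sup_n\|z_n\|<\infty$ yields
\[
\limsup_{n\to\infty}\|\bar\Psi(z_n^*z_n)-\bar\Psi(z_n)^*\bar\Psi(z_n)\|\leq 8\varepsilon^{-1}\Big(\sup_n\|z_n\|\Big)\limsup_{n\to\infty}\max_{g\in F}\|\bar\rho(g)z_n-z_n\bar\rho(g)\|=0,
\]
that is, $\bar\Psi_\infty(z^*z)=\bar\Psi_\infty(z)^*\bar\Psi_\infty(z)$. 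Replacing $z$ by $z^*$ (which also lies in $\cQ(\cK\otimes A)_\infty\cap C'$) handles $zz^*$, so this \cstar-subalgebra lies in the multiplicative domain of the u.c.p.\ map $\bar\Psi_\infty$, and the restriction is a $*$-homomorphism as claimed.

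The genuine mathematical content has already been absorbed into Lemma~\ref{lem:multiplicativity} (and the Schur-type Lemmas~\ref{L.Schur} and \ref{L.Schur.spicy.version} behind it), so the present proof is mostly an assembly. The only step requiring real care is the transition to the sequence algebra in the last paragraph: one must make sure that commutator-smallness of $z$ with $C$ inside $\cQ(\cK\otimes A)_\infty$ is correctly translated into the coordinatewise hypotheses of Lemma~\ref{lem:multiplicativity} (this is representative-independent precisely because it is phrased via $\limsup$), and that the uniform bound on $\sup_n\|z_n\|$ lets the product on the right-hand side vanish in the limit.
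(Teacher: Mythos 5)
Your proposal is correct and follows essentially the same route as the paper's proof: the same choice $C=\cstar(\bar\rho[\Gamma])$, the same identification of $\cQ(c_0(A))$ with a subalgebra of $\cQ(\cK\otimes A)$ giving $\Theta$ and $\Psi\circ\Theta=\id$, and the same argument for multiplicativity, namely lifting an element of $\cQ(\cK\otimes A)_\infty\cap C'$ to a bounded sequence and applying the quantitative estimate of Lemma~\ref{lem:multiplicativity} coordinatewise (the paper simply treats $\bar\Psi$ as a restriction and corestriction of $\bar\Psi_\infty$ instead of invoking the ``in particular'' clause for it separately, an immaterial difference). One hairline point in your injectivity check for $\Theta$: $\sup_{m>N}\|x_m\|\to 0$ by itself only yields a $c_0$-sequence with values in $\cM(A)$, so you should add that compressing $\iota((x_n))\in\cK\otimes A$ by $p_n\otimes 1$ shows $1_{m_n}\otimes x_n\in M_{m_n}(A)$, hence each $x_n\in A$ and $(x_n)\in c_0(\bbN,A)$ as required.
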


\begin{tikzpicture}
	\matrix[row sep=15mm,column sep=20mm] {
		\node  (C') {$\cQ(\cK\otimes A)\cap C'$};
		& \node (QK) {$\cQ(\cK\otimes A)$};
		& \node  (Qc0) {$\cQ(c_0(A))$};\\
		\node (C'inf) {$\cQ(\cK\otimes A)_\infty\cap C'$};
		& \node (MAinf) {$(\cM(A)_\infty)_\infty$};
		&\node (MA) {$\cM(A)_\infty$};
		\\
	};
	\draw (Qc0) edge[->>] node [left] {}  (MA) ;
	%	\path [black,  out=210, in=150] (QK)  node [below] {$\Psi$} (Qc0) ;
	\path (Qc0)  edge[->, bend left]  node [above] {$\Theta$} (QK) ;
	\draw (QK) edge[->] node [above] {$\Psi$} (Qc0);
	\draw (C') edge [->] node [above] {$\subseteq$} (QK);
	\draw (C') edge [->] node [left] {\rotatebox[origin=c]{-90}{$\subseteq$}} (C'inf);
	\draw (MA) edge [->] node [above] {$\supseteq$} (MAinf);
	\draw (C') edge[->]  node [above] {$\bar\Psi$} (MA);
	\draw (C'inf) edge[->]  node [above] {$\bar{\Psi}_\infty$} (MAinf);
\end{tikzpicture}

\begin{proof} 
Choose a residually finite group $\Gamma$ with property (T) and construct the unitary representation $\bar{\rho}: \Gamma\to\mathcal U(\cQ(\cK\otimes A))$ as in Definition \ref{D.W(Gamma)}. It factors through $\pi(\cM(\cK)\otimes 1)$ by construction. 
Let $C:=\cstar(\bar{\rho}[\Gamma])$. 
	We consider the unital, completely positive, map $\Psi: \cQ(\cK\otimes A)\to\cQ(c_0(A))$ from \eqref{eq.Psi} of Definition~\ref{def:the-map}. 
	With $p_n$, for $n\in \bbN$, denoting the images of Kazhdan projections as in Definition~\ref{def:the-map}, we identify $\cQ(c_0(A))$ with 
	\[
	\prod_n (p_n\otimes \cM(A))/\bigoplus_n (p_n \otimes A).
	\]
This is a subalgebra of $\cQ(\cK\otimes A)$; the map $\Theta$ from the statement of the theorem is this identification.
The formula $\Psi\circ\Theta=\id$ is then evident from construction.
Clearly the range of $\Theta$ commutes pointwise with $C$.
It only remains to prove that the maps  $\bar{\Psi}$ and $\bar{\Psi}_\infty$ are multiplicative.  Equivalently, we need to prove that their restricted domains are included in their multiplicative domains. 
Since the first map can be seen as a restriction (and corestriction) of the second, it suffices to treat the latter.
	
Fix $y\in\cQ(\cK\otimes A)_\infty\cap C'$.
We lift $y$ to a bounded sequence $(y_n)_n$ in $\cQ(\cK\otimes A)$, which necessarily satisfies $\|[y_n,c]\|\to 0$ for all $c\in C$.
With $(F,\varepsilon)$ being the Kazhdan pair for $\Gamma$, Lemma~\ref{lem:multiplicativity} implies that 
	\begin{align*}
\|\bar{\Psi}_\infty(y^*y)-\bar{\Psi}_\infty(y)^*&\bar{\Psi}_\infty(y)\| \\ 
	=& \displaystyle \limsup_{n\to\infty} \| \bar{\Psi}(y_n^*y_n)- \bar{\Psi}(y_n)^* \bar{\Psi}(y_n) \| \\
	\leq& \displaystyle \limsup_{n\to\infty} 8\varepsilon^{-1}\|y_n\|\max_{g\in F}\|\bar{\rho}(g)y_n-y_n\bar{\rho}(g)\| \ = \ 0.
\end{align*}
%	\[
%	\begin{array}{cl}
%	\multicolumn{2}{l}{ \|\Psi_\infty(y^*y)-\Psi_\infty(y)^*\Psi_\infty(y)\| }\\ 
%	=& \displaystyle \limsup_{n\to\infty} \|\Psi(y_n^*y_n)-\Psi(y_n)^* \Psi(y_n) \| \\
%	\leq& \displaystyle \limsup_{n\to\infty} 4\varepsilon^{-1}\|y_n\|\max_{g\in F}\|\bar{\rho}(g)y_n-y_n\bar{\rho}(g)\| \ = \ 0.
%	\end{array}
%	\] 
This finishes the proof
\end{proof}
	
For the next statement, recall from Definition~\ref{Def.approx-central-embeddings} the notion of large approximately central maps. 
	
\begin{corollary} \label{C.Psi} 
Suppose that $A$ is a nonzero \cstar-algebra and $B$ is a separable, unital \cstar-algebra.
If $\cQ(\cK\otimes A)$ admits large approximately central maps from~$B$, then so does $\cM(A)$. 
In particular, if $\calD$ is a strongly self-absorbing \cstar-algebra and $\cQ(\cK\otimes A)$ is separably $\calD$-stable, then so is $\cM(A)$.
\end{corollary}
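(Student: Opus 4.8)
The plan is to deduce the first assertion directly from the machinery assembled in Theorem~\ref{T.Psi+}, and then to derive the ``in particular'' clause as an immediate application via Corollary~\ref{cor:axiomatizable}. Throughout I use the characterization of ``admitting large approximately central maps'' given by condition~\ref{P.approx-central-embeddings.1} of Proposition~\ref{P.approx-central-embeddings}, namely that for every separable \cstar-subalgebra of the sequence algebra there is a unital $*$-homomorphism from $B$ into the relative commutant modulo the annihilator. The key structural input is the separable unital \cstar-subalgebra $C\subseteq\cQ(\cK\otimes A)$ together with the $*$-homomorphisms $\bar\Psi$ and $\bar\Psi_\infty$ supplied by Theorem~\ref{T.Psi+}, which transfer relatively approximately central data from the stable corona to the multiplier sequence algebra.

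First I would set up the transfer. Let $A_0\subseteq\cM(A)_\infty$ be an arbitrary separable \cstar-subalgebra; my goal is to produce a unital $*$-homomorphism from $B$ into $(\cM(A)_\infty\cap A_0')/(\cM(A)_\infty\cap A_0^\perp)$, which by Proposition~\ref{P.approx-central-embeddings}\ref{P.approx-central-embeddings.1} (applied with $C$ there equal to $\cM(A)$) is exactly what is needed. The natural route is to pull $A_0$ back along $\Theta$: since $\Theta(\cQ(c_0(A)))$ surjects onto $\cM(A)_\infty$ after composing with the quotient $\cQ(c_0(A))\to\cM(A)_\infty$, and since the range of $\Theta$ lies in $\cQ(\cK\otimes A)$, I can lift $A_0$ to a separable \cstar-subalgebra $\tilde A_0\subseteq\cQ(\cK\otimes A)$. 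Enlarging $\tilde A_0$ to also contain $C$, I regard $A_1:=\cstar(\tilde A_0\cup C)$ as a separable subalgebra of the stable corona. Now I invoke the hypothesis: $\cQ(\cK\otimes A)$ admits large approximately central maps from $B$, so there is a unital $*$-homomorphism
\[
\psi: B\to (\cQ(\cK\otimes A)_\infty\cap A_1')/(\cQ(\cK\otimes A)_\infty\cap A_1^\perp).
\]
The point of including $C$ in $A_1$ is that the image of $\psi$ lands in the relative commutant of $C$, so that Theorem~\ref{T.Psi+}\,(3) guarantees $\bar\Psi_\infty$ is multiplicative there and can be applied to it.

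The central step is then to push $\psi$ forward through $\bar\Psi_\infty$ and check that the composite is again a unital $*$-homomorphism landing in the correct relative-commutant-mod-annihilator quotient over $A_0$. Because the elements realizing $\psi$ commute with $C$ asymptotically, they lie in $\cQ(\cK\otimes A)_\infty\cap C'$, which is precisely the domain on which $\bar\Psi_\infty$ is a $*$-homomorphism; composing a $*$-homomorphism with $\psi$ yields a $*$-homomorphism, so multiplicativity is automatic once one is in the multiplicative domain. The remaining bookkeeping is to verify that $\bar\Psi_\infty$ carries the relative commutant of $A_1$ into the relative commutant of $A_0$ (using $\bar\Psi_\infty\circ\Theta_\infty=\id$ to match $A_0$ with its preimage, together with the identity $\Theta(\cM(A))=\pi(1\otimes\cM(A))$ from Theorem~\ref{T.Psi+}\,(1)), and likewise that the annihilator of $A_1$ is sent into the annihilator of $A_0$; this is where the relation $\Psi\circ\Theta=\id$ and the explicit description of $\Theta$ as the inclusion of $\prod_n(p_n\otimes\cM(A))/\bigoplus_n(p_n\otimes A)$ do the work. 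I expect this compatibility check—confirming that $\bar\Psi_\infty$ respects the relevant commutants and annihilators, so that the induced quotient map is well defined and unital—to be the main obstacle, as it requires carefully tracking how the idempotent data of $\Theta$ interacts with $\bar\Psi_\infty$ rather than any hard analysis.

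For the final assertion, suppose $\calD$ is strongly self-absorbing and $\cQ(\cK\otimes A)$ is separably $\calD$-stable. By Corollary~\ref{cor:axiomatizable}\ref{2.cor:axiomatizable}, separable $\calD$-stability of $\cQ(\cK\otimes A)$ is equivalent to its admitting large approximately central maps from $\calD$. The first part of the corollary (with $B=\calD$) then yields that $\cM(A)$ admits large approximately central maps from $\calD$, and a second application of Corollary~\ref{cor:axiomatizable}\ref{2.cor:axiomatizable}, now to $\cM(A)$, translates this back into $\cM(A)$ being separably $\calD$-stable, as claimed.
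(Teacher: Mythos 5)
Your overall strategy --- transferring along Theorem~\ref{T.Psi+} by adjoining $C$ to the given subalgebra, composing with $\bar\Psi_\infty$, and quoting Corollary~\ref{cor:axiomatizable}\ref{2.cor:axiomatizable} for the ``in particular'' clause --- is the same as the paper's, and your treatment of the last clause is correct. But there is a genuine gap in the main step, and it is not the commutant/annihilator ``bookkeeping'' you flag as the obstacle. You aim to verify condition~\ref{P.approx-central-embeddings.1} of Proposition~\ref{P.approx-central-embeddings} for $\cM(A)$ directly: for an arbitrary separable $A_0\subseteq\cM(A)_\infty$, produce a unital $*$-homomorphism from $B$ into $(\cM(A)_\infty\cap A_0')/(\cM(A)_\infty\cap A_0^\perp)$. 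However, the codomain of $\bar\Psi_\infty$ is $(\cM(A)_\infty)_\infty$, the sequence algebra of the sequence algebra. Since $\bar\Psi\circ\Theta$ is the quotient map $\cQ(c_0(A))\to\cM(A)_\infty$, your composite $\bar\Psi_\infty\circ\psi$ lands in the relative commutant, inside $(\cM(A)_\infty)_\infty$, of the \emph{diagonal} copy of $A_0$ --- one level of sequence algebra higher than the required target. There is no natural $*$-homomorphism from that algebra into $(\cM(A)_\infty\cap A_0')/(\cM(A)_\infty\cap A_0^\perp)$: an approximately $A_0$-central sequence of elements of $\cM(A)_\infty$ need not be close to any single element of $\cM(A)_\infty$ commuting with $A_0$ modulo $A_0^\perp$. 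Bridging this mismatch is exactly as hard as the implication \ref{P.approx-central-embeddings.3}$\Rightarrow$\ref{P.approx-central-embeddings.1} of Proposition~\ref{P.approx-central-embeddings} (whose proof passes through the finitary condition~\ref{P.approx-central-embeddings.4}), so your plan as stated does not close.

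The paper sidesteps this by never quantifying over subalgebras of $\cM(A)_\infty$: it takes a separable $S\subseteq\cM(A)$, notes that $\pi(1\otimes S)=\Theta(S)$ commutes with $C$, obtains from the hypothesis a unital $*$-homomorphism $B\to\cQ(\cK\otimes A)_\infty\cap C'\cap\pi(1\otimes S)'$, composes with $\bar\Psi_\infty$ to land in $(\cM(A)_\infty)_\infty\cap S'$, and then identifies $(\cM(A)_\infty)_\infty$ with $\cM(A)_\cF$ for the iterated Fr\'echet filter $\cF$ on $\bbN\times\bbN$, in a way preserving the diagonal copy of $\cM(A)$. This verifies condition~\ref{P.approx-central-embeddings.3} --- the clause of the proposition that exists precisely to absorb the extra layer of sequence algebra --- and the proposition then finishes the job. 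Your argument can be repaired the same way: drop the lifting of $A_0$ through $\Theta$, work only with separable $S\subseteq\cM(A)$, and invoke condition~\ref{P.approx-central-embeddings.3} instead of~\ref{P.approx-central-embeddings.1}. (A minor additional slip: $\bar\Psi_\infty\circ\Theta_\infty$ is not the identity but the map on sequence algebras induced by the quotient $\cQ(c_0(A))\to\cM(A)_\infty$; only $\Psi\circ\Theta=\id$ holds on the nose.)
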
	
\begin{proof}
Let $\pi:\cM(\cK\otimes A)\to\cQ(\cK\otimes A)$ be the quotient map.
We apply Theorem~\ref{T.Psi+} and choose $\Theta$, $\Psi$, $\bar{\Psi}$, $\bar{\Psi}_\infty$,  and $C$ as provided by this theorem.
Let $S\subset\cM(A)$ be a separable \cstar-subalgebra.
By assumption, we may find a unital $*$-homomorphism from $B$ to $\cQ(\cK\otimes A)_\infty\cap C'\cap \pi(1\otimes S)'$.
By composing with the $*$-homomorphism $\bar{\Psi}_\infty: \cQ(\cK\otimes A)_\infty\cap C' \to (\cM(A)_\infty)_\infty$ and keeping in mind that $\bar{\Psi}_\infty$ sends $\pi(1\otimes S)$ to the diagonal image of $S$, we obtain a unital $*$-homomorphism
\[
B \to (\cM(A)_\infty)_\infty\cap S'.
\]
Since $(\cM(A)_\infty)_\infty$ is canonically isomorphic to $\cM(A)_\cF$ for the filter 
\[
\cF:=\{X\subseteq \bbN\times\bbN\mid (\forall^\infty m)(\forall^\infty n)(m,n)\in X\}
\]
 (in a way that preserves the diagonal copy of $\cM(A)$)  and since $S$ was arbitrary, it follows from Proposition~\ref{P.approx-central-embeddings} that $\cM(A)$ admits large approximately central maps from $B$.

The ``in particular'' part is a direct consequence of Corollary~\ref{cor:axiomatizable}.
\end{proof}

We are finally ready to combine all of our main results proven thus far:

\begin{proof}[Proof of Theorem~\ref{T.main}]
\ref{T.main.A-tensorially-absorbing}$\Rightarrow$\ref{T.main.Q-locally-absorbing} is a direct consequence of Proposition~\ref{P.sep-D-is-stable}\ref{P.sep-D-is-stable:2} and Theorem~\ref{T.Q-saturated.1}.
The implication \ref{T.main.Q-locally-absorbing}$\Rightarrow$\ref{T.main.M-locally-absorbing} is Corollary~\ref{C.Psi}.
The implication \ref{T.main.M-locally-absorbing}$\Rightarrow$\ref{T.main.A-tensorially-absorbing} is Proposition~\ref{P.A-in-M(A)}.
\end{proof}

It is not in general true that for any non-unital $\sigma$-unital \cstar-algebra $A$, separable $\calD$-stability of $\cQ(A)$ forces $A$ to be separably $\calD$-stable. 
For an easy counterexample, take for instance $A:=\bbC\oplus (\cK\otimes \calD)$.

\begin{remark}
If one considers the ucp map $\Psi$ from Theorem~\ref{T.Psi+}, then it is possible to show that the restriction $\Psi: \cQ(\cK\otimes A)\cap C'\to\cQ(c_0(A))$ is already a $*$-homomorphism, without having to project down onto $\cM(A)_\infty$. This requires a bit of extra work and we decided not to include the argument because, while being non-trivial, we found no interesting applications of this fact that are not already covered by what we proved above.
Furthermore, said argument does not seem to allow one to conclude (for non-unital $A$) that the induced sequence algebra map $\Psi_\infty: \cQ(\cK\otimes A)_\infty\cap C'\to\cQ(c_0(A))_\infty$ is multiplicative; in fact we would not be surprised if this could be wrong in general.
\end{remark}

%%%

\section{Applications and concluding remarks} \label{S.Applications}

The Calkin algebra was considered a prime candidate for a \cstar-algebra not elementarily equivalent to a nuclear \cstar-algebra, but previous attempts at a proof failed and in the meantime other examples have been found (\cite [\S 7]{Muenster}). (Two \cstar-algebras are \emph{elementarily equivalent} if and only if they have the same first-order theory.)
The following observation was communicated to us by James Gabe, Chris Schafhauser, and Stuart White during the Memorial Conference in honour of Eberhard Kirchberg in M\"unster in July 2023.  It precipitated the work presented here and is  included with their kind permission. 

\begin{theorem}\label{T.Calkin} The Calkin algebra $\cQ(H)$ is not elementarily equivalent to a nuclear \cstar-algebra. 
\end{theorem}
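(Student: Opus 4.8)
The plan is to leverage Theorem~\ref{T.Q-saturated} to produce a first-order-detectable obstruction that the Calkin algebra $\cQ(H)=\cQ(\cK)$ fails but every nuclear \cstar-algebra satisfies. The key point is that nuclearity, while not itself axiomatizable, is preserved under elementary equivalence in a suitably weak sense: what we actually need is that nuclear \cstar-algebras are all $\cO_2$-stable in a way that reflects to elementary submodels, or more precisely that some axiomatizable consequence of nuclearity is violated by $\cQ(H)$. Recall from the discussion after \cite[Theorem~B]{farah2023calkin} cited in the excerpt that $\cQ(H)$ is \emph{not} $\cO_\infty$-saturated (indeed not $\calD$-saturated for any strongly self-absorbing $\calD$).

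First I would recall the mechanism from Theorem~\ref{T.Q-saturated.1}: if $A$ is $\sigma$-unital and separably $\calD$-stable, then $\cQ(A)$ is $\calD$-saturated. Since $\cK$ is separably $\cO_\infty$-stable would require $\cK\cong\cK\otimes\cO_\infty$, which is false, this does not directly apply to $\cQ(H)$; rather, the content of \cite{farah2023calkin} is precisely that $\cQ(H)$ fails $\cO_\infty$-saturation. So the strategy is to compare $\cQ(H)$ with a genuinely nuclear (even $\cO_\infty$-stable or $\cO_2$-stable) corona. The natural comparison object is $\cQ(\cK\otimes\cO_2)$: by Theorem~\ref{T.Q-saturated.1} applied with $A=\cK\otimes\cO_2$ (which is $\sigma$-unital and $\cO_2$-stable, hence separably $\cO_2$-stable), the algebra $\cQ(\cK\otimes\cO_2)$ is $\cO_2$-saturated. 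The crux is then to exhibit $\cO_2$-saturation, or rather $\calD$-saturation for a well-chosen $\calD$, as an \emph{elementary} (first-order-expressible) property that separates these two coronas.

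The key steps, in order, are as follows. I would fix a strongly self-absorbing $\calD$ (taking $\calD=\cO_2$ is cleanest). Step one: show that $\calD$-saturation of a unital \cstar-algebra $C$ is captured by a first-order condition on $C$, using Corollary~\ref{cor:axiomatizable}\ref{1.cor:axiomatizable}; the property of admitting large approximately central maps from $\calD$ is axiomatizable, and for a unital algebra one relates this to the existence of a unital copy of $\calD$ in relative commutants of separable subalgebras via the saturation of the sequence algebra. Step two: invoke Theorem~\ref{T.Q-saturated.1} to conclude that any nuclear \cstar-algebra, being separably $\cO_2$-stable whenever it arises as such a corona—or more carefully, passing through the fact that every nuclear \cstar-algebra $N$ embeds appropriately so that its relevant first-order theory forces the axiomatizable consequence—satisfies the sentence $\varphi_m=0$ from Corollary~\ref{cor:axiomatizable}. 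Step three: invoke \cite[Theorem~B]{farah2023calkin} that $\cQ(H)$ is not $\cO_\infty$-saturated, hence (by the axiomatizability in step one) violates one of these sentences $\varphi_m$, so $\cQ(H)$ cannot be elementarily equivalent to any nuclear \cstar-algebra.

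The main obstacle is step two: nuclearity is notoriously \emph{not} a first-order axiomatizable property, so I cannot simply say ``every nuclear \cstar-algebra satisfies $\varphi_m=0$'' as an elementary consequence. The honest route is to identify a genuine first-order \emph{invariant} that is computed to be $0$ on all nuclear \cstar-algebras and strictly positive on $\cQ(H)$. Here the insight from Gabe--Schafhauser--White presumably is that nuclear \cstar-algebras, by the Kirchberg embedding/exact-sequence technology, locally admit the approximately central $\cO_\infty$-maps that witness $\calD$-saturation of their coronas, and crucially this witnessing is $\forall\exists$-expressible and hence preserved under elementary equivalence even though full nuclearity is not. The delicate part is therefore packaging ``nuclear $\Rightarrow$ the value of $\varphi_m$ is $0$'' correctly: one must extract from nuclearity only its first-order shadow (e.g.\ via completely positive approximations whose existence is quantifier-expressible on a suitable sort) and verify that $\cQ(H)$ genuinely falls outside this shadow, which is exactly the content of the failure of $\cO_\infty$-saturation established in \cite{farah2023calkin}. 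I would expect the write-up to hinge on carefully matching the quantifier complexity of the $\varphi_m$ to a known first-order consequence of nuclearity, then reading off the separation from the two cited results.
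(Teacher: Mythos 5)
There is a genuine gap, and it sits exactly at the step you flagged yourself. Your plan needs a single axiomatizable property --- the vanishing of the sentences $\varphi_m$ from Corollary~\ref{cor:axiomatizable}, i.e.\ admitting large approximately central maps from a fixed $\calD$ --- that holds in \emph{every} nuclear \cstar-algebra and fails in $\cQ(H)$. No property of this kind can work: by Corollary~\ref{cor:axiomatizable}\ref{2.cor:axiomatizable}, the condition ``$\varphi_m=0$ for all $m$'' is equivalent to separable $\calD$-stability, and for separable algebras separable $\calD$-stability is just $\calD$-stability; hence $\bbC$, $M_n(\bbC)$, $C([0,1])$ and $\cK$ are all nuclear \cstar-algebras violating it for every strongly self-absorbing $\calD$. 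Your proposed repair --- that nuclear algebras ``locally admit the approximately central $\cO_\infty$-maps that witness $\calD$-saturation of their coronas'' --- is false for the same reason, and ``extracting the first-order shadow of nuclearity'' is not an argument. Moreover, the theorem does not require a uniform obstruction: failure of elementary equivalence to each nuclear algebra may be witnessed by different sentences, so searching for one property separating $\cQ(H)$ from the whole nuclear class is both unnecessary and, along these lines, hopeless.

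The missing idea is to run the transfer in the opposite direction, using nuclearity only as a \emph{hypothesis of Kirchberg's $\cO_\infty$-absorption theorem} applied to the hypothetical algebra, never as something to axiomatize. Suppose $A$ is nuclear and elementarily equivalent to $\cQ(H)$; passing to a separable elementary submodel (which preserves both nuclearity and the first-order theory) we may assume $A$ is separable. Pure infiniteness and simplicity are axiomatizable and hold in $\cQ(H)$, so they transfer \emph{to} $A$; Kirchberg's theorem then yields $A\cong A\otimes\cO_\infty$. Now separable $\cO_\infty$-stability \emph{is} axiomatizable by Corollary~\ref{cor:axiomatizable}\ref{1.cor:axiomatizable}, so it transfers \emph{back} to $\cQ(H)$. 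Finally, the implication \ref{T.main.Q-locally-absorbing}$\Rightarrow$\ref{T.main.A-tensorially-absorbing} of Theorem~\ref{T.main}, applied with $\bbC$ in the role of the algebra there (so that $\cQ(\cK\otimes\bbC)=\cQ(H)$), would force $\bbC$ to be separably $\cO_\infty$-stable, which is absurd. Your step three --- contradicting the failure of $\cO_\infty$-saturation from \cite[Theorem~B]{farah2023calkin} --- can indeed substitute for this last step (the paper remarks that this needs a few extra arguments), but only once one already knows that $\cQ(H)$ is separably $\cO_\infty$-stable; your argument never reaches that point, because it never produces the $\cO_\infty$-stable algebra elementarily equivalent to $\cQ(H)$.
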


\begin{proof}
Suppose otherwise and fix a separable, nuclear \cstar-algebra $A$ elementarily equivalent to $\cQ(H)$.
The Calkin algebra is purely infinite and simple. This is an axiomatizable property (\cite[Theorem~2.1]{Muenster}), and therefore~$A$ is purely infinite and simple.
By a result of Kirchberg, $A$ tensorially absorbs~$\cO_\infty$ (\cite[Theorem~7.2.6]{Ror:Classification}).
By Corollary~\ref{cor:axiomatizable}, $\cQ(H)$ is separably $\cO_\infty$-stable, which contradicts Theorem~\ref{T.main}.
(With a few extra arguments it would have been also possible to reach a contradiction to \cite[Theorem~B]{farah2023calkin}.) 
\end{proof}

In case when $\calD$ is the Jiang--Su algebra $\cZ$, Theorem~\ref{T.main} shows that if $A$ is $\sigma$-unital, then $A$ is separably $\cZ$-stable if and only if $\cM(A)$ is separably $\cZ$-stable.
As Hannes Thiel pointed out to us, this is closely related to \cite[Theorem~5.3]{kaftal2017strict} where it was proven that if $A$ is $\sigma$-unital and has strict comparison,   then $\cM(A)$ has strict comparison, under the additional assumption that $A$ has finitely many extremal tracial states. 
Since every $\cZ$-absorbing \cstar-algebra has strict comparison (\cite[Corollary~4.6]{rordam2004stable}), we obtain: 

\begin{corollary} \label{C.strict}
If $A$ is $\sigma$-unital and separably $\cZ$-stable, then $\cM(A)$ has strict comparison.
\end{corollary}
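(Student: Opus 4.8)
The plan is to first reduce the statement about the typically nonseparable algebra $\cM(A)$ to its separable $\cZ$-stable subalgebras. Since $A$ is $\sigma$-unital and separably $\cZ$-stable, Theorem~\ref{T.main} (applied with $\calD=\cZ$) immediately upgrades the hypothesis to: $\cM(A)$ is separably $\cZ$-stable, i.e., every separable \cstar-subalgebra of $\cM(A)$ is contained in a separable $\cZ$-stable, and hence strictly comparing, subalgebra by \cite[Corollary~4.6]{rordam2004stable}. What remains is to transfer strict comparison from these separable subalgebras up to $\cM(A)$.

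The subtlety — and the reason one cannot simply ``reflect'' strict comparison directly — is that the trace-theoretic formulation of strict comparison behaves badly under passage to subalgebras: a (quasi)trace on a separable $B\subseteq\cM(A)$ need not extend to $\cM(A)$, so the comparison hypothesis over $\cM(A)$ does not transfer to $B$. To circumvent this I would route the argument through the order-theoretic reformulation. Recall from \cite{rordam2004stable} that strict comparison is implied by almost unperforation of the Cuntz semigroup, and that separable $\cZ$-stable \cstar-algebras have almost unperforated Cuntz semigroups. Almost unperforation is a purely order-theoretic, \emph{trace-free} condition on $\operatorname{Cu}$, and as such it is countably reflecting and compatible with separable $\cZ$-stability.

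Concretely, to show that $\operatorname{Cu}(\cM(A))$ is almost unperforated I would fix $x=\langle a\rangle$ and $y=\langle b\rangle$ in $\operatorname{Cu}(\cM(A))$ with $(k+1)x\le ky$ for some $k\in\bbN$. This inequality is witnessed by a Cuntz subequivalence $(k+1)\cdot a\precsim k\cdot b$ inside $\cM(A)\otimes\cK$, realized by a sequence of elements $(r_j)_j$ with $r_j(k\cdot b)r_j^*\to (k+1)\cdot a$; the representatives of $a,b$ together with the countably many witnesses $r_j$ all live in a single separable \cstar-subalgebra. Using separable $\cZ$-stability of $\cM(A)$, I would enlarge this to a separable $\cZ$-stable subalgebra $B\subseteq\cM(A)$ still containing $a$, $b$, and all the $r_j$, so that $(k+1)\langle a\rangle\le k\langle b\rangle$ already holds in $\operatorname{Cu}(B)$. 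Since $\operatorname{Cu}(B)$ is almost unperforated, $\langle a\rangle\le\langle b\rangle$ in $\operatorname{Cu}(B)$, and this is witnessed by elements of $B$; functoriality of $\operatorname{Cu}$ under the inclusion $B\hookrightarrow\cM(A)$ then yields $x\le y$ in $\operatorname{Cu}(\cM(A))$. Hence $\operatorname{Cu}(\cM(A))$ is almost unperforated, and strict comparison for $\cM(A)$ follows from Rørdam's implication.

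The main obstacle is precisely the bookkeeping of this reflection step: one must ensure that both the approximate witnesses of the hypothesis $(k+1)\cdot a\precsim k\cdot b$ and the witnesses produced by almost unperforation inside $B$ are genuine elements of the same separable $\cZ$-stable subalgebra, so that the resulting comparison is not lost when transferred back up to $\cM(A)$. This is exactly where the passage to the trace-free almost-unperforation formulation is essential, since it converts strict comparison into a countably reflecting order-theoretic statement that meshes cleanly with the separable $\cZ$-stability of $\cM(A)$ supplied by Theorem~\ref{T.main}.
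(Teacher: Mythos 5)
Your proposal is correct and takes essentially the same route as the paper: Theorem~\ref{T.main} (equivalently Lemma~\ref{L.A-in-M(A)}) upgrades the hypothesis to separable $\cZ$-stability of $\cM(A)$, and strict comparison is then deduced from R{\o}rdam's theorem that $\cZ$-stable \cstar-algebras have strict comparison \cite[Corollary~4.6]{rordam2004stable}. The paper states the corollary without spelling out the local-to-global step, and your reflection argument via almost unperforation of the Cuntz semigroup is exactly the standard way to make that implicit step precise.
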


Modulo verification of the Toms--Winter conjecture (for up-to-date information see e.g., \cite[\S 1.6]{carrion2023tracially}), Theorem~\ref{T.main} would hence imply that if $A$ is separable simple nuclear, non-elementary and has strict comparison, then $\cM(A)$ has strict comparison.

The original motivation for the earlier article \cite{farah2023calkin} came from the rigidity question for coronas of separable, non-unital \cstar-algebras, asking when $\cQ(A)\cong \cQ(B)$ implies $A\cong B$ attributed to Sakai in \cite{Ell:Derivations}  (see \cite[\S 4.4]{farah2022corona} for more information). 
Our Theorem~\ref{T.main} gives a partial positive answer to  this question. 

\begin{corollary} If $A$ and $B$ are separable \cstar-algebras such that there exists a strongly self-absorbing \cstar-algebra $\calD$ that is tensorially absorbed by $A$ but not by $B$, then 
$\cQ(\cK\otimes A)$ and $\cQ(\cK\otimes B)$ are not isomorphic. 

In particular, coronas of stabilizations of strongly self-absorbing \cstar-al\-geb\-ras are pairwise nonisomorphic. 
\end{corollary}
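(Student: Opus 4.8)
The plan is to deduce both assertions directly from Theorem~\ref{T.main}, using two elementary facts: that separable $\calD$-stability is an isomorphism invariant (indeed it is axiomatizable by Corollary~\ref{cor:axiomatizable}\ref{1.cor:axiomatizable}), and that for separable \cstar-algebras it coincides with genuine tensorial $\calD$-absorption, as recorded after Proposition~\ref{Prop.ssa-clubs}. Since $A$ and $B$ are separable they are in particular $\sigma$-unital, so the equivalences of Theorem~\ref{T.main} apply to both.

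For the first statement I would argue as follows. Since $A\cong A\otimes\calD$ with $A$ separable, the algebra $A$ is separably $\calD$-stable; the implication \ref{T.main.A-tensorially-absorbing}$\Rightarrow$\ref{T.main.Q-locally-absorbing} of Theorem~\ref{T.main} then shows that $\cQ(\cK\otimes A)$ is separably $\calD$-stable. On the other hand $B$ is separable with $B\not\cong B\otimes\calD$, hence $B$ is \emph{not} separably $\calD$-stable, and the contrapositive of \ref{T.main.Q-locally-absorbing}$\Rightarrow$\ref{T.main.A-tensorially-absorbing} yields that $\cQ(\cK\otimes B)$ is not separably $\calD$-stable. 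As separable $\calD$-stability is preserved under $*$-isomorphism, the two coronas cannot be isomorphic.

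For the ``in particular'' part, let $\calD_1\not\cong\calD_2$ be strongly self-absorbing. The key point is purely order-theoretic: if each of $\calD_1,\calD_2$ absorbed the other, then $\calD_1\cong\calD_1\otimes\calD_2\cong\calD_2\otimes\calD_1\cong\calD_2$, a contradiction. Hence, after possibly interchanging the two, I may assume that $\calD_2$ does not absorb $\calD_1$, i.e.\ $\calD_2\otimes\calD_1\not\cong\calD_2$. I then apply the first statement with $\calD:=\calD_1$, $A:=\calD_1$ and $B:=\calD_2$: since $\calD_1$ is strongly self-absorbing we have $\calD_1\otimes\calD_1\cong\calD_1$, so $A$ does absorb $\calD_1$, while $B=\calD_2$ does not. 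Therefore $\cQ(\cK\otimes\calD_1)\not\cong\cQ(\cK\otimes\calD_2)$, as required.

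The proof is short precisely because all of the analytic difficulty is already encapsulated in Theorem~\ref{T.main}; the only genuinely new ingredients are the isomorphism-invariance of separable $\calD$-stability and the observation that two strongly self-absorbing \cstar-algebras which mutually absorb each other must be isomorphic. I do not anticipate a serious obstacle, but the delicate point in the ``in particular'' part is the symmetry-breaking: one must use one of the two given algebras \emph{itself} as the separating strongly self-absorbing algebra, which is what allows the argument to proceed without any finer classification of the partial order of strongly self-absorbing \cstar-algebras.
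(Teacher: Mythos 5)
Your proof is correct and follows exactly the route the paper intends: the corollary is stated there without proof as an immediate consequence of Theorem~\ref{T.main}, using that separable $\calD$-stability is an isomorphism invariant coinciding with $\calD$-absorption for separable algebras. Your explicit symmetry-breaking argument for the ``in particular'' part (two strongly self-absorbing algebras that mutually absorb each other are isomorphic) is the small observation the paper leaves implicit, and you handle it correctly.
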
  

\begin{corollary}\label{C.JiangSu}
Assume $A$ is separable, non-unital and $\cQ(A)$ is isomorphic to the Calkin algebra.
Then $A$ does not absorb the Jiang--Su algebra tensorially.  
\end{corollary}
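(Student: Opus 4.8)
The plan is to argue by contradiction, so suppose that $A$ does absorb the Jiang--Su algebra, i.e., $A\cong A\otimes\cZ$. First I would note that, being separable, $A$ is $\sigma$-unital, and that for separable \cstar-algebras tensorial $\cZ$-absorption coincides with separable $\cZ$-stability; hence $A$ is separably $\cZ$-stable. This places $A$ squarely within the hypotheses of Theorem~\ref{T.Q-saturated.1}, which I would invoke to conclude that $\cQ(A)$ is $\cZ$-saturated. Since $\cQ(A)$ is unital, $\cZ$-saturation immediately forces separable $\cZ$-stability: a unital embedding of $\cZ$ into the relative commutant $\cQ(A)\cap X'$ of an arbitrary separable subalgebra $X$ composes with the canonical maps into $(\cQ(A)_\infty\cap X')/(\cQ(A)_\infty\cap X^\perp)$ to verify condition~\ref{P.approx-central-embeddings.2} of Proposition~\ref{P.approx-central-embeddings}, and Corollary~\ref{cor:axiomatizable}\ref{2.cor:axiomatizable} then gives separable $\cZ$-stability. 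Transporting this conclusion across the hypothesized isomorphism $\cQ(A)\cong\cQ(H)$, the Calkin algebra would be separably $\cZ$-stable.

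The step that delivers the contradiction is to recognize the Calkin algebra as the stable corona of the trivial algebra, namely $\cQ(H)=\cQ(\cK)=\cQ(\cK\otimes\bbC)$. I would then apply the equivalence \ref{T.main.A-tensorially-absorbing}$\Leftrightarrow$\ref{T.main.Q-locally-absorbing} of Theorem~\ref{T.main} to the unital (hence $\sigma$-unital) algebra $\bbC$: it asserts that $\cQ(\cK\otimes\bbC)$ is separably $\cZ$-stable if and only if $\bbC$ is separably $\cZ$-stable. But $\bbC$ is plainly not $\cZ$-stable, since $\cZ\not\cong\bbC$ entails $\bbC\otimes\cZ=\cZ\neq\bbC$. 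Hence $\cQ(H)$ is not separably $\cZ$-stable, contradicting the previous paragraph and thereby establishing the corollary.

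I do not anticipate any serious technical obstacle, as the statement is essentially a repackaging of Theorems~\ref{T.Q-saturated.1} and~\ref{T.main}. The only point that requires a moment of thought is the one just highlighted: realizing $\cQ(H)$ as $\cQ(\cK\otimes\bbC)$ so that the failure of $\cZ$-stability for the scalars can be promoted, via Theorem~\ref{T.main}, to a failure of separable $\cZ$-stability for the Calkin algebra. This is exactly what lets the argument bypass any direct appeal to the more delicate non-saturation results of \cite{farah2023calkin}.
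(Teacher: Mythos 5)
Your proof is correct, but it diverges from the paper's own argument in the decisive step. Both proofs begin identically: $\cZ$-absorption of the separable algebra $A$ gives, via Theorem~\ref{T.Q-saturated.1}, that $\cQ(A)$ is $\cZ$-saturated. The paper then finishes in one line by quoting the external result \cite[Theorem~B]{farah2023calkin}, which says the Calkin algebra is \emph{not} $\cZ$-saturated, and the contradiction is immediate at the level of saturation. You instead weaken saturation to separable $\cZ$-stability (correctly, via Proposition~\ref{P.approx-central-embeddings}\ref{P.approx-central-embeddings.2} and Corollary~\ref{cor:axiomatizable}\ref{2.cor:axiomatizable}; the paper itself asserts this implication inside the proof of Lemma~\ref{L.A-in-M(A)}), transport it across the isomorphism, and then derive the contradiction internally: writing $\cQ(H)=\cQ(\cK\otimes\bbC)$ and applying the implication \ref{T.main.Q-locally-absorbing}$\Rightarrow$\ref{T.main.A-tensorially-absorbing} of Theorem~\ref{T.main} to the unital algebra $\bbC$ shows the Calkin algebra cannot be separably $\cZ$-stable, since $\bbC\ncong\cZ\cong\bbC\otimes\cZ$. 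What your route buys is self-containment within the present paper (no appeal to the non-saturation theorem of \cite{farah2023calkin}) and, along the way, the formally stronger conclusion that $\cQ(H)$ fails even separable $\cZ$-stability, not merely $\cZ$-saturation; the cost is that you invoke the heavy machinery behind Theorem~\ref{T.main} (the property (T) transfer results of Sections~\ref{S.Schur} and~\ref{S.Gamma.A}), whereas the paper's proof is two lines given the earlier result. It is worth noting that your argument is exactly the pattern the paper uses for Theorem~\ref{T.Calkin} (with $\cO_\infty$ in place of $\cZ$), where the same contradiction against Theorem~\ref{T.main} is drawn and the appeal to \cite[Theorem~B]{farah2023calkin} is explicitly mentioned as an alternative.
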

\begin{proof}
If $A$ is $\cZ$-absorbing, then its corona is $\cZ$-saturated by Theorem~\ref{T.Q-saturated}.
This is not the case with the Calkin algebra by \cite[Theorem~B]{farah2023calkin}. 
\end{proof}

We now turn to an application of Theorem~\ref{T.Q-saturated} related to classification theory.
In the recent breakthrough paper \cite{carrion2023class}, a new conceptual proof is given of the state-of-the-art classification theorem for separable unital simple nuclear $\cZ$-stable \cstar-algebras satisfying the UCT.
The approach towards such a big result involved various independent steps, both conceptual and technical.
In order to get to the uniqueness theorem in that article, the authors re-examined the stable uniqueness theorem for $\textit{KK}$-theory and proved a Jiang--Su stable counterpart in the form of \cite[Theorems 1.4, 5.15]{carrion2023class}.
We note that it is outside the scope of this article to recall all the needed terminology and the framework to appreciate these statements related to $\textit{KK}$- and $\textit{KL}$-theory, but we refer the reader to said article for a very thorough treatment and its applications.
We can improve the conclusion of the $\cZ$-stable $\textit{KK}$-uniqueness theorem from \cite{carrion2023class} as follows:

\begin{theorem} \label{T.Z-stable-KK}
Let $A$ be a separable \cstar-algebra.
Let $I$ be a stable, $\sigma$-unital and separably $\cZ$-stable \cstar-algebra.
Let $\phi,\psi: A\to\cM(I)$ be a pair of absorbing $*$-homomorphisms that form a Cuntz pair.
	\begin{enumerate}[label=\textup{(\arabic*)}]
	\item If $[\phi,\psi]_{\textit{KK}(A,I)}=0$, then there exists a norm-continuous path $(u_t)_{t\geq 0}$ of unitaries in $I^\dagger$ such that\footnote{Here $I^\dagger$ denotes the smallest unitization of $I$.}
	\[
	\psi(a)=\lim_{t\to\infty} u_t\phi(a)u_t^*,\quad a\in A.
	\]
	\item If $[\phi,\psi]_{\textit{KL}(A,I)}=0$, then there exists a sequence $(u_n)_{n\in\bbN}$ of unitaries in $I^\dagger$ such that
	\[
	\psi(a)=\lim_{n\to\infty} u_n\phi(a)u_n^*,\quad a\in A.
	\]
	\end{enumerate}
\end{theorem}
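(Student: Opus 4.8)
The plan is to deduce both statements from their separable counterparts \cite[Theorems~1.4 and~5.15]{carrion2023class} by a reflection argument, using the toolkit of this paper to replace the $\sigma$-unital, stable, separably $\cZ$-stable algebra $I$ by a separable stable $\cZ$-stable subalgebra over which $\phi$ and $\psi$ already live. The point is that if $I_1\subseteq I$ is a nondegenerate subalgebra, then the induced inclusion $\cM(I_1)\subseteq\cM(I)$ is unital, so $I_1^\dagger\subseteq I^\dagger$ and a unitary in $I_1^\dagger$ is automatically a unitary in $I^\dagger$; moreover norm-convergence inside $\cM(I_1)$ is norm-convergence inside $\cM(I)$. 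Hence any path (resp.\ sequence) of unitaries implementing the asymptotic (resp.\ approximate) unitary equivalence at the level of $I_1$ yields verbatim the desired conclusion at the level of $I$. Thus the whole content lies in producing a suitable separable $I_1$, and parts~(1) and~(2) reduce in parallel, the only difference being whether one invokes the $\textit{KK}$- or the $\textit{KL}$-version of the cited theorem.

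To construct $I_1$, I would first capture the data of $\phi,\psi$ in a separable piece. Since $A$ is separable, $\phi(A)\cup\psi(A)$ is a strictly separable subset of $\cM(I)$ and $\{\phi(a)-\psi(a)\mid a\in A\}$ is a separable subset of $I$, so Lemma~\ref{Lem.separable-exhaustion-multipliers} furnishes a nondegenerate separable $I_0\subseteq I$ with $\phi(A),\psi(A)\subseteq\cM(I_0)$ that also contains the Cuntz differences, whence $\phi,\psi\colon A\to\cM(I_0)$ again form a Cuntz pair. I would then enlarge $I_0$ inside the intersection of several clubs in $\Sep(I)$: the (cofinal, $\sigma$-closed) family of nondegenerate subalgebras used in Lemma~\ref{Lem.separable-exhaustion-multipliers}, the club of separably $\cZ$-stable subalgebras from Proposition~\ref{Prop.ssa-clubs} (using the hypothesis on $I$), and a club of stable subalgebras, which exists because $I$, being stable, is of the form $J\otimes\cK$ with $\theta\colon J\otimes\cK\xrightarrow{\cong} I$, so that $\{\theta(C\otimes\cK)\mid C\in\Sep(J)\}$ is a club of stable separable subalgebras. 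By Lemma~\ref{Lem.clubs} the intersection is again a club, and since multipliers are monotone under nondegenerate inclusions, any member $I_1\supseteq I_0$ of this club is separable, stable, $\cZ$-stable and nondegenerate, while still $\phi(A),\psi(A)\subseteq\cM(I_0)\subseteq\cM(I_1)$ with the Cuntz differences lying in $I_1$.

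It remains to verify, after possibly passing to a larger member of the club, that $\phi,\psi\colon A\to\cM(I_1)$ are absorbing and that $[\phi,\psi]_{\textit{KK}(A,I_1)}=0$ (resp.\ $\textit{KL}$). For the vanishing of the class I would invoke continuity of $\textit{KK}(A,-)$ in the second variable: the nullity of $[\phi,\psi]$ in $\textit{KK}(A,I)=\varinjlim \textit{KK}(A,I_\lambda)$ is witnessed by a separable amount of data, hence already holds in $\textit{KK}(A,I_1)$ for a cofinal, upward-closed set of stages, which therefore contains a club to be intersected into the above. For the absorbing property I would \emph{not} transport absorption from $\cM(I)$ directly, but re-derive it inside $\cM(I_1)$: by the Elliott--Kucerovsky/Gabe characterization, for stable $\sigma$-unital $I_1$ a nondegenerate $*$-homomorphism into $\cM(I_1)$ is absorbing once a fullness/purely-large condition holds, and such conditions reflect to a club. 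Here the main results of this paper enter conceptually, since Theorem~\ref{T.main} together with Lemma~\ref{L.A-in-M(A)} ensures $\cM(I_1)$ is separably $\cZ$-stable and Theorem~\ref{T.Q-saturated.1} ensures $\cQ(I_1)$ is $\cZ$-saturated, providing precisely the ``room'' that the cited separable theorem needs in order to straighten approximate unitary equivalences into genuine continuous paths.

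The hard part will be this last verification: guaranteeing that absorption is preserved or recoverable under the reflection, so that the hypotheses of \cite[Theorems~1.4,~5.15]{carrion2023class} are genuinely met by the restricted maps $\phi,\psi\colon A\to\cM(I_1)$ rather than only by the ambient maps into $\cM(I)$. A secondary but real difficulty is the continuity needed for part~(2): unlike $\textit{KK}$, passage to the inductive limit for $\textit{KL}(A,-)$ involves a $\varprojlim^{1}$/Pext correction, so I would either argue vanishing of the $\textit{KL}$-class at a separable stage directly from the Milnor sequence, or else obtain the approximate (sequence) conclusion of part~(2) by diagonalizing over an exhausting sequence of separable stages, thereby sidestepping a clean continuity statement for $\textit{KL}$.
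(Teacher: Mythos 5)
Your reduction strategy has a fatal flaw at its very first step: the theorems you want to reduce to, \cite[Theorems~1.4 and~5.15]{carrion2023class}, are \emph{not} the separable counterparts of the statement being proved. Their conclusion is asymptotic (resp.\ approximate) unitary equivalence implemented by unitaries in the multiplier algebra $\cM(I_1)$, not in the minimal unitization $I_1^\dagger$. Whether the unitaries can be taken in the minimal unitization is precisely the content of \cite[Question~5.17]{carrion2023class}, and the theorem at hand is an \emph{improvement} of the cited uniqueness theorems exactly because it achieves this; the introduction of the paper describes it as a partial positive answer to that question. So even if all of your reflection machinery went through --- a nondegenerate separable $I_1$ capturing $\phi,\psi$, stability, $\cZ$-stability, absorption, and vanishing of the $\textit{KK}$/$\textit{KL}$-class at a separable stage --- you would only conclude that $\psi(a)=\lim u_t\phi(a)u_t^*$ for unitaries $u_t\in\cM(I_1)\subseteq\cM(I)$, which is strictly weaker than the statement. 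The gap cannot be repaired by a better reflection argument, because the separable statement you need is not in the literature; it is (a special case of) what is to be proved.

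What the upgrade to $I^\dagger$ actually requires is $K_1$-injectivity of the relative commutant $\cQ(I)\cap\pi(\phi(A))'$, where $\pi\colon\cM(I)\to\cQ(I)$ is the quotient map: by the remarks following \cite[Question~5.17]{carrion2023class}, this $K_1$-injectivity is what permits correcting the multiplier unitaries into $I^\dagger$. This is where the main results of the paper enter, and in a different way from how you use them: Theorem~\ref{T.Q-saturated.1} shows that $\cQ(I)$ is $\cZ$-saturated (this works directly for $\sigma$-unital, separably $\cZ$-stable $I$, with no reduction to separable $I$), hence $\cQ(I)\cap\pi(\phi(A))'$ is $\cZ$-saturated as well, and then \cite[Theorem~4.8]{carrion2023class} (going back to \cite{Jiang99}) yields its $K_1$-injectivity, from which both parts of the conclusion follow. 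As secondary points: your appeal to ``continuity of $\textit{KK}(A,-)$'' over a club of separable subalgebras is itself problematic ($\textit{KK}$ in the second variable is not continuous in general, and the $\textit{KL}$ case has the $\varprojlim^1$ issue you acknowledge), and reflecting absorption to separable stages is nontrivial; but these difficulties are moot, since the strategy proves the wrong statement even where it succeeds.
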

\begin{proof}
Let $\pi: \cM(I)\to\cQ(I)$ be the quotient map.
By Theorem~\ref{T.Q-saturated}, we have that $\cQ(I)$ is $\cZ$-saturated.
Therefore $\cQ(I)\cap\pi(\phi(A))'$ is also $\cZ$-saturated.
By \cite[Theorem 4.8]{carrion2023class} (which goes back to \cite{Jiang99}), it follows that $\cQ(I)\cap\pi(\phi(A))'$ is $K_1$-injective.
By the explicit observations and remarks made right after \cite[Question 5.17]{carrion2023class}, the desired conclusion follows.
\end{proof}

By using this strengthened version of uniqueness theorem, it is possible to simplify or circumvent various technical steps in \cite[Section~7]{carrion2023class}.

\begin{remark}
	The main results of this paper have been subsequently generalized to the dynamical setting in \cite{li2025corona}.
	This arises by letting a locally compact group $G$ act on a \cstar-algebra $A$ and canonically inducing a $G$-action on $\cQ(A)$.
	The core problems studied in this article have a dynamical analog if strong self-absorption of \cstar-algebras is replaced with its dynamical counterpart from \cite{szabo2018strongly}. 
\end{remark}

\bibliographystyle{plain}
\bibliography{references}

\begin{thebibliography}{10}

\bibitem{Arv:Notes}
W.~Arveson.
\newblock Notes on extensions of \cstar-algebras.
\newblock {\em Duke Math. J.}, 44:329--355, 1977.

\bibitem{bekka2008kazhdan}
B.~Bekka, P.~de~La~Harpe, and A.~Valette.
\newblock {\em Kazhdan's property {(T)}}.
\newblock Cambridge University Press, 2008.

\bibitem{BYBHU}
I.~Ben~Yaacov, A.~Berenstein, C.W. Henson, and A.~Usvyatsov.
\newblock Model theory for metric structures.
\newblock In Z.~Chatzidakis et~al., editors, {\em Model Theory with
  Applications to Algebra and Analysis, Vol. II}, number 350 in London Math.
  Soc. Lecture Notes Series, pages 315--427. London Math. Soc., 2008.

\bibitem{Black:Operator}
B.~Blackadar.
\newblock {\em Operator algebras}, volume 122 of {\em Encyclopaedia Math. Sci.}
\newblock Springer-Verlag, Berlin, 2006.
\newblock Theory of \cstar-algebras and von Neumann algebras, Operator Algebras
  and Non-commutative Geometry, III.

\bibitem{BBSTWW}
J.~Bosa, N.~P. Brown, Y.~Sato, A.~Tikuisis, S.~White, and W.~Winter.
\newblock Covering dimension of \cstar-algebras and 2-coloured classification.
\newblock {\em Mem. Amer. Math. Soc.}, 257(1233):vii+97, 2019.

\bibitem{BrOz:C*}
N.~Brown and N.~Ozawa.
\newblock {\em {\cstar}-algebras and finite-dimensional approximations},
  volume~88 of {\em Grad. Stud. Math.}
\newblock American Mathematical Society, Providence, RI, 2008.

\bibitem{carrion2023tracially}
J.~R. Carri{\'o}n, J.~Castillejos, S.~Evington, J.~Gabe, C.~Schafhauser,
  A.~Tikuisis, and S.~White.
\newblock Tracially complete \cstar-algebras.
\newblock {\em Preprint arXiv:2310.20594}, 2023.

\bibitem{carrion2023class}
J.~R. Carri{\'o}n, J.~Gabe, C.~Schafhauser, A.~Tikuisis, and S.~White.
\newblock Classifying {$*$}-homomorphisms {I}: unital simple nuclear
  \cstar-algebras.
\newblock {\em Preprint arXiv:2307.06480v3}, 2023.

\bibitem{CastillejosEvington21}
J.~Castillejos and S.~Evington.
\newblock Stabilising uniform property {G}amma.
\newblock {\em Proc. Amer. Math. Soc.}, 149:4725--4737, 2021.

\bibitem{CETWW21}
J.~Castillejos, S.~Evington, A.~Tikuisis, S.~White, and W.~Winter.
\newblock Nuclear dimension of simple \cstar-algebras.
\newblock {\em Invent. Math.}, 224:245--290, 2021.

\bibitem{ChaKe}
C.~C. Chang and H.~J. Keisler.
\newblock {\em Model theory}, volume~73 of {\em Studies in Logic and the
  Foundations of Mathematics}.
\newblock North-Holland Publishing Co., Amsterdam, third edition, 1990.

\bibitem{Cuntz77}
J.~Cuntz.
\newblock Simple \cstar-algebras generated by isometries.
\newblock {\em Comm. Math. Phys.}, 57(2):173--185, 1977.

\bibitem{effros1978c}
E.~G. Effros and J.~Rosenberg.
\newblock C*-algebras with approximately inner flip.
\newblock {\em Pacific J. Math}, 77(2):417--443, 1978.

\bibitem{Ell:Derivations}
G.~A. Elliott.
\newblock Derivations of matroid \cstar-algebras. {II}.
\newblock {\em Ann. Math.}, 100(2):407--422, 1974.

\bibitem{ElliottNiu17}
G.~A. Elliott and Z.~Niu.
\newblock The \cstar-algebra of a minimal homeomorphism of zero mean dimension.
\newblock {\em Duke Math. J.}, 166(18):3569--3594, 2017.

\bibitem{ElliottToms08}
G.~A. Elliott and A.~S. Toms.
\newblock Regularity properties in the classification program for separable
  amenable \cstar-algebras.
\newblock {\em Bull. Amer. Math. Soc.}, 45:229--245, 2008.

\bibitem{Fa:STCstar}
I.~Farah.
\newblock {\em Combinatorial Set Theory and \cstar-algebras}.
\newblock Springer Monogr. Math. Springer, 2019.

\bibitem{farah2023calkin}
I.~Farah.
\newblock The {C}alkin algebra, {K}azhdan's property ({T}), and strongly
  self-absorbing \cstar-algebras.
\newblock {\em Proc. London Math. Soc.}, 127(6):1749--1774, 2023.

\bibitem{farah2022corona}
I.~Farah, S.~Ghasemi, A.~Vaccaro, and A.~Vignati.
\newblock Corona {R}igidity.
\newblock {\em Preprint arXiv:2201.11618}, 2022.

\bibitem{Muenster}
I.~Farah, B.~Hart, M.~Lupini, L.~Robert, A.~Tikuisis, A.~Vignati, and
  W.~Winter.
\newblock Model theory of \cstar-algebras.
\newblock {\em Mem. Amer. Math. Soc.}, 271(1324):viii+127, 2021.

\bibitem{FaHaRoTi:Relative}
I.~Farah, B.~Hart, M.~R{\o}rdam, and A.~Tikuisis.
\newblock Relative commutants of strongly self-absorbing \cstar-algebras.
\newblock {\em Selecta Math.}, 23(1):363--387, 2017.

\bibitem{foreman2009handbook}
M.~Foreman and A.~Kanamori.
\newblock {\em Handbook of set theory}.
\newblock Springer Science \& Business Media, 2009.

\bibitem{GardellaGeffenNaryshkinVaccaro24}
E.~Gardella, S.~Geffen, P.~Naryshkin, and A.~Vaccaro.
\newblock Dynamical comparison and {$\mathcal{Z}$}-stability for crossed
  products of simple \cstar-algebras.
\newblock {\em Adv. Math.}, 438, 2024.
\newblock Article 109471.

\bibitem{Gha:SAW*}
S.~Ghasemi.
\newblock {SAW}* algebras are essentially non-factorizable.
\newblock {\em Glasg. Math. J.}, 57(1):1--5, 2015.

\bibitem{GongLinNiu20}
G.~Gong, H.~Lin, and Z.~Niu.
\newblock A classification of finite simple amenable {$\mathcal Z$}-stable
  \cstar-algebras, {I}: \cstar-algebras with generalized tracial rank one.
\newblock {\em C. R. Math. Rep. Acad. Sci. Canada}, 42(3):63--450, 2020.

\bibitem{GongLinNiu20-2}
G.~Gong, H.~Lin, and Z.~Niu.
\newblock A classification of finite simple amenable {$\mathcal Z$}-stable
  \cstar-algebras, {II}: \cstar-algebras with rational generalized tracial rank
  one.
\newblock {\em C. R. Math. Rep. Acad. Sci. Canada}, 42(4):451--539, 2020.

\bibitem{higson2000analytic}
N.~Higson and J.~Roe.
\newblock {\em Analytic {K}-homology}.
\newblock Oxford Math. Monogr. Oxford University Press, 2000.

\bibitem{hirshberg2017rokhlin}
I.~Hirshberg, G.~Szab{\'o}, W.~Winter, and J.~Wu.
\newblock Rokhlin dimension for flows.
\newblock {\em Comm. Math. Phys.}, 353:253--316, 2017.

\bibitem{Jiang99}
X.~Jiang.
\newblock Nonstable {K}-theory for {$\mathcal Z$}-stable \cstar-algebras.
\newblock {\em Preprint arXiv:math/9707228}, 1999.

\bibitem{JiangSu99}
X.~Jiang and H.~Su.
\newblock On a simple unital projectionless \cstar-algebra.
\newblock {\em Amer. J. Math.}, 121(2):359--413, 1999.

\bibitem{kaftal2017strict}
V.~Kaftal, P.W. Ng, and S.~Zhang.
\newblock Strict comparison of positive elements in multiplier algebras.
\newblock {\em Canad. J. Math.}, 69(2):373--407, 2017.

\bibitem{Kerr20}
D.~Kerr.
\newblock Dimension, comparison, and almost finiteness.
\newblock {\em J. Eur. Math. Soc.}, 22(11):3697--3745, 2020.

\bibitem{KerrSzabo20}
D.~Kerr and G.~Szab{\'o}.
\newblock Almost finiteness and the small boundary property.
\newblock {\em Comm. Math. Phys.}, 374:1--31, 2020.

\bibitem{KirchbergC}
E.~Kirchberg.
\newblock The {C}lassification of {P}urely {I}nfinite \cstar-{A}lgebras {U}sing
  {K}asparov's {T}heory.
\newblock Preprint.
  \url{https://www.uni-muenster.de/imperia/md/content/MathematicsMuenster/ekneu1.pdf}.

\bibitem{Kirchberg95icm}
E.~Kirchberg.
\newblock Exact \cstar-algebras, tensor products, and the classification of
  purely infinite algebras.
\newblock In {\em Proc. Intern. Congr. Math.}, pages 943--954, 1995.

\bibitem{Kirc:Central}
E.~Kirchberg.
\newblock Central sequences in \cstar-algebras and strongly purely infinite
  algebras.
\newblock In {\em Operator Algebras: The Abel Symposium 2004}, volume~1 of {\em
  Abel Symp.}, pages 175--231. Springer, Berlin, 2006.

\bibitem{KirchbergPhillips00}
E.~Kirchberg and N.~C. Phillips.
\newblock Embedding of exact \cstar-algebras in the {C}untz algebra {$\mathcal
  O_2$}.
\newblock {\em J. reine angew. Math.}, 525:17--53, 2000.

\bibitem{li2025corona}
X.~Li, M.~Pagliero, and G.~Szab{\'o}.
\newblock Corona algebras and strongly self-absorbing \cstar-dynamics.
\newblock {\em Preprint arXiv:2503.07515}, 2025.

\bibitem{magidor1994does}
M.~Magidor and S.~Shelah.
\newblock When does almost free imply free? ({F}or groups, transversals, etc.).
\newblock {\em JAMS}, 7(4):769--830, 1994.

\bibitem{manuilov2004theory}
V.~Manuilov and K.~Thomsen.
\newblock {E}-theory is a special case of {KK}-theory.
\newblock {\em Proc. London Math. Soc.}, 88:455--478, 2004.

\bibitem{MatuiSato12}
H.~Matui and Y.~Sato.
\newblock Strict comparison and {$\mathcal Z$}-absorption of nuclear
  \cstar-algebras.
\newblock {\em Acta Math.}, 209:179--196, 2012.

\bibitem{MatuiSato14uhf}
H.~Matui and Y.~Sato.
\newblock Decomposition rank of {UHF}-absorbing \cstar-algebras.
\newblock {\em Duke Math. J.}, 163(14):2687--2708, 2014.

\bibitem{McDuff70}
D.~McDuff.
\newblock Central sequences and the hyperfinite factor.
\newblock {\em Proc. Lond. Math. Soc.}, 20:443--461, 1970.

\bibitem{Naryshkin24}
P.~Naryshkin.
\newblock Group extensions preserve almost finiteness.
\newblock {\em J. Funct. Anal.}, 286(7), 2024.
\newblock Article 110348.

\bibitem{Phillips00}
N.~C. Phillips.
\newblock A classification theorem for nuclear purely infinite simple
  \cstar-algebras.
\newblock {\em Doc. Math.}, 5:49--114, 2000.

\bibitem{Ror:Classification}
M.~R{\o}rdam.
\newblock {\em Classification of nuclear {\cstar}-algebras}, volume 126 of {\em
  Encyclopaedia Math. Sci.}
\newblock Springer-Verlag, Berlin, 2002.

\bibitem{rordam2004stable}
M.~R{\o}rdam.
\newblock The stable and the real rank of {$\cZ$}-absorbing \cstar-algebras.
\newblock {\em Internat. J. Math.}, 15(10):1065--1084, 2004.

\bibitem{sakai1971derivations}
S.~Sakai.
\newblock Derivations of simple \cstar-algebras, {III}.
\newblock {\em Tohoku Math. Journal}, 23(3):559--564, 1971.

\bibitem{schafhauser2020new}
C.~Schafhauser.
\newblock A new proof of the {T}ikuisis--{W}hite--{W}inter theorem.
\newblock {\em J. reine angew. Math}, 759:291--304, 2020.

\bibitem{Schafhauser2020}
C.~Schafhauser.
\newblock Subalgebras of simple {AF}-algebras.
\newblock {\em Ann. Math.}, 192(2):309--352, 2020.

\bibitem{Schafhauser24}
C.~Schafhauser.
\newblock {KK}-rigidity of simple nuclear \cstar-algebras.
\newblock {\em Preprint arXiv:2408.02745}, 2024.

\bibitem{szabo2018strongly}
G.~Szab{\'o}.
\newblock Strongly self-absorbing \cstar-dynamical systems.
\newblock {\em Trans. Amer. Math. Soc.}, 370(1):99--130, 2018.

\bibitem{Tikuisis14}
A.~Tikuisis.
\newblock Nuclear dimension, {$\mathcal Z$}-stability, and algebraic simplicity
  for stably projectionless \cstar-algebras.
\newblock {\em Math. Ann.}, 358:729--778, 2014.

\bibitem{tikuisis2017quasidiagonality}
A.~Tikuisis, S.~White, and W.~Winter.
\newblock Quasidiagonality of nuclear \cstar-algebras.
\newblock {\em Ann. Math.}, 185(2):229--284, 2017.

\bibitem{TomsWinter13}
A.~S. Toms and W.~Winter.
\newblock Minimal dynamics and {$K$}-theoretic rigidity:\ {E}lliott's
  conjecture.
\newblock {\em GAFA}, 23:467--481, 2013.

\bibitem{ToWi:Strongly}
A.S. Toms and W.~Winter.
\newblock Strongly self-absorbing {$C^*$}-algebras.
\newblock {\em Trans. Amer. Math. Soc.}, 359(8):3999--4029, 2007.

\bibitem{wassermann1991c}
S.~Wassermann.
\newblock \cstar-algebras associated with groups with {K}azhdan's property {T}.
\newblock {\em Ann. Math.}, 134(2):423--431, 1991.

\bibitem{Winter10}
W.~Winter.
\newblock Decomposition rank and {$\mathcal{Z}$}-stability.
\newblock {\em Invent. Math.}, 179(2):229--301, 2010.

\bibitem{winter2011strongly}
W.~Winter.
\newblock Strongly self-absorbing \cstar-algebras are {$\cZ$}-stable.
\newblock {\em J. Noncommut. Geom.}, 5(2):253--264, 2011.

\bibitem{Winter12}
W.~Winter.
\newblock Nuclear dimension and {$\mathcal{Z}$}-stability of pure
  \cstar-algebras.
\newblock {\em Invent. Math.}, 187(2):259--342, 2012.

\bibitem{Winter14}
W.~Winter.
\newblock Localizing the {E}lliott conjecture at strongly self-absorbing
  \cstar-algebras, with an appendix by {H.} {L}in.
\newblock {\em J. reine angew. Math.}, 692:193--231, 2014.

\bibitem{Winter17abel}
W.~Winter.
\newblock {QDQ} vs. {UCT}.
\newblock In {\em Operator algebras and applications -- the Abel Symposium},
  volume~12, pages 327--348, 2017.

\end{thebibliography}

\end{document}